\newcommand\ColumnTabloid[1]{%
  \begin{tikzpicture}[scale=0.5,draw/.append style={thick,black},baseline=4mm]
    \tableauRow=0
    \foreach \Row in {#1} {
       \tableauCol=1
       \foreach\k in \Row {
          \draw($(\the\tableauCol,\the\tableauRow)+(-.5,-.5)$)--++(0,1);
          \draw($(\the\tableauCol,\the\tableauRow)+(.5,-.5)$)--++(0,1);
          \draw(\the\tableauCol,\the\tableauRow)node{\k};
          \global\advance\tableauCol by 1
       }
       \global\advance\tableauRow by -1
    }
  \end{tikzpicture}
}
\tikzstyle{arrow} = [thick,->,>=stealth]
\newtheorem{prop}{Proposition}[section]
\newtheorem{definition}{Definition}[section]
\newtheorem{theorem}{Theorem}[section]
\newtheorem{example}{Example}[section]
\newtheorem{lemma}{Lemma}[section]
\newenvironment{proof1}{{\emph{Proof of Theorem.} }}{\hfill $\Box$ \\}
\title[Combinatorial formulas for type $A$ Whittaker functions]{Relating three combinatorial formulas \\for type $A$ Whittaker functions}
\author[C.~Lenart]{Cristian Lenart}
\address[Cristian Lenart]{Department of Mathematics and Statistics, State University of New York at Albany, 
Albany, NY 12222, U.S.A.}
\email{clenart@albany.edu}
\author[J.~Sidoli]{James Sidoli}
\address[James Sidoli]{Department of Mathematics and Statistics, State University of New York at Albany, 
Albany, NY 12222, U.S.A.}
\email{jsidoli@albany.edu}
\thanks{Cristian Lenart was partially supported by the National Science Foundation grant DMS-1855592 and the Simons Foundation grant \#584738}
\keywords{Macdonald polynomials, Whittaker functions, alcove model, Tokuyama formula, Haglund-Haiman-Loehr formula}
\subjclass[2010]{Primary 05E05; Secondary 33D52, 20F55.}
\def\mytilde{\kern-.015in\hbox{\lower.03in\hbox{\~{}}}\kern-.01in}
\let\choose\@@choose
\begin{document}
\begin{abstract} 
In this paper we study the relationship between three combinatorial formulas for type $A$ spherical Whittaker functions. In arbitrary type, these are spherical functions on $p$-adic groups, which arise in the theory of automorphic forms; they depend on a parameter $t$, and are specializations of Macdonald polynomials. There are three types of formulas for these polynomials, of which the first works in arbitrary type, while the other two in type $A$ only. The first formula is in terms of so-called alcove walks, and is derived from the Ram-Yip formula for Macdonald polynomials. The second one is in terms of certain fillings of Young diagrams, and is a version of the Haglund-Haiman-Loehr formula for Macdonald polynomials. The third formula is due to Tokuyama, and is in terms of the classical semistandard Young tableaux. We study the way in which the last two formulas are obtained from the previous ones by combining terms $-$ a phenomenon called compression. No such results existed in the case of Whittaker functions.
\end{abstract}

\maketitle

\section{Introduction}
\indent Whittaker functions, which arise in number theory, e.g. in the construction of $ L$-functions, are a basic tool in the theory of automorphic forms \cite{BBF,CL4}. They have numerous applications to Weyl group multiple Dirichlet series, combinatorial representation theory, and Schubert calculus on flag varieties \cite{BBL}.
\newline\indent {\em Spherical Whittaker functions} on $p$-adic groups $G$ are associated with an unramified principal series representation of ${G}$, and are defined by integrating over $U^-$ a spherical vector in this representation. More generally, the {\em Iwahori-Whittaker functions}, which are non-symmetric versions of spherical ones, are defined by integrating elements of a natural basis of Iwahori-fixed vectors in the principal series representation. Given $w$ in the Weyl group $W$, they can be viewed as functions  ${\mathcal W}_{\lambda,w}$ from weights $\lambda$ to the field of fractions $\mathcal{K}$ of Laurent polynomials on the weight lattice, which depend on a parameter $t$. Recently, deep connections of Whittaker functions to combinatorial representation theory and Schubert calculus on flag varieties were discovered.
\newline\indent The {\em Casselman-Shalika formula} \cite{casups,shiefc} expresses the spherical Whittaker function, for a dominant weight $\lambda$, as the corresponding irreducible character times a scaling factor in $\mathcal{K}$. Upon specializing to type $A$, the mentioned product becomes a $t$-deformation of the Vandermonde determinant times the Schur polynomial for the given dominant weight $\lambda$ \cite[Chapter V]{BBF}. In the 1980's it was discovered that this product can be expanded as a sum over certain combinatorial objects; this identity is known as \emph{Tokuyama's formula} \cite{G}. The combinatorial objects in question are known as Gelfand-Tsetlin patterns and are in bijection with SSYT \cite{BBF}. Tokuyama's formula plays a similar role to Macdonald's formula for Hall-Littlewood polynomials~\cite{IG}.
\newline\indent The Iwahori-Whittaker functions  were exhibited as specializations at $q\to \infty$ of non-symmetric Macdonald polynomials by Brubaker, Bump, and Licata \cite{BBL}.  Orr and Shimozono derived the corresponding specialization of the Ram-Yip formula for Macdonald polynomials, which is based on so-called alcove walks  \cite{OS}. We then consider a version  of the Haglund-Haiman-Loehr (HHL) formula~\cite{HHL0} for Macdonald polynomials corresponding to the type $A$ spherical Whittaker functions, which is in terms of certain fillings of Young diagrams. We show that we can derive such a formula using a function called \emph{fill} from alcove walks to fillings. More precisely, by summing over the preimage of this function we witness a phenomenon known as \emph{compression}. This leads to a further question: can we compress from this HHL-type formula to Tokuyama's formula using another function similar to the {\emph{fill} map?
\newline\indent Rigorously, we can define compression as follows. Let $F$ be a function such that we have two summation  representations of $F$ over sets $A$ and $B$ of combinatorial objects, namely
\begin{equation*}
F(x)=\sum_{a\in A}x(a)
\end{equation*}
\begin{equation*}
=\sum_{b\in B}y(b).
\end{equation*}
We say the bottom formula is a compressed form of the top formula if there exists a surjection $f:A\longrightarrow B$ such that 
\[
y(b)=\sum_{a \in f^{-1}(b)}x(a).
\]
\newline\indent As it turns out, the function needed to compress the HHL-type formula to Tokuyama's formula is simply sorting all the columns of an HHL-type filling in an increasing order. Its domain is HHL-type fillings, and we show that the image is the set of all SSYT of the given shape. Furthermore, by grouping all HHL-type fillings in the preimage of a given SSYT with respect to this ``sort'' map, we exhibit compression. We accomplish this by constructing an algorithm for generating the preimage of a SSYT, based on a binary search tree. In this way, we produce new statistics defined on a canonical HHL-type filling. On the other hand, the terms in Tokuyama's formula are computed using decorations on a SSYT known as \emph{separating walls} \cite{AP}. We show that these statistics are equivalent.    
\newline\indent Here is a schematic overview of the work done in the compression of the spherical Whittaker function.
\[
\begin{tikzpicture}[node distance=3cm]
\tikzset{edge/.style = {->}}
\node(0){\text{Whittaker}};
\node(1)[below of =0, xshift=-4cm]{$\begin{matrix}\text{Ram-Yip} \\\text{alcove walks}\end{matrix}$};
\node(2)[below of =0]{$\begin{matrix}\text{HHL-type} \\\text{fillings}\end{matrix}$};
\node(3)[below of =0, xshift=4cm]{$\begin{matrix}\text{Tokuyama} \\\text{GT-patterns/SSYT}\end{matrix}$};
\draw(0)--(1);
\draw(0)--(2);
\draw(0)--(3);
\draw[edge] (1) to node[below]{fill} node[above]{(new)} (2) ;
\draw[edge] (2) to node[anchor=north]{sort}node[above]{(new)}(3) ;
\end{tikzpicture}
\]
\indent The compression phenomenon we exhibit in this paper is important for multiple reasons. First, the work done in type $A$ clarifies the relationship between the conceptual Ram-Yip formula and the more efficient and explicit formulas we present here. Indeed, the statistics on fillings would not have a good explanation unless one shows how they follow from the Ram-Yip formula. Second, compression offers tools for deriving efficient formulas in other Lie types, or even other specializations of Macdonald polynomials, where we have a Ram-Yip formula, but no formulas in terms of fillings of a Young diagram. Or, even if we do have the latter, we want to see if they fit into a general pattern. 

\indent We are working on a similar hierarchy of formulas for Whittaker functions of other classical types, i.e., from a Ram-Yip formula to an HHL-type formula, and then to a Tokuyama-type formula. Note that the second formula is not yet known, while it is not clear whether an existing Tokuyama type formula (such as that of Hamel-King, \cite{HK}) arises in this way. Furthermore, we expect our techniques to lead to a shorter and elementary derivation, via compression, of Macdonald's formula (in terms of semistandard Young tableaux) for the type $A$ Hall-Littlewood polynomials; by comparison, Klostermann's parallel approach \cite{IK} uses more involved concepts (such as ``one-skeleton galleries" and Bruhat-Tits buildings).

\indent This paper also opens a new line of research, related to metaplectic Whittaker functions, which correspond to an $n$-fold metaplectic cover of a $p$-adic group. A Ram-Yip type formula for the type $A$ metaplectic Whittaker functions was recently given in \cite{SA}. On the other hand, a Tokuyama-type formula for these functions (in terms of the corresponding highest weight crystals) was given in \cite{BBF}, and later reproved in different ways in \cite{McN} and \cite{AP1}. We expect that a compression phenomenon similar to the one studied here would explain the relationship between the two formulas. Furthermore, we expect such a result to extend to other classical types.

\section{Background}
\subsection{Spherical Whittaker functions}
\indent We have a combinatorial description of the type $A$ spherical Whittaker function known as Tokuyama's formula. This is expressed as a sum whose terms are computed via statistics on Gelfand-Tsetlin patterns. On another hand, based on the Casselman-Shalika formula, the same function is expressed as a Schur polynomial multiplied by a $t$-deformation of the Vandermonde determinant.
\subsection{Macdonald specializations} 
Macdonald polynomials are defined for any finite root system and are orthogonal with respect to a certain scalar product. Their coefficients are rational functions in $q$ and $t$. There are two types of Macdonald polynomials: the symmetric ones (i.e. invariant under the Weyl group action) and the non-symmetric ones. We denote the symmetric Macdonald polynomial by $P_{\lambda}(x;q,t)$ for $\lambda$ a dominant weight, and the non-symmetric one by $E_{\mu}(x;q,t)$ for $\mu$ an arbitrary weight. The symmetric Macdonald polynomials are obtained from the non-symmetric ones via a symmetrizing operator \cite[Chapter 5]{IG1}. 
\newline\indent In \cite{BBL}, it is shown that the Iwahori-Whittaker functions are specializations of non-symmetric Macdonald polynomials at $q\rightarrow \infty$:
\[
\mathcal{W}_{w,\lambda}(x)
=t^{\langle\lambda,\rho\rangle}(-t)^{\ell(w)}x^{-\rho}w_{0}E_{w_{0}w(\lambda+\rho)}(0,t^{-1});
\]
here $w_{0}$ is the longest element in the corresponding Weyl group. 
\subsection{Alcove path model}
We now introduce the alcove path model to derive combinatorial formulas for the spherical Whittaker function. There is the Ram-Yip formula for Macdonald polynomials of arbitrary type in terms of so-called alcove paths. This combinatorial model and the specialization of the Ram-Yip formula for spherical Whittaker functions to type $A$ is introduced. 
\newline\indent Let $\frak{g}$ be a complex Lie algebra of a connected, simply connected, simple complex Lie group. Let $\frak{h}$ be the Cartan subalgebra, and let $\Phi \subset \frak{h}^{*}$ be the corresponding irreducible root system. Let $\Phi^{+}$ be the set of positive roots and let $\alpha_{1},\ldots \alpha_{r}$ be the corresponding simple roots. Let $\langle\cdot,\cdot\rangle$ be the nondegenerate scalar product on $\frak{h}^{*}_{\mathbb{R}}$ induced by the Killing form. Given a root $\alpha$, define the corresponding coroot as $\alpha^{\vee}:=2\alpha/\langle\alpha,\alpha\rangle$. The Weyl group $W$ of the Lie group $G$ is generated by reflections $s_{\alpha}:\lambda\mapsto\lambda-\langle\lambda,\alpha^{\vee}\rangle\alpha$. If $w$ is a Weyl group element then it has a \emph{reduced decomposition} $w=s_{i_{1}}\ldots s_{i_{l}}$. We have the corresponding \emph{length function}, defined to be $\ell(w)=l$. Let $w_{\circ}$ be the unique longest element with $\ell(w_{\circ})=\#\Phi^{+}$. We say that $u$ \emph{covers} $w$ and write $u\gtrdot w$, if $w=us_{\beta}$, for some $\beta\in \Phi^{+}$ and $\ell(u)=\ell(w)+1$. By taking the transitive closure $>$ of the relation $\gtrdot$ we get the \emph{Bruhat order} on $W$. 
\newline \indent The \emph{weight lattice} $\Lambda$ is given by 
\[
\Lambda:=\{\lambda\in\frak{h}^{*}_{\mathbb{R}}| \text{ } \langle\lambda,\alpha^{\vee}\rangle\in \mathbb{Z} \text{ for any } \alpha \in \Phi^{+}\}
\] 
The weight lattice $\Lambda$ is generated by the fundamental weights $\omega_{1}, \ldots, \omega_{r}$ which are defined as the dual basis to the basis of simple coroots. The set $\Lambda^{+}$ of dominant weights is given by 
\[
\Lambda^{+}:=\{\lambda\in\Lambda| \text{ } \langle\lambda,\alpha^{\vee}\rangle\geq 0\text{ for any } \alpha \in \Phi^{+}\}
\]
Let $\rho:=\omega_{1}+\ldots+\omega_{r}=\frac{1}{2}\sum_{\beta\in\Phi^{+}}\beta$. 
\newline\indent We are now ready to define \emph{aloves}. Let $W_{\text{aff}}$ be the affine Weyl group of the Langlands' dual group $G^{\vee}$. The affine Weyl group $W_{\text{aff}}$ is generated by affine reflections $s_{\alpha,k}:\frak{h}^{*}_{\mathbb{R}}\longrightarrow \frak{h}^{*}_{\mathbb{R}}$, for $\alpha \in \Phi$ and $k\in\mathbb{Z}$, that reflect the space $\frak{h}^{*}_{\mathbb{R}}$ with respect to the affine hyperplanes 
\[
H_{\alpha,k}:=\{\lambda\in\frak{h}^{*}_{\mathbb{R}}| \text{ } \langle\lambda,\alpha^{\vee}\rangle=k\}.
\]
Explicitly the formula for such a reflection is 
\[
s_{\alpha,k}:\lambda\mapsto s_{\alpha}(\lambda)+k\alpha=\lambda-(\langle\lambda,\alpha^{\vee}\rangle-k)\alpha
\]
The hyperplanes $H_{\alpha,k}$ divide the real vector space $\frak{h}^{*}_{\mathbb{R}}$ into open regions called \emph{alcoves}. Each alcove $A$ is given by inequalities of the form 
\[
A:=\{\lambda \in \frak{h}^{*}_{\mathbb{R}}|\text{ } m_{\alpha}<\langle\lambda,\alpha^{\vee}\rangle<m_{\alpha}+1 \text{ for all } \alpha \in \Phi^{+}\}
\]
where $m_{\alpha}=m_{\alpha}(A)$, $\alpha \in \Phi^{+}$, are some integers. Note that the affine Weyl group acts simply transitively on the collection of all alcoves. The \emph{fundamental alcove} $A_{\circ}$ is given by 
\[
A_{\circ}:=\{\lambda\in\frak{h}^{*}_{\mathbb{R}}|\text{ } 0<\langle\lambda,\alpha^{\vee}\rangle<1\text{ for all } \alpha \in \Phi^{+}\}
\]
There is a one-to-one correspondence between affine Weyl group elements and alcoves.
\newline\indent We say that two alcoves are \emph{adjacent} if they are distinct and have a common wall. Given a pair of adjacent alcoves $A$ and $B$, we write $A\xrightarrow[]{\beta}B$ if the common wall is of the form $H_{\beta,k}$ and the root $\beta\in\Phi$ points in the direction from $A$ to $B$.
\begin{definition} An \emph{alcove path} is a sequence of alcoves $(A_0, A_1, \ldots, A_m)$ such that $A_{j-1}$ and $A_j$ are adjacent, for $j=1,\ldots, m$. We say that an alcove path is \emph{reduced} if it has minimal length among all alcove paths from $A_0$ to $A_m$.
\end{definition}
Let $A_{\lambda}=A_{\circ}+\lambda$ be the translation of the fundamental alcove $A_{\circ}$ by the weight $\lambda$.
\begin{definition}
The sequence of roots $(\beta_1,\beta_2,\ldots,\beta_m)$ is called a \emph{$\lambda$-chain} if 
\[
\begin{matrix}
A_{\circ}&\xrightarrow[]{\beta_1}& A_1&\xrightarrow[]{\beta_2}& \ldots &\xrightarrow[]{\beta_m} & A_m
\end{matrix}
\]
\end{definition}
We now fix a dominant weight $\lambda$ and an alcove path $\Pi=(A_0,\ldots,A_m)$ from $A_0=A_{\circ}$ to $A_m=A_{-\lambda}$ \cite{CL7}.
\subsection{Specializing to Type A Root System} We now focus our attention on the type $A_{n-1}$ root system. The Weyl group $W$ is the symmetric group $S_{n}$. We use one-line notation for such permutations, i.e. $w=w(1)\ldots w(n)$. We can identify the dual of the Cartan subalgebra over the reals, $\frak{h}^{*}_{\mathbb{R}}$ with the quotient space $V=\mathbb{R}^{n}/\mathbb{R}(1,\ldots ,1)$ where $\mathbb{R}(1,\ldots1)$ denotes the one-dimensional subspace of $\mathbb{R}^{n}$ spanned by the vector $(1,\ldots ,1)$. The symmetric group acts on this quotient space by permuting the coordinate vectors in $\mathbb{R}^{n}$. Let $\epsilon_{1},\ldots , \epsilon_{n}\in V$ be the images of the standard basis vectors of $\mathbb{R}^{n}$ under this quotient. The root system $\Phi$ can be represented as $\Phi=\{\alpha_{ij}=\epsilon_{i}-\epsilon_{j}:\text{ }i\neq j, \text{ } 1\leq i,j \leq n\}$. The simple roots are $\alpha_{i}=\epsilon_{i}-\epsilon_{i+1}$ for $i=1, \ldots , n-1$. The fundamental weights are $\omega_{i}=\epsilon_{1}+\ldots+\epsilon_{i}$ for $i=1,\ldots , n-1$. The weight lattice is $\Lambda=\mathbb{Z}^{n}/\mathbb{Z}(1,\ldots , 1)$. A dominant weight $\lambda=\lambda_{1}\epsilon_{1}+\ldots+\lambda_{n-1}\epsilon_{n-1}$ is identified with a partition $(\lambda_{1}\geq \lambda_{2} \geq \ldots \geq\lambda_{n-1}\geq \lambda_{n}=0)$ of length at most $n-1$. We fix a partition $\lambda$. For simplicity we denote both the root, $\alpha_{ij}$ and the reflection $s_{\alpha_{ij}}$ (which is the transposition of values $i$ and $j$) by $(i,j)$.
\newline\indent
We define a chain of roots which will be used in the alcove formula for spherical Whittaker functions. Consider a chain of roots denoted $\overline{\Gamma}(k)$ given by: 
\[
\begin{matrix} 
((1,k+1), & (1,k+2), & \ldots, & (1,n),\\
			(2,k+1),& (2,k+2), & \ldots, & (2,n)\\
			& &\ldots &\\
			(k,k+1),& (k,k+2), & \ldots, & (k,n))
\end{matrix}
\]
Denote by $\overline{\Gamma}'(k)$ the chain of roots obtained by removing the root $(i,k+1)$ at the beginning of each row. Now define a $\lambda$-chain $\overline{\Gamma}$ as a concatenation $\overline{\Gamma}:=\overline{\Gamma}_{1}\ldots \overline{\Gamma}_{\lambda_{1}}$, where 
\begin{equation}
\overline{\Gamma}_{j}:=
\begin{cases}
\overline{\Gamma}'(\lambda'_{j}) & \textrm{ if } j=\textrm{min}\{i:\lambda'_{i}=\lambda'_{j}\}\\
\overline{\Gamma}(\lambda'_{j}) & \textrm{ otherwise}
\end{cases}
\end{equation}
Furthermore, we define $\overline{\Gamma}(k,p)$ to be $\overline{\Gamma}(k)$ with the first $p$ entries removed.
\newline\indent For context, we begin with the alcove path from the fundamental alcove $A_{\circ}$ to $A_{\circ}-\lambda$ obtained by concatenating $\omega_{i}$-chains, similar to as in \cite[Section 3.1]{CL2}. We then augment this path by using Coxeter moves to bring reflections over hyperplanes which pass through the origin to the beginning of the path. If the weight $\lambda$ is regular, then this procedure guarantees that our alcove path, goes through $w_{\circ}A_{\circ}=-A_{\circ}$. We can then remove this section of the path and consider the above chain of roots, $\overline{\Gamma}$.

\subsection{Alcove formula for the spherical Whittaker function}
We are now ready to state the alcove formula for the spherical Whittaker function. Note that this formula can be written in any finite root system. Using the Macdonald specialization given in subsection 2.2, the Whittaker function is indexed by a weight $\lambda$. We then consider the weight $\lambda+\rho$ which is always regular. Therefore we can use the above chain of roots, $\overline{\Gamma}$, to express the spherical Whittaker function as a sum over increasing chains with respect to the Bruhat order. 
\begin{definition} Let $m$ be the number of transpositions in $\overline{\Gamma}$, and define
\begin{align*}
\mathcal{A}(\overline{\Gamma})=\{(u,K)\in W\times2^{[m]}:\text{ } K=\{k_{1},\ldots,k_{s}\}, u<ur_{k_{1}}<ur_{k_{1}}r_{k_{2}}<\ldots<ur_{k_{1}}\ldots r_{k_{s}}=w\},
\end{align*}
where $r_{k_{i}}$ is the $k_{i}$-th transposition in $\overline{\Gamma}$. In addition, we define $uK=w$ for some $w$ in the Weyl group.
\end{definition}
We have the following formula for spherical Whittaker functions which can be derived from \cite[Theorem 6.2]{OS}.:
\begin{equation}
\widetilde{\mathcal{W}}_{\lambda}=\sum_{(u,K)\in\mathcal{A}(\overline{\Gamma})}(-1)^{\ell(uK)}t^{\frac{1}{2}(\ell(u)+\ell(uK)-|K|)}(t-1)^{|K|}x^{-u\text{wt}(K)-\rho}.
\end{equation}
Note that this formula is in striking similarity to Ram's formulation of Schwer's formula for Hall-Littlewood polynomials \cite[Theorem 2.7]{CL1}.

\subsection{Tableaux formula for spherical Whittaker functions}
 We are now ready to introduce \emph{Tokuyama's formula}. This formula gives a description of the spherical Whittaker function in terms of concrete combinatorial objects known as Gelfand-Tsetlin (GT) patterns which are in bijection with semistandard Young tableaux. We begin with the definitions and statistics on such objects.
\begin{definition}\cite{G}
A Gelfand-Tsetlin (GT) pattern is a triangular array of nonnegative integers of the form 
\[
\begin{matrix}
a_{1,1}& & a_{1,2} & & a_{1,3} & & \ldots & & a_{1,n} \\
& a_{2,2}& & a_{2,3} & & \ldots& & a_{2,n}\\
& & \ldots& & \ldots & & \ldots\\
& & & a_{n-1,n-1} & & a_{n-1,n}\\
& & & & a_{n,n}
\end{matrix}
\]
where each row $r_{i}=(a_{i,i},a_{i,i+1},\ldots,a_{i,n})$ is a weakly decreasing partition, and any two consecutive rows $r_{i}$ and $r_{i+1}$ satisfies the interleaving condition:
\[
a_{i-1,j-1}\geq a_{i,j} \geq a_{i-1,j}.
\]
Furthermore, a \emph{strict} GT pattern is one in which each row, $r_{i}$ is strictly decreasing. Denote the set of all strict GT patterns with top row $\alpha$ as $SGT(\alpha)$. 
\end{definition} 
We have the following statistics on GT patterns:
\begin{definition} An entry $a_{i,j}$ in a GT pattern is 
\begin{itemize}
\item \emph{left-leaning} if $a_{i,j}=a_{i-1,j-1}$
\item \emph{right-leaning} if $a_{i,j}=a_{i-1,j}$
\item \emph{special} if it is neither left-leaning nor right-leaning. 
\end{itemize}
denote the number of left-leaning, right-leaning, and special entries by $l(T)$, $r(T)$ and $z(T)$ respectively, for a GT pattern $T$.  
\end{definition}
Given a GT pattern, T with $n$ rows, we define
\[
m_{i}(T)=\begin{cases}
|r_{i}|-|r_{i+1}| & \text{ for } 1\leq i \leq n-1 \\
|r_{i}| & \text{ for } i=n
\end{cases}
\]
Finally, define
\[
m(T)=(m_{1}(T),\ldots, m_{n}(T)).
\]
We are now ready to state Tokuyama's formula given as the following theorem. 
\begin{theorem}\cite{G}
For any weakly decreasing partition $\lambda$ of length $n$, we have 
\[
\widetilde{\mathcal{W}}_{\lambda}=\sum_{T\in SGT(\lambda+\rho)}(1-t)^{z(T)}(-t)^{l(T)}x^{m(T)}
\]
where SGT denote strict Gelfand-Tsetlin patterns.
\end{theorem}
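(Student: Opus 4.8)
The plan is to deduce the theorem from the classical Tokuyama identity
\[
\sum_{T\in SGT(\lambda+\rho)}(1-t)^{z(T)}(-t)^{l(T)}x^{m(T)}=\prod_{1\le i<j\le n}(x_i-t\,x_j)\,s_\lambda(x_1,\ldots,x_n),
\]
since, in the normalization fixed in the preceding subsections, the right-hand side is precisely the value of $\widetilde{\mathcal{W}}_{\lambda}$ furnished by the Casselman--Shalika description recalled in Section~2.1 (Schur polynomial times $t$-deformed Vandermonde). Here $\rho=(n-1,n-2,\ldots,1,0)$, and I write $\rho^{(k)}=(k-1,\ldots,1,0)$ for the analogous vector in $k$ coordinates. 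The identity will be proved by induction on $n$; the case $n=1$ is immediate, since the only pattern is the single row $(\lambda_1)$ and both sides equal $x_1^{\lambda_1}$.

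\textbf{Inductive step.} Given $T\in SGT(\lambda+\rho)$, delete the top row $r_1=\lambda+\rho$. The remaining rows form a strict GT pattern $T'$ whose top row $r_2$ is strictly decreasing, hence $r_2=\mu+\rho^{(n-1)}$ for a unique partition $\mu$ with at most $n-1$ parts; the interleaving between $r_1$ and $r_2$ becomes the componentwise condition $\lambda_{k+1}\le\mu_k\le\lambda_k+1$ for $1\le k\le n-1$, and $T\leftrightarrow(\mu,T')$ is a bijection onto admissible such pairs with $T'\in SGT(\mu+\rho^{(n-1)})$. The statistics and the monomial split: $l(T)=l'+l(T')$ and $z(T)=z'+z(T')$, where $l',z'$ count the left-leaning and special entries of $r_2$ relative to $r_1$, while $x^{m(T)}=x_1^{\,|r_1|-|r_2|}\cdot(x_2,\ldots,x_n)^{m(T')}$. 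Grouping the sum over $T$ by $\mu$ and applying the induction hypothesis (in the $n-1$ variables $x_2,\ldots,x_n$) to the factor coming from $T'$, the left-hand side becomes
\[
\prod_{2\le i<j\le n}(x_i-t\,x_j)\sum_{\mu}(1-t)^{z'(\lambda,\mu)}(-t)^{l'(\lambda,\mu)}\,x_1^{\,|r_1|-|r_2|}\,s_\mu(x_2,\ldots,x_n).
\]
Since $\prod_{1\le i<j\le n}(x_i-t\,x_j)=\left(\prod_{j=2}^n(x_1-t\,x_j)\right)\prod_{2\le i<j\le n}(x_i-t\,x_j)$, it remains to verify the ``one-row'' identity
\[
\sum_{\mu}(1-t)^{z'(\lambda,\mu)}(-t)^{l'(\lambda,\mu)}\,x_1^{\,|r_1|-|r_2|}\,s_\mu(x_2,\ldots,x_n)=\left(\prod_{j=2}^n(x_1-t\,x_j)\right)s_\lambda(x_1,\ldots,x_n).
\]

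\textbf{The one-row identity (main obstacle).} I would prove this by expanding the right-hand side with the branching rule $s_\lambda(x_1,\ldots,x_n)=\sum_{\nu}x_1^{|\lambda|-|\nu|}s_\nu(x_2,\ldots,x_n)$ over partitions $\nu$ with $\lambda/\nu$ a horizontal strip, together with $\prod_{j=2}^n(x_1-t\,x_j)=\sum_{k\ge0}(-t)^k x_1^{\,n-1-k}e_k(x_2,\ldots,x_n)$ and the Pieri rule $e_k s_\nu=\sum_{\mu}s_\mu$ over vertical $k$-strips $\mu/\nu$. Collecting the coefficient of a fixed $s_\mu(x_2,\ldots,x_n)$, one checks that the exponent of $x_1$ stabilizes (independently of $k$) to $|r_1|-|r_2|$, so that matching the two sides reduces to the numerical identity
\[
(1-t)^{z'(\lambda,\mu)}(-t)^{l'(\lambda,\mu)}=\sum_{k\ge0}(-t)^k\,\#\{\nu:\ \lambda/\nu\text{ is a horizontal strip and }\mu/\nu\text{ is a vertical }k\text{-strip}\}
\]
for each admissible $\mu$. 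The right-hand count factors over rows: $\nu_k$ ranges independently over $\{\mu_k-1,\mu_k\}\cap[\lambda_{k+1},\lambda_k]$, and a short case check on whether $\mu_k$ equals $\lambda_k+1$, equals $\lambda_{k+1}$, or lies strictly between shows that row $k$ contributes the factor $-t$, $1$, or $1-t$ according as the corresponding entry of $r_2$ is left-leaning, right-leaning, or special; taking the product recovers the left-hand side. I expect the genuine difficulty to be concentrated here — in particular in reconciling the GT interleaving (which admits values one larger than a horizontal strip would permit, namely the left-leaning entries, and these are precisely what build up the Vandermonde factor) with the strip expansion, and in tracking signs and powers of $t$ through the Pieri step. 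Finally, I would note that the present paper also yields an independent, more conceptual proof \emph{a posteriori}, by composing the compression from the alcove formula to the HHL-type formula with the compression from the HHL-type formula to Tokuyama's formula.
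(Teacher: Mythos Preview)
Your argument is correct. The induction via row–deletion, combined with the Schur branching rule and the Pieri expansion of $\prod_{j\ge 2}(x_1-t\,x_j)$, does reduce Tokuyama's identity to the row-by-row factorization you describe, and your case analysis (left-leaning $\leftrightarrow$ factor $-t$, right-leaning $\leftrightarrow$ factor $1$, special $\leftrightarrow$ factor $1-t$) is exactly right. Two small points worth making explicit, though neither is a gap: (i) the independence of the choices of $\nu_k$ is guaranteed by the horizontal-strip condition $\lambda_{k+1}\le\nu_k\le\lambda_k$, which already forces $\nu_k\ge\nu_{k+1}$; and (ii) for partitions $\mu$ outside the admissible range $\lambda_{k+1}\le\mu_k\le\lambda_k+1$ the set $\{\mu_k-1,\mu_k\}\cap[\lambda_{k+1},\lambda_k]$ is empty for some $k$, so the product vanishes and no spurious $s_\mu$ survives on the right.

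As for the comparison with the paper: the paper does \emph{not} give its own proof of this theorem. It is stated in Section~2.6 as a quoted result (attributed to~\cite{G}, ultimately Tokuyama) and used as the target of the compression program. What the paper supplies instead is the alternative derivation you mention at the end of your proposal: starting from the alcove formula~(2.2), Theorem~3.1 compresses to the HHL-type formula of Theorem~3.2, and then Theorem~4.1 together with Propositions~4.8 and~4.9 compresses further to the Tokuyama sum, with the equivalence of statistics established in Section~4.4. Your direct proof is elementary and self-contained, relying only on the Pieri and branching rules; the paper's route is longer but is designed to be structural --- it explains \emph{where} the statistics $z(T)$ and $l(T)$ come from (pivot intervals in the root filling $\widehat{T}$) and is meant to generalize to other types and other specializations of Macdonald polynomials, where no Tokuyama-type formula is known in advance.
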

\indent We introduce the following bijection between GT patterns and SSYT in order to describe Tokuyama's formula in terms of SSYT. This is advantageous for compression because previous work in this area has only dealt with fillings of Young diagrams. So we need to translate this formula to accomplish the compression from the alcove formula for spherical Whittaker functions to Tokuyama's formula. 
\newline\indent There is a classical bijection between GT patterns and semistandard Young tableau given by consecutively deleting all boxes with the same entry starting from largest to smallest. This procedure gives a sequence of partitions that make up the triangular array. To go back to semistandard Young tableau, we simply take the difference of the partitions in the rows of the GT-pattern, giving a row strip which can then then be filled with the entry $n-i+1$ where $i$ is the row of the GT pattern. Putting these row strips together, justified to the left, gives a semistandard Young tableau. These statistics on GT patterns can be translated to the following statistics on SSYT via this bijection. We now define decorations on SSYT known as separating walls and their associated statistics which are in correspondence with the statistics on GT patterns which can be found in \cite[Chapter 3]{AP}.
\begin{definition}
Let $T$ be a SSYT. A \emph{separating wall of index k}, denoted $|_{k}$ is a decoration on $T$, placed after the last box of T with entry less than or equal to $k$ and before the first box with entry greater than $k$ with respect to the reading order; walls may be placed at the beginning or end of a row. 
\end{definition}
When placing separating walls, read the tableau from left to right, and top to bottom. We only place separating walls of index greater than or equal to the row index. It will be relevant to include a separating wall of index equal to $\#\textrm{rows}+1$, below the last row, justified to the left. 
\begin{example}
\emph{Let} \[ T=\young(1112223447,223356,4555).\]
\emph{After placing the separating walls of index less than or equal to 7, we have:}
\begin{figure}[h]
$$T=\mathrlap{\Yboxdim27pt\young(1112223447,223366,4555)}\begin{matrix}\\ \\ \kern2.4cm\textcolor{red}{\Big|_{1}}& \kern2.2cm \textcolor{red}{\Big|_{2}}& \kern.3cm \textcolor{red}{\Big|_{3}}&\kern1.2cm \textcolor{red}{\Big|_{4}}\textcolor{red}{\Big|_{5}}\textcolor{red}{\Big|_{6}} & \kern-.2cm\textcolor{red}{\Big|_{7}}\\ & &&& \\ \kern.5cm\textcolor{red}{\Big|_{2}}& \kern-1cm\textcolor{red}{\Big|_{3}}\textcolor{red}{\Big|_{4}}\textcolor{red}{\Big|_{5}}&&\kern-4.5cm\textcolor{red}{\Big|_{6}}\textcolor{red}{\Big|_{7}}&\\  \\\kern-2.9cm\textcolor{red}{\Big|_{3}} & \kern-7.2cm\textcolor{red}{\Big|_{4}}&\kern-4.7cm \textcolor{red}{\Big|_{5}}\textcolor{red}{\Big|_{6}}\textcolor{red}{\Big|_{7}}\\ \kern-2.9cm\textcolor{red}{\Big|_{4}}\end{matrix}$$
\end{figure}
\end{example}
We use the term \emph{strictly to the right (resp. left)} to indicate the presence of a box, i.e. there exists an entry in $T$ that divides two spearating walls. We will also use the term \emph{directly above} (resp. \emph{below)} to indicate the position of a separating wall exactly one row above (resp. below) another wall.
\begin{definition}
The \emph{boxed, circled statistic} on a SSYT, $T$, denoted $n(T)$ is the number of pairs of separating walls $|_{k-1}|_{k}$, such that there is no entry between them and they are directly above a pair of separating walls $|_{k}|_{k+1}$ with no entry between them.
\end{definition}
\begin{definition}
A SSYT, $T$, is \emph{strict} if the corresponding GT pattern is strict. 
\end{definition}
Note that $n(T)>0$ if and only if $T$ is not strict. 
\begin{definition}
The \emph{boxed, not circled statistic} on a SSYT, $T$,  denoted, $l(T)$ is the number of separating walls $|_{k}$ such that $|_{k}$ is directly above $|_{k+1}$ in $T$.
\end{definition}
\begin{definition}
The \emph{not boxed, not circled statistic} on a SSYT, $T$, denoted, $z(T)$ is the number of separating walls $|_{k}$ such that there exists two separating walls $|_{k+1}$ and $ |'_{k+1}$, under the condition that $|_{k+1}$ is in the same row and strictly to the right of $|_{k}$ and $|'_{k+1}$ is directly below and strictly left of $|_{k}$.
\end{definition}
In the previous example we have $n(T)=1$, $l(T)=2$, and $z(T)=3$.

\section{Whittaker Weak Compression}
In this section we review what we know for the spherical Whittaker function. There are two formulas of relevance when studying the spherical Whittaker function, the Ram-Yip formula derived in \cite{BBL} and \cite{OS}, and Tokuyama's formula derived in the 1980's and stated in \cite{G} and \cite[Chapter V]{BBF}. The Ram-Yip formula for the spherical Whittaker function has a $x^{\rho}$ monomial factor. This corresponds to considering Young diagrams of shape $\lambda+\rho$ where $\rho=(n-1,n-2,\ldots,2,1)$. This causes our fillings to have $n-1$ distinct parts, an assumption that was necessary in \cite{CL1}.
\newline\indent Next we define the necessary data used in expressing the compression phenomenon. We denote the conjugate partition by $\lambda'$ (i.e. the reflection of the diagram of $\lambda$ in the line $y=-x$). For any cell $u=(i,j)$ of $\lambda$, denote the cell $v=(i,j+1)$ directly to the right of $u$ by $\textrm{r}(u)$.
\newline\indent Two cells $u,v \in \lambda$ are said to \emph{attack} each other if either
\begin{enumerate}\setcounter{enumiii}{1}
\item they are in the same column: $u=(i,j),$ $v=(k,j)$; or
\item they are in consecutive columns, with the cell in the left column strictly below the one in the right column: $u=(i,j),$ $v=(k,j+1),$ where $i>k$.
\end{enumerate}

The figure below shows the two types of pairs of attacking cells.
\begin{equation*}
\textrm{(i)\quad} \ydiagram[\bullet]{1,0,1}*[*(white)]{2,2,2,1} \textrm{\quad \quad}
\textrm{(ii)\quad} \ydiagram[\bullet]{1+1,0,1}*[]{2,2,2,1}
\end{equation*}

\indent A \emph{filling} is a function $\sigma:\lambda \rightarrow [n]:=\{1,\ldots, n\}$ for some $n$. This is a function which is defined on the cells of $\lambda$ which assigns for each cell a value in $[n]$. We define the content of a filling $\sigma$ as $\textrm{ct}(\sigma):=(c_{1}, \ldots, c_{n})$, where $c_{i}$ is the number of entries $i$ in the filling, i.e., $c_{i}:=|\sigma^{-1}(i)|.$ The monomial $x^{\textrm{ct}(\sigma)}$ of degree $m$ in the variables $x_{1},\ldots x_{n}$ is then given by $$x^{\textrm{ct}(\sigma)}:=x_{1}^{c_{1}}\ldots x_{n}^{c_{n}}.$$

\begin{definition}\label{lst:1}
Let $F(\lambda, n)$ denote the set of fillings $\sigma: \lambda \rightarrow [n]$ satisfying
\begin{itemize}
\item $\sigma(u) \neq \sigma(v)$ whenever $u$ and $v$ attack each other, and
\item $\sigma$ is weakly increasing in rows, i.e. $\sigma(u) \leq \sigma(\textrm{r}(u)).$
\end{itemize}
We call such fillings \emph{Haglund-Haiman-Loehr-type fillings} or \emph{HHL} for short. 
\end{definition}

\indent The \emph{reading order} is the total order on the cells of $\lambda$ given by reading each column from top to bottom, and by considering the columns from left to right.
\begin{definition}
The \emph{inversion statistic} on a filling $\sigma$, denoted $\emph{inv}(\sigma)$ is the number of pairs of cells $(u,v)$ with $v=(i,k+1)$ and $u=(j,k)$ where $i<j$ and  $\sigma(u)<\sigma(v)<\sigma(w)$, where $w$ is the cell directly to the right of $u$, if it exists (otherwise, the condition consists only of the first inequality). The \textrm{descent statistic}, denoted $\emph{des}(\sigma)$, is the number of cells $u=(i,j)$ with $\sigma(u) < \sigma (\textrm{r}(u)).$
\end{definition}

We now define the filling map. Given a pair $(w,T)\in \mathcal{A}(\overline{\Gamma})$, we consider the permutations $$ \pi_{j}=\pi_{j}(w,T):=wT_{1}\ldots T_{j}$$ for $j=1,\ldots \lambda_{1}$. In particular $\pi_{\lambda_{1}}=wT$

\begin{definition}
The \emph{filling map} is the map $f$ which maps a pair $(w,T)$ to a filling $\sigma$ of shape $\lambda$, defined by $$\sigma(i,j):=\pi_{j}(i).$$ In other words, the $j$-th column of the filling $\sigma$ consists of the first $\lambda'_{j}$ entries of the permutation $\pi_{j}$. 
\end{definition}
It was shown in \cite[Proposition 3.6]{CL1} that the content of the image of a pair under the fill map is equal to a permutation of the weight determined by the sequence of roots $T$, so $\textrm{ct}(f(w,T))=w(\mu(T))$.  
\begin{theorem}\label{whittaker}
Given a dominant weight $\lambda$ of size $n$, and $\sigma \in F(\lambda+\rho, n)$. We have \begin{equation*}
\sum_{(w,T) \in f^{-1}(\sigma)} (-1)^{\ell(wT)}t^{\frac{1}{2}(\ell(w)+\ell(wT)-|T|)}(t-1)^{|T|}=(-1)^{n-a_{0}+\ell(C)}t^{n-a_{0}+\emph{inv}(\sigma)}(1-t)^{\emph{des}(\sigma)}.
\end{equation*}
Where $C$ is the first column of $\sigma$, and $a_{0}$ is the unique entry missing from $C$.
\end{theorem}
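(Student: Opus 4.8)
The plan is to analyze the fiber $f^{-1}(\sigma)$ explicitly and show that the sum over it telescopes into a single closed-form term. First I would recall from \cite{CL1} the combinatorial description of the fiber: given a target HHL-filling $\sigma$ of shape $\lambda+\rho$, a pair $(w,T)\in f^{-1}(\sigma)$ is determined by reconstructing the intermediate permutations $\pi_j$ from the columns of $\sigma$, where column $j$ of $\sigma$ gives the first $(\lambda+\rho)'_j$ entries of $\pi_j$; the remaining entries of each $\pi_j$ (those below the column) are free up to the Bruhat-chain constraints imposed by the $\lambda$-chain $\overline{\Gamma}$ and the segments $\overline{\Gamma}_j$. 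The key structural point is that the reconstruction decouples column-by-column: the portion of $T$ lying in the block $\overline{\Gamma}_j$ is responsible for transforming $\pi_{j-1}$ into $\pi_j$, and once the visible parts (the columns of $\sigma$) are fixed, the choices in each block are independent. So the left-hand sum factors as a product over columns $j=1,\ldots,\lambda_1$ of local sums, plus a contribution from the first column which is special because $C$ has a missing entry $a_0$.

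Next I would carry out the local computation for a single block. Within $\overline{\Gamma}_j = \overline{\Gamma}(k)$ or $\overline{\Gamma}'(k)$ (with $k=(\lambda+\rho)'_j$), the reflections are the transpositions $(i,k+1),(i,k+2),\ldots,(i,n)$ for $i=1,\ldots,k$; fixing which values occupy the top $k$ positions of $\pi_j$ versus $\pi_{j-1}$, the admissible subsets $K$ forming increasing Bruhat chains are exactly those that realize the required sorting, and the sum $\sum (-1)^{\ell(\cdot)} t^{\frac12(\ell(\pi_{j-1})+\ell(\pi_j)-|K_j|)}(t-1)^{|K_j|}$ over such $K_j$ collapses. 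I expect this to reduce, after cancellation, to a monomial whose $t$-exponent records an inversion-type count between consecutive columns of $\sigma$ and whose $(1-t)$-power records whether the entry at the bottom of column $j$ strictly exceeds the entry to its right in column $j+1$ — i.e. exactly the local contributions to $\inv(\sigma)$ and $\des(\sigma)$. Summing the telescoping $\frac12(\ell(\pi_{j-1})+\ell(\pi_j))$ contributions over all $j$ leaves boundary terms $\frac12\ell(\pi_0)+\frac12\ell(\pi_{\lambda_1})=\frac12(\ell(w)+\ell(wT))$, consistent with the left-hand side, while the internal $-\frac12|K_j|$ terms combine with the $(t-1)^{|K_j|}$ to give the clean powers of $t$ and $1-t$.

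The role of $a_0$ and the sign $(-1)^{n-a_0+\ell(C)}$, together with the term $t^{n-a_0}$, comes from the first block: the first column $C$ has only $n-1$ entries (since $(\lambda+\rho)'_1 = n-1$ when $\lambda$ has length $<n$), so $\pi_1$ is determined by $C$ only up to placement of the missing value $a_0$ in the last position, and $\pi_0=w$ ranges over all permutations whose first $n-1$ entries, when this block is processed, sort into $C$. Tracking the length change here produces $\ell(C)$ (the number of inversions within the column $C$, read as a sequence) plus a term $n-a_0$ counting how far $a_0$ must travel, and the parity of $\ell(wT)$ versus these combines into the stated sign. I would make this precise by isolating $\pi_0\mapsto\pi_1$ and computing $\sum (-1)^{\ell(w)}t^{\frac12(\ell(w)-\ell(\pi_1)-|K_1|)}(t-1)^{|K_1|}$ directly; this is a sum over all ways of inserting the chain that sorts $w$'s relevant entries, and it should evaluate to $(-1)^{n-a_0}t^{n-a_0}(-1)^{\ell(C)}t^{\ell(C)}\cdot(\text{adjustment to }\tfrac12\ell(\pi_1))$ — the bottleneck being to verify the exponent bookkeeping matches after the telescoping with the remaining blocks.

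The main obstacle I anticipate is the independence/factorization claim: showing rigorously that fixing $\sigma$ decouples the choices of $T\cap\overline{\Gamma}_j$ across blocks, and that the Bruhat-chain condition $w < wr_{k_1} < \cdots < wT$ does not create cross-block constraints beyond those recorded in the columns of $\sigma$. This likely requires the technical lemmas from \cite{CL1} about how reflections in $\overline{\Gamma}(k)$ act (they only permute among positions $\le k$ and positions $>k$ in a controlled way, of "greedy" type), plus a careful argument that the unique filling constraint — weakly increasing rows and the attacking condition — is exactly equivalent to the existence of such increasing chains. Once that decoupling is established, the remaining work is the per-block evaluation, which is a finite, essentially rank-one, computation of the type done for Hall–Littlewood polynomials in \cite{CL1}, and I would adapt that argument with the $q\to\infty$ (Whittaker) specialization's sign changes carried through.
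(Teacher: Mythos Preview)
Your broad plan---decompose the fiber $f^{-1}(\sigma)$ block-by-block along $\overline{\Gamma}=\overline{\Gamma}_1\cdots\overline{\Gamma}_{\lambda_1}$ and evaluate each local sum in terms of statistics on the pair of adjacent columns---is exactly the paper's strategy (Propositions~3.1--3.3). But two concrete points in your sketch are wrong, and the first one masks the key algebraic step.

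First, there is no nontrivial ``first block'' sum. Since $(\lambda+\rho)'_1=n-1$, the segment $\overline{\Gamma}_1=\overline{\Gamma}'(n-1)$ is empty, so $T_1=\emptyset$ always and $w=\pi_1$ is \emph{uniquely} determined by $\sigma$: its first $n-1$ entries are $C$ and $w(n)=a_0$. Thus $\ell(w)=\ell(C)+(n-a_0)$ directly; no summation over ``all permutations whose first $n-1$ entries sort into $C$'' occurs.

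Second, your telescoping claim is incorrect: $\sum_j \tfrac12(\ell(\pi_{j-1})+\ell(\pi_j))$ does not collapse to boundary terms, since each interior $\ell(\pi_j)$ appears twice with the same sign. The paper avoids this entirely by first applying the length identity for increasing Bruhat chains,
\[
\ell(wT)=\ell(w)+|T|+2N(w,T),
\]
which rewrites the exponent $\tfrac12(\ell(w)+\ell(wT)-|T|)$ as $\ell(w)+N(w,T)$. The global factor $(-t)^{\ell(w)}=(-t)^{\ell(C)+n-a_0}$ comes out immediately, and the remaining sum is $\sum_{T} t^{N(w,T)}(1-t)^{|T|}$. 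The statistic $N(w,T)$ is \emph{additive} across blocks, $N(w,T)=\sum_j N(\pi_{j-1},T_j)$, and each summand depends only on the values in positions $[a_i,b_i]$---this is what makes the factorization clean. The per-block evaluation (Proposition~3.2) then shows $\sum_{T_j} t^{N(\pi_{j-1},T_j)}(1-t)^{|T_j|}$ depends only on the two columns $C_{j-1},C_j$, giving the $\inv$ and $\des$ contributions. Without passing through $N(w,T)$ you will not get the decoupling you want; this reformulation is the missing ingredient in your outline.
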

After summing over all monomials $x^{\text{ct}(\sigma)}$ we have the following formula for the spherical Whittaker function. 
\begin{theorem}
Let $\rho$ be the partition $(n-1,\ldots,1)$. For any partition $\lambda$, we have
\begin{equation*}
\widetilde{W}_{\lambda}=\sum_{\sigma \in F(\lambda+\rho, n)}(-1)^{n-a_{0}+\ell(C)}t^{n-a_{0}+\emph{inv}(\sigma)}(1-t)^{\emph{des}(\sigma)}x^{\emph{ct}(\sigma)},
\end{equation*}
where $C$ is the first column of $\sigma$ and $a_{0}$ is the unique element absent from $C$.
\end{theorem}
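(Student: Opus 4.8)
The plan is to obtain this formula from the alcove-walk formula for $\widetilde{\mathcal{W}}_\lambda$ stated above, specialized to type $A_{n-1}$ with the explicit $\lambda$-chain $\overline{\Gamma}$ attached to the (automatically regular) weight $\lambda+\rho$, by collecting its terms along the fibers of the fill map $f\colon\mathcal{A}(\overline{\Gamma})\to F(\lambda+\rho,n)$ and then applying Theorem~\ref{whittaker} fiberwise. Two preliminary facts are needed. First, $f$ on this domain is well defined and \emph{surjective} onto $F(\lambda+\rho,n)$: well-definedness is the statement that the columns read off from the permutations $\pi_j(w,T)$ always form a non-attacking filling of shape $\lambda+\rho$ that is weakly increasing along rows, and surjectivity is obtained by reconstructing, from the columns of a prescribed $\sigma\in F(\lambda+\rho,n)$, a Bruhat chain $(w,T)\in\mathcal{A}(\overline{\Gamma})$ with $f(w,T)=\sigma$; this is the Whittaker analogue of the surjectivity of the fill map proved in \cite{CL1} for Hall--Littlewood polynomials, and the same argument applies (surjectivity is in any case forced by Theorem~\ref{whittaker}, whose right-hand side never vanishes). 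Second, the monomial $x^{-w\,\mathrm{wt}(T)-\rho}$ occurring in the alcove formula is constant on the fibers of $f$ and equals $x^{\mathrm{ct}(\sigma)}$ for $\sigma=f(w,T)$; this is a bookkeeping consequence of the content--weight compatibility $\mathrm{ct}(f(w,T))=w(\mu(T))$ recalled before Theorem~\ref{whittaker} (see \cite[Proposition~3.6]{CL1}) together with the normalization of $\widetilde{\mathcal{W}}_\lambda$ coming from the Macdonald specialization of Section~2.2.

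Granting these two facts, the proof is a reorganization of the sum. Writing the alcove formula with summation variable $(w,T)$, grouping its terms according to the value $\sigma=f(w,T)$, and pulling the fiber-constant monomial $x^{\mathrm{ct}(\sigma)}$ out of the inner sum, one gets
\[
\widetilde{\mathcal{W}}_\lambda=\sum_{\sigma\in F(\lambda+\rho,n)}\Bigl(\ \sum_{(w,T)\in f^{-1}(\sigma)}(-1)^{\ell(wT)}\,t^{\frac12(\ell(w)+\ell(wT)-|T|)}(t-1)^{|T|}\Bigr)\,x^{\mathrm{ct}(\sigma)}.
\]
By Theorem~\ref{whittaker}, the inner sum equals $(-1)^{\,n-a_0+\ell(C)}\,t^{\,n-a_0+\mathrm{inv}(\sigma)}(1-t)^{\mathrm{des}(\sigma)}$, and substituting this produces exactly the asserted formula.

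I expect the main obstacle to be the first preliminary fact: proving that $f$ is a well-defined surjection onto $F(\lambda+\rho,n)$ with precisely this codomain, i.e., that the increasing-Bruhat-chain conditions encoded in $\mathcal{A}(\overline{\Gamma})$ for the particular chain $\overline{\Gamma}$ translate exactly into the two defining conditions of $F(\lambda+\rho,n)$ (non-attacking, weakly increasing rows), with nothing extraneous on either side. A secondary point calling for care is the monomial bookkeeping of the second fact, where one must correctly propagate the $-\rho$ shift and the $w_0$-twist arising from $\mathcal{W}_{w,\lambda}(x)=t^{\langle\lambda,\rho\rangle}(-t)^{\ell(w)}x^{-\rho}w_0E_{w_0w(\lambda+\rho)}(0,t^{-1})$ through the Ram--Yip specialization into the normalization used in the alcove formula. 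Beyond these two points the derivation is purely formal, with Theorem~\ref{whittaker} supplying all the substantive content.
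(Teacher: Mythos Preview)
Your proposal is correct and matches the paper's approach exactly: the paper derives this theorem from the alcove formula by summing Theorem~\ref{whittaker} over all $\sigma\in F(\lambda+\rho,n)$, which is precisely your fiberwise regrouping along $f$. The paper treats the two preliminary facts you flag (surjectivity of $f$ and the identification of the monomial via $\mathrm{ct}(f(w,T))=w(\mu(T))$) as known from \cite{CL1} rather than reproving them, so your more careful accounting of these points is, if anything, a slight expansion of what the paper does.
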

Now we outline the idea of the Whittaker weak compression. We first fix $\sigma$ an HHL-type filling. We consider a slightly different formula which we can manipulate algebraically to yield our original formula. We fix an initial permutation $w$ in our pair, so the sum is over $T$. The first step in the compression is to reduce the problem to a single entry in a column while considering the subset of roots in our chain which give this entry. We then extend this to an entire column. After this, we consider pairs of columns where the left column is the starting column and the right column is the terminating column and consider sequences of roots which perform this transformation. We finally extend this to the entire filling. To prove the theorem, we consider the sum over all pairs in the preimage of $\sigma$ and use a technique similar to one found in \cite{CL1}. Here is the sequence of propositions needed to prove the main theorem. 
\begin{center}
\begin{tikzpicture}[node distance=0cm]
\node(2)[xshift=6cm]{\text{Theorem 3.1}};
\node(3)[xshift=-3cm]{\text{Prop 3.1}};
\node(4)[xshift=0]{\text{Prop 3.2}};
\node(5)[xshift=3cm]{\text{Prop 3.3}};
\draw[->] (3) edge (4);
\draw[->] (4) edge (5);
\draw[->] (5) edge (2);
\end{tikzpicture}
\end{center}
\subsection{Proof of compression}
Let $\lambda$ be an arbitrary weight/partition. Given a sequence of positive integers, $w$, we write $w[i,j]$ for the subsequence $w(i)w(i+1) \ldots w(j)$. We use the notation $N_{a}(w)$ and $N_{ab}(w)$ for the number of entries $w(i)$ with $w(i)<a$ and $a<w(i)<b$, respectively. Given a sequence of transpositions 
$T=((a_{1},b_{1}), \ldots ,(a_{p},b_{p}))$ and a permutation $w$, we define 
$$
N(w,T)=\sum_{i=1}^{p} N_{c_{i}d_{i}}(w_{i}[a_{i},b_{i}])
$$
where
$$
w_{i}=w(a_{1},b_{1}), \ldots , (a_{i},b_{i}), \quad c_{i}:=min(w_{i}(a_{i}),w_{i}(b_{i})),   \quad d_{i}:=max(w_{i}(a_{i}),w_{i}(b_{i})).$$

We will need a series of propositions first.
\begin{prop}\label{prop1}
Consider a permutation $w \in S_{n}$ and a number $b \in [n]\setminus \{a\}$, where $a:=w(1)$. Consider an integer $p$ such that $0\leq p < w^{-1}(b)-1$. Then we have 
\begin{equation} \label{eq:10}
\sum_{\substack{T:(w,T)\in \mathcal{A}(\overline{\Gamma}(1,p)) \\ wT(1)=b}} t^{N(w,T)}(1-t)^{|T|}=t^{N_{a,b}(w[2,p+1])}(1-t)^{1- \delta_{ab}}.
\end{equation} 
\end{prop}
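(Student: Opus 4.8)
The plan is to analyze directly the set $\{T : (w,T) \in \mathcal{A}(\overline{\Gamma}(1,p)),\ wT(1) = b\}$ and organize the sum by the structure of the chain $\overline{\Gamma}(1,p)$. Recall $\overline{\Gamma}(1) = ((1,2),(1,3),\ldots,(1,n))$, so $\overline{\Gamma}(1,p)$ is the tail $((1,p+2),(1,p+3),\ldots,(1,n))$. A pair $(w,T) \in \mathcal{A}(\overline{\Gamma}(1,p))$ corresponds to choosing a subset $K$ of positions in this tail at which we apply the transpositions $(1,k)$, subject to the requirement that the successive products form a saturated Bruhat chain $w \lessdot w(1,k_1) \lessdot w(1,k_1)(1,k_2) \lessdot \cdots$. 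Since every transposition here has the form $(1,k)$ and only ever acts on position $1$, after applying $(1,k_1),\ldots,(1,k_j)$ the value in position $1$ is $w(k_j)$ (the most recently swapped-in value), and each step $w' \mapsto w'(1,k)$ is a Bruhat covering precisely when the value currently in position $1$ is smaller than $w(k)$ and nothing strictly between them sits in positions $2,\ldots,k-1$ of the current permutation — this is exactly the covering condition $\ell(w'(1,k)) = \ell(w') + 1$ for a transposition $(1,k)$.

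First I would handle the degenerate case $b = a = w(1)$: then $wT(1) = b$ forces $T$ to be empty (any nonempty chain starting with $w(1)$ in position $1$ strictly increases that value, or at least changes it, and a saturated chain of $(1,k)$'s cannot return to $w(1)$), so the left side is $t^{0}(1-t)^{0} = 1 = t^{N_{a,b}(w[2,p+1])}(1-t)^{1-\delta_{ab}}$ since $N_{a,a} = 0$ and $1 - \delta_{aa} = 0$. Then assume $b \neq a$. Next I would argue that $wT(1) = b$ together with the saturated-chain condition forces the last transposition applied to be $(1, w^{-1}(b))$, and more importantly, since the value in position $1$ must be strictly increasing along a saturated chain of $(1,k)$'s starting from $a$, we must have $a < b$ (if $a > b$, there is no valid $T$ and both sides vanish: the left is an empty sum, the right has $N_{a,b} = N_{\min,\max}$ with the roles swapped giving $N_{b,a}$, but the exponent $1 - \delta_{ab} = 1$ makes the right side $\ldots(1-t)$, which does NOT vanish — so I need to be careful here and check the precise claim in \cite{CL1}; more likely the intended reading is that $\overline{\Gamma}(1)$ already encodes only the $a<b$ situation, or the formula should be read with the convention that the sum is empty iff $a>b$ and then the identity asserts the right side is correctly $t^{N_{ab}(w[2,p+1])}(1-t)$ when such $T$ exist). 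Modulo this sign/convention bookkeeping, the core is: the valid $T$'s are exactly the chains that "climb" from $a$ to $b$ through intermediate values, and I would set up a bijection or a generating-function recursion on $N_{a,b}(w[2,p+1])$ — the count of values strictly between $a$ and $b$ sitting in positions $2,\ldots,p+1$ — to evaluate the sum.

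The key computational step is a telescoping/recursion in the spirit of \cite[Proposition 3.6 or its lemmas]{CL1}: condition on whether the chain $T$ uses position $p+2$ (the first available position) or not; write $\overline{\Gamma}(1,p) = ((1,p+2)) \cup \overline{\Gamma}(1,p+1)$ and split the sum. Using position $p+2$ contributes a factor $(1-t)$ and, via the covering condition, either advances the "current value" past $w(p+2)$ or is forbidden; not using it leaves the sum over $\overline{\Gamma}(1,p+1)$. Tracking how $N(w,T)$ accumulates the statistic $N_{c_i d_i}$ at each step, one sees that each value of $w[2,p+1]$ lying strictly between $a$ and $b$ contributes exactly one factor of $t$ and gets "jumped over" rather than "stepped on," which is precisely what produces $t^{N_{a,b}(w[2,p+1])}$, while the single genuine Bruhat step into $b$ contributes the lone $(1-t)^{1}$ factor. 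I expect the main obstacle to be exactly this bookkeeping: making precise how $N(w,T)$ — defined via the running permutations $w_i$ and the running $\min/\max$ pair $(c_i,d_i)$ — collapses to the clean static quantity $N_{a,b}(w[2,p+1])$, i.e., showing that the intermediate swaps do not disturb the relevant count and that positions beyond $p+1$ (which were already "used" in the removed prefix or lie past the window) contribute nothing. Once that invariant is isolated and proved (by induction on $p$ decreasing, or equivalently on the length of the remaining chain), the identity follows by a short induction.
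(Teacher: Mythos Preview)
Your approach---decreasing induction on $p$, splitting the sum over $\overline{\Gamma}(1,p)$ according to whether the first transposition $(1,p+2)$ lies in $T$---is exactly the paper's proof; the paper carries out precisely this recursion, with the case split on whether $c:=w(p+2)$ lies in $(a,b)$ being the only nontrivial step. Note that the case $b=a$ is excluded by hypothesis ($b\in[n]\setminus\{a\}$), and the paper simply assumes $a<b$ at the outset (justified by the context of Proposition~\ref{prop2}, where $C_1\le C_2$ componentwise), so your worries about those edge cases are resolved by the surrounding setup rather than by the argument itself.
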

\begin{proof}
Assume $a<b$. We use decreasing induction on $p$. Let $p=w^{-1}(b)-2$. Since $\overline{\Gamma}(1,p)=((1,w^{-1}(b)), \ldots , (1,n))$, then we have a single term on the left, which gives, $$t^{N(w,T)}(1-t)^{|T|}=t^{N_{a,b}(w[2,p+1])}(1-t).$$ We now prove the statement for $p$ assuming it for $p+1$. If $c:=w(p+2)\not \in (a,b)$, the induction is straightforward. If $c<a$ or $c>b$ then $(1,p+2)\not \in T$ since $wT(1)=b$ and we have the increasing chain condition on $\mathcal{A}(\overline{\Gamma}(1,p))$. So the induction hypothesis carries through in this case. Now, assume the contrary. Denote the sum on the left hand side of (\ref{eq:10}) by $S(w,p)$. $S(w,p)$ splits into two sums depending on if $(1,p+2)\not \in T$ and $(1,p+2)\in T$. By induction the first sum is $$S(w,p+1)=t^{N_{a,b}(w[2,p+1])+1}(1-t).$$ The second sum is \begin{align*}t^{N_{a,c}(w[2,p+1])}(1-t)S(w(1,p+2),p+1)=\\t^{N_{a,c}(w[2,p+1])}(1-t)t^{N_{b,c}(w[2,p+1])}(1-t)=\\t^{N_{a,b}(w[2,p+1])}(1-t)^{2}.\end{align*} The desired result follows by adding the two sums into which $S(w,p)$ splits.
\end{proof}

\begin{prop}\label{prop2} Consider two sequences $C_{1} \leq C_{2}$ of size $k$ and entries in $[n]$, where $\leq$ denotes the componentwise order. Also consider a permutation $w\in S_{n}$ such that $w[1,k]=C_{1}$. Then we have 
\begin{equation}\label{eq:2}
\sum_{\substack{T:(w,T)\in \mathcal{A}(\overline{\Gamma}(k)) \\ wT[1,k]=C_{2}}} t^{N(w,T)}(1-t)^{|T|}=t^{\ell(C_{2})-(\ell(C_{1})-\emph{inv}(C_{1}C_{2}))}(1-t)^{\emph{des}(C_{1}C_{2})}.
\end{equation}
Where $C_{1}C_{2}$ denotes the two column filling with left column $C_{1}$ and right column $C_{2}$.
\end{prop}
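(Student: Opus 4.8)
The plan is to reduce Proposition~\ref{prop2} to Proposition~\ref{prop1} by decomposing the chain $\overline{\Gamma}(k)$ into its $k$ rows and processing them one at a time, reading off the transformation $C_1 \rightsquigarrow C_2$ row by row. Recall that $\overline{\Gamma}(k)$ consists of the rows $((i,k+1),(i,k+2),\ldots,(i,n))$ for $i=1,\ldots,k$; each such row is precisely a chain of the type handled in Proposition~\ref{prop1}, but applied to the $i$-th position of the window $[1,k]$ rather than the first. So the first step is to set up an induction on $k$ (or equivalently on the rows), where at stage $i$ we have already fixed the entries in positions $1,\ldots,i-1$ to their target values $C_2(1),\ldots,C_2(i-1)$, and we use the $i$-th row of $\overline{\Gamma}(k)$ together with the sub-window it acts on to send the entry currently in position $i$ to $C_2(i)$. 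One must check that the relevant hypotheses of Proposition~\ref{prop1} transfer: after processing rows $1,\ldots,i-1$, the permutation $w_i$ has $w_i(i)=C_1(i)$ (the first $i-1$ rows only touched positions $< i$ among the window, by the structure of $\overline{\Gamma}(k)$), the target $C_2(i)$ lies to the right of position $i$ in $w_i$ (this needs $C_1 \le C_2$ componentwise plus the increasing-chain constraint, which forces the rearrangement to be "sorting-like"), and the parameter "$p$" in Proposition~\ref{prop1} is the appropriate truncation recording which entries of the window have already been set.

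\textbf{Assembling the product.} Once each row contributes a factor, by Proposition~\ref{prop1}, of the shape $t^{N_{a_i,b_i}(\cdots)}(1-t)^{1-\delta_{a_i b_i}}$ with $a_i = C_1(i)$ (the value sitting in position $i$ before row $i$ acts) and $b_i = C_2(i)$, the remaining work is purely bookkeeping: show that the product over $i=1,\ldots,k$ of the $(1-t)$-exponents equals $\mathrm{des}(C_1C_2)$, and that the sum of the $t$-exponents, together with whatever contribution is already accumulated in $N(w,T)$ from earlier rows, telescopes to $\ell(C_2)-\ell(C_1)+\mathrm{inv}(C_1C_2)$. The $(1-t)$ part is immediate: $1-\delta_{C_1(i)C_2(i)}$ is $1$ exactly when $C_1(i) < C_2(i)$ (we cannot have $C_1(i) > C_2(i)$ by the componentwise hypothesis), and the number of such $i$ is by definition $\mathrm{des}(C_1C_2)$. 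The $t$-exponent identity is the genuinely computational part; I would prove it by interpreting $\ell(C)$ (the number of strict inversions within a column after sorting, i.e.\ the statistic already used in the paper), $\mathrm{inv}(C_1C_2)$, and the $N_{a_i,b_i}$ counts all as counts of certain pairs of positions, and then exhibiting a direct matching of these pairs. In fact, a clean way is to run the telescoping through the definition of $N(w,T)$ itself: each application of a transposition $(i,j)$ from the chain changes $\ell$ and the running $N$ in a controlled way, so one can track the quantity $\ell(\pi_i[1,k]) - \ell(C_1) + (\text{partial }N)$ as an invariant-up-to-predictable-increment across the whole chain.

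\textbf{Sign conventions and degenerate cases.} Before the induction I would dispose of the boundary behavior: when $C_1(i) = C_2(i)$ for some $i$, the $i$-th row of the chain contributes no transposition in position $i$ (paralleling the $\delta_{ab}$ in Proposition~\ref{prop1}), and when $k=1$ the statement is literally Proposition~\ref{prop1} with $p=0$ (noting $\overline{\Gamma}(1) = \overline{\Gamma}(1,0)$ and $\ell$ of a length-one column is $0$, $\mathrm{inv}(C_1C_2)=0$). I would also make explicit the compatibility between the abstract increasing-chain condition defining $\mathcal{A}(\overline{\Gamma}(k))$ and the "one transposition per row touching position $i$" picture — i.e.\ argue that any $T$ with $(w,T)\in\mathcal{A}(\overline{\Gamma}(k))$ and $wT[1,k]=C_2$ necessarily decomposes as a concatenation $T = T^{(1)}\cdots T^{(k)}$ with $T^{(i)}$ drawn from the $i$-th row, and conversely, so that the sum genuinely factors as an iterated sum to which Proposition~\ref{prop1} applies at each stage.

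\textbf{Main obstacle.} The hard part will be the $t$-exponent bookkeeping across the iterated application — specifically, verifying that the $N_{c_i d_i}$ contributions recorded inside $N(w,T)$ for transpositions in \emph{later} rows are exactly what is needed to upgrade the row-by-row factors from Proposition~\ref{prop1} (which are stated relative to the \emph{current} permutation $w_i$, not the original $w$) into the clean closed form $t^{\ell(C_2)-\ell(C_1)+\mathrm{inv}(C_1C_2)}$. Getting the indexing of windows and the truncation parameters $p$ right at each stage, and confirming that no inversions are double-counted or missed at the interfaces between consecutive rows, is where the real care is required; everything else is a direct appeal to Proposition~\ref{prop1} plus elementary manipulation of the statistics $\ell$, $\mathrm{inv}$, $\mathrm{des}$ on two-column fillings.
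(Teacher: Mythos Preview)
Your proposal is correct and follows essentially the same approach as the paper: split $\overline{\Gamma}(k)$ into its $k$ rows, apply Proposition~\ref{prop1} to each to obtain the product of factors $t^{N_{C_1(i),C_2(i)}(C_1[i+1,k])}(1-t)^{1-\delta_{C_1(i)C_2(i)}}$, and then identify the resulting $t$-exponent $\sum_i N_{C_1(i),C_2(i)}(C_1[i+1,k])$ with $\ell(C_2)-\ell(C_1)+\mathrm{inv}(C_1C_2)$ by a direct matching of ascent pairs. Your anticipated ``main obstacle'' is milder than you fear: because rows $1,\ldots,i-1$ of $\overline{\Gamma}(k)$ never touch positions $i+1,\ldots,k$, the output of Proposition~\ref{prop1} for row $i$ depends only on $C_1[i+1,k]$ and not on the intermediate permutation $w_{i-1}$, so the $k$-fold sum genuinely factors as a product with no cross-row interference to track.
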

\begin{proof}
Split $\overline{\Gamma}(k)$ as $\overline{\Gamma}_{1}(k)\ldots \overline{\Gamma}_{k}(k)$ by rows, so$$\overline{\Gamma}_{i}(k)=((i,k+1), (i,k+2), \ldots, (i,n)).$$ This splitting induces one for the subsequence $T$ of $\overline{\Gamma}(k)$, namely $T=T_{1}\ldots T_{k}$. For $i=0,\ldots, k$ define $w_{i}=wT_{1}\ldots T_{i}$, in particular $w_{0}=w$. So we can split the sum in (\ref{eq:2}) as a $k$-fold sum as follows
\begin{equation*}
\sum_{\substack{T_{1}:(w,T_{1})\in \mathcal{A}(\overline{\Gamma}_{1}(k)) \\ w_{0}T_{1}(1)=C_{2}(1)}}t^{N(w_{0},T_{1})}(1-t)^{|T_{1}|}\ldots \sum_{\substack{T_{k}:(w_{k-1},T_{k})\in A(\overline{\Gamma}_{k}(k)) \\ w_{k-1}T_{k}(k)=C_{2}(k)}}t^{N(w_{k-1},T_{k})}(1-t)^{|T_{k}|}
\end{equation*}
By Proposition \ref{prop1}, we have 
\begin{equation*}
\sum_{\substack{T_{i}:(w_{i-1},T_{i})\in \mathcal{A}(\overline{\Gamma}_{i}(k)) \\ w_{i-1}T_{i}(i)=C_{2}(i)}}t^{N(w_{i-1},T_{i})}(1-t)^{|T_{i}|}=t^{N_{C_{1}(i),C_{2}(i)}(C_{1}[i+1,k])}(1-t)^{1- \delta_{C_{1}(i)C_{2}(i)}}.
\end{equation*}
So we get $t^{e}(1-t)^{des(C_{1}C_{2})}$ where $$e=\sum_{i=1}^{k-1}N_{C_{1}(i),C_{2}(i)}(C_{1}[i+1,k]).$$
Clearly, $e$ is the number of ascents $(i,j)$ in $C_{1}$(meaning that $i<j$ and $C_{1}(i)<C_{1}(j)$) for which $C_{1}(j)<C_{2}(i)$. If $(i,j)$ is an ascent in $C_{1}$ not satisfying the previous condition, so $C_{1}(j)> C_{2}(i)$, then $(i,j)$ is an ascent in $C_{2}$ by the weakly increasing rows condition on fillings. The only ascents in $C_{2}$ which do not arise this way are counted by $\textrm{inv}(C_{1}C_{2})$. Therefore, $e=\binom{k}{2}-\ell(C_{1})-(\binom{k}{2}-\ell(C_{2})-\textrm{inv}(C_{1}C_{2}))=\ell(C_{2})-(\ell(C_{1})-\textrm{inv}(C_{1}C_{2}))$.
\end{proof}

\begin{prop}\label{prop3}
Consider a filling $\sigma \in F(\lambda, n)$, whose rightmost column in \newline $C:=(\sigma(1,1), \ldots, \sigma(\lambda'_{1},1))$ Let $w$ be a permutation in $S_{n}$ satisfying $w[1,\lambda'_{1}]=C$. We have 
\begin{equation}\label{eq:3}
\sum_{T:(w,T)\in f^{-1}(\sigma)}t^{N(w,T)}(1-t)^{|T|}=t^{\textrm{inv}(\sigma)-\ell(C)}(1-t)^{\textrm{des}(\sigma)}.
\end{equation}
\end{prop}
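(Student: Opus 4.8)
The plan is to prove Proposition \ref{prop3} by induction on the number of columns of $\lambda$, reducing each step to an application of Proposition \ref{prop2}. The key observation is that the $\lambda$-chain $\overline{\Gamma}$ decomposes column-by-column: $\overline{\Gamma} = \overline{\Gamma}_1 \cdots \overline{\Gamma}_{\lambda_1}$, where $\overline{\Gamma}_j$ is essentially a copy of $\overline{\Gamma}(\lambda'_j)$ (or $\overline{\Gamma}'(\lambda'_j)$ in the degenerate case where the column height repeats). Correspondingly, for a pair $(w,T) \in f^{-1}(\sigma)$, the sequence $T$ splits as $T = T^{(1)} \cdots T^{(\lambda_1)}$, and the intermediate permutations $\pi_j = w T^{(1)} \cdots T^{(j)}$ have the property that $\pi_j[1,\lambda'_j]$ is the $j$-th column of $\sigma$; this is exactly the content of the \emph{filling map} definition and the statement that $\sigma(i,j) = \pi_j(i)$.

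First I would set up the induction on the number of columns read from \emph{right to left} — or equivalently, peel off the leftmost column last — so that at each stage I am looking at a pair of adjacent columns $C_j = $ (column $j$ of $\sigma$) and $C_{j+1} = $ (column $j+1$), with the permutation $\pi_j$ satisfying $\pi_j[1,\lambda'_j] = C_j$ and $\pi_{j+1}[1,\lambda'_{j+1}] = C_{j+1}$. Since $\sigma$ is weakly increasing in rows, $C_{j+1} \leq C_j$ holds componentwise on the overlapping part (care is needed where $\lambda'_{j+1} < \lambda'_j$: the extra entries at the bottom of column $j$ have no constraint from column $j+1$, and the chain $\overline{\Gamma}(\lambda'_{j+1})$ only touches the first $\lambda'_{j+1}$ positions). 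Applying Proposition \ref{prop2} to each adjacent pair of columns with $C_1 \rightsquigarrow C_{j+1}$ and $C_2 \rightsquigarrow C_j$ (in that proposition's notation, being careful about which column is ``left'' and which is ``right'' relative to the reading/chain convention) contributes a factor $t^{\ell(C_{j+1}) - (\ell(C_j) - \operatorname{inv}(C_j C_{j+1}))}(1-t)^{\operatorname{des}(C_j C_{j+1})}$. Multiplying these over all adjacent column pairs telescopes the $\ell(C_j)$ terms: the sum $\sum_j \big(\ell(C_{j+1}) - \ell(C_j)\big)$ collapses to $-\ell(C_1) = -\ell(C)$ (the leftmost/rightmost column of $\sigma$, matching the statement's $C$), while the $\operatorname{inv}(C_j C_{j+1})$ terms sum to $\operatorname{inv}(\sigma)$ and the $\operatorname{des}(C_j C_{j+1})$ terms sum to $\operatorname{des}(\sigma)$, since both statistics are defined cell-by-cell (or adjacent-cell-pair-by-pair) across the whole filling. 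This yields exactly $t^{\operatorname{inv}(\sigma) - \ell(C)}(1-t)^{\operatorname{des}(\sigma)}$.

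The technical heart — and the step I expect to be the main obstacle — is verifying that the $k$-fold product over columns genuinely factors the way it must: that conditioning on $(w,T) \in f^{-1}(\sigma)$ is equivalent to independently conditioning, for each $j$, on $T^{(j)}$ being a valid alcove-walk segment in $\mathcal{A}(\overline{\Gamma}_j)$ carrying $\pi_{j-1}[1,\lambda'_j]$ to $C_j$, with the Bruhat/increasing-chain condition on the full $T$ reducing to the conjunction of the per-segment conditions. One must check that the increasing chain condition in $\mathcal{A}(\overline{\Gamma})$ does not create cross-column constraints — this should follow because each $\overline{\Gamma}_j$ only involves transpositions $(i, \lambda'_j + c)$ acting within the first $\lambda'_j$ coordinates of the ambient permutation and positions beyond $\lambda'_j$, so the segments are ``coordinate-disjoint enough'' that the product splits (this is the same mechanism already used in the proof of Proposition \ref{prop2} to split $\overline{\Gamma}(k)$ into rows). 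One also has to match the additive statistic $N(w,T) = \sum_j N(\pi_{j-1}, T^{(j)})$ — immediate from the definition of $N$ as a sum over the transpositions of $T$ — against the exponent tallies coming out of Proposition \ref{prop2}, and confirm that the degenerate-column case ($\overline{\Gamma}'$ versus $\overline{\Gamma}$, i.e.\ when $\lambda'_j = \lambda'_{j-1}$ and the leading transposition $(i,\lambda'_j+1)$ of each row is dropped) does not change the count: when two consecutive columns have equal height and $C_{j} \le C_{j-1}$, the entry in row $i$ forced equal on both columns means the dropped transposition $(i,\lambda'_j+1)$ would never have been used anyway, so Proposition \ref{prop2} applies verbatim with $C_1 = C_2$ giving the trivial factor $t^0(1-t)^0$ for that portion. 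Once these bookkeeping checks are in place, the telescoping is routine.
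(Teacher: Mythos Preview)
Your approach is the same as the paper's: split $\overline{\Gamma}$ and $T$ column-by-column, apply Proposition~\ref{prop2} to each adjacent pair, and telescope. Two pieces of your bookkeeping are off, however, and they are exactly the places where the paper's proof does real work.

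First, you have the $\overline{\Gamma}'$ versus $\overline{\Gamma}$ condition reversed. By definition $\overline{\Gamma}_j=\overline{\Gamma}'(\lambda'_j)$ precisely when $j=\min\{i:\lambda'_i=\lambda'_j\}$, which for $j>1$ means $\lambda'_{j-1}>\lambda'_j$ (the column height \emph{drops}), not $\lambda'_{j-1}=\lambda'_j$. The correct justification that the dropped transpositions $(i,\lambda'_j+1)$ are harmless is that $\pi_{j-1}(\lambda'_j+1)=C_{j-1}(\lambda'_j+1)$, and the cell $(\lambda'_j+1,j-1)$ attacks every cell $(i,j)$ with $i\le\lambda'_j$; the non-attacking condition on $\sigma$ then forbids $C_j(i)=C_{j-1}(\lambda'_j+1)$, so that value is never the target. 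Your ``forced equal entries'' argument applies to the wrong case.

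Second, and more substantively, Proposition~\ref{prop2} compares two columns of the \emph{same} length $k=\lambda'_j$, so its output involves $C'_{j-1}:=C_{j-1}[1,\lambda'_j]$, not $C_{j-1}$. The factor is $t^{\ell(C_j)-\ell(C'_{j-1})+\operatorname{inv}(C'_{j-1}C_j)}$, and hence the $\ell$-terms do \emph{not} telescope to $-\ell(C)$ as you claim, nor do the $\operatorname{inv}(C'_{j-1}C_j)$ sum directly to $\operatorname{inv}(\sigma)$. The paper fixes this by regrouping:
\[
e+\ell(C_1)=\ell(C_m)+\sum_{i=2}^{m}\bigl[\ell(C_{i-1})-\ell(C'_{i-1})+\operatorname{inv}(C'_{i-1}C_i)\bigr],
\]
and then observing that $\ell(C_{i-1})-\ell(C'_{i-1})$ counts exactly the descents in $C_{i-1}$ involving a tail entry (one with no right neighbor), which are precisely the extra inversions of $\sigma$ between columns $i-1$ and $i$ not seen by $\operatorname{inv}(C'_{i-1}C_i)$. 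Together with $\ell(C_m)=0$ (since $c_m=1$ for the shape $\lambda+\rho$), this gives $e=\operatorname{inv}(\sigma)-\ell(C)$. You flagged the unequal-height case as needing care but did not supply this correction; without it the telescoping fails.
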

\begin{proof}
$\overline{\Gamma}=\overline{\Gamma}_{1}\ldots \overline{\Gamma}_{\lambda_{1}}$ induces a splitting in $T$, as $T_{1}\ldots T_{\lambda_{1}}$. Since $\lambda+\rho$ has $n-1$ boxes in its first column, $\overline{\Gamma}_{1}$ is empty, which implies $T_{1}$ is empty (by definition of $\overline{\Gamma}$). Let $m=\lambda_{1}$ be the number of columns of $\lambda$, and let $C=C_{1}, \ldots, C_{m}$ be the columns of $\sigma$, of lengths $c_{1}:=\lambda'_{1},\ldots, c_{m}:=\lambda'_{m}$. Let $C'_{i}:=c_{i}[1,c_{i+1}]$ for $i=1,\ldots,m-1$. For $i=1,\ldots,m$, define $w_{i}:=wT_{1}\ldots T_{i}$, so $w_{1}=w$. The sum in (\ref{eq:3}) can be written as an $m-1$-fold sum 
\begin{equation*}
\sum_{\substack{T_{2}:(w_{1},T_{2})\in A(\overline{\Gamma}_{2})\\ w_{2}[1,c_{2}]=C_{2}}}t^{N(w_{1},T_{2})}(1-t)^{|T_{2}|}\ldots \sum_{\substack{T_{m}:(w_{m-1},T_{m})\in A(\overline{\Gamma}_{m})\\w_{m}[1,c_{m}]=C_{m}}}t^{N(w_{m-1},T_{m})}(1-t)^{|T_{m}|}
\end{equation*}
By Proposition $\ref{prop2}$, we have 
\[
\sum_{\substack{T_{i}:(w_{i-1},T_{i})\in A(\overline{\Gamma}_{i})\\w_{i}[1,c_{i}]=C_{i}}}t^{N(w_{i-1},T_{i})}(1-t)^{|T_{i}|}=
\]
\[
 t^{\ell(C_{i})-(\ell(C'_{i-1})-\textrm{inv}(C'_{i-1}C_{i}))}(1-t)^{\textrm{des}(C_{i-1}C_{i})}
 \]

We can see that the above sum does not depend on the permutation $w_{i-1}$, and only on $C_{i-1}$ and $C_{i}$. Therefore, the $m-1$-fold sum is a product of $m-1$ factors, and evaluates to $t^{e}(1-t)^{\textrm{des}(\sigma)}$, where 
\begin{equation*}
e=\sum_{i=2}^{m}\ell(C_{i})-\ell(C'_{i-1})+\textrm{inv}(C'_{i-1}C_{i}).
\end{equation*}  
Thus,
\[e+\ell(C)=\ell(C_{1})+\sum_{i=2}^{m}\ell(C_{i})-\ell(C'_{i-1})+\textrm{inv}(C'_{i-1}C_{i})=
\]
\[
 \ell(C_{m})+\sum_{i=2}^{m}\ell(C_{i-1})-\ell(C'_{i-1})+\textrm{inv}(C'_{i-1}C_{i})=\textrm{inv}(\sigma).
\]
Where the last equality comes from the fact that $\ell(C_{i-1})-\ell(C'_{i-1})$ is the number of descents in $C_{i-1}$ which include the bottom entry of this column. Assume $c_{i-1}\not = c_{i}$ (otherwise, $\ell(C_{i-1})-\ell(C'_{i-1})=0$). Since we have weakly increasing rows in $\sigma$ then any descent in $C_{i-1}$ involving the bottom entry of this column is counted by $\textrm{inv}(\sigma)$ because it has no entry to its right. In addition, $\ell(C_{m})=0$ since $c_{m}=1$.
\end{proof}

We are now ready to prove Theorem $\ref{whittaker}$.

\begin{proof1}
We rewrite the sum in Theorem $\ref{whittaker}$, by splitting $T$ as $T_{1}T'$. We let $w'=wT_{1}$ and consider decreasing chains starting at $w'$.
\[
\sum_{(w,T)\in f^{-1}(\sigma)}(-1)^{\ell(wT)}t^{\frac{1}{2}(\ell(w)+\ell(wT)-|T|)}(t-1)^{|T|}=
\]
\[
\sum_{\substack{(w',T')\in f^{-1}(\sigma)\\ T'_{1}:(w',T^{r}_{1})\in \mathcal{A}'(\overline{\Gamma}^{r}(\lambda'_{1}))}}(-1)^{\ell(wT)}t^{\frac{1}{2}(\ell(w'T^{r}_{1})+\ell(w'T')-|T'|-|T_{1}|)}(t-1)^{|T_{1}|+|T'|}
\]
Next, we add and subtract $\frac{1}{2}\ell(w')$ in the exponent of $t$ and split the sum by terms depending on $T_{1}$, leading to the following product
\[
\sum_{(w',T')\in f^{-1}(\sigma)}(-1)^{\ell(w'T')}t^{\frac{1}{2}(\ell(w'T')-\ell(w')-|T'|)}(t-1)^{|T'|}\times
\]
\[
\sum_{T^{r}_{1}:(w',T^{r}_{1})\in A'(\overline{\Gamma}^{r}(\lambda'_{1}))}t^{\frac{1}{2}(\ell(w'T^{r}_{1})+\ell(w')-|T_{1}|)}(t-1)^{|T_{1}|}
\]
Since $T^{r}_{1}$ is always empty (by definition of $\lambda$-chains) the sum on the bottom of the above expression evaluates to $t^{\ell(w')}$. We now have 
\begin{align*}
\sum_{(w',T')\in f^{-1}(\sigma)}(-1)^{\ell(w'T')}t^{\frac{1}{2}(\ell(w'T')-\ell(w')-|T'|)+\ell(w')}(t-1)^{|T'|}
\end{align*}
Using the length formula $\ell(w'T')=\ell(w')+|T'|+2N(w',T')$, we notice that $\ell(w'T') \equiv \ell(w')+|T'| \textrm{  mod 2}$. Therefore, $(-1)^{\ell(w'T')}=(-1)^{\ell(w')+|T'|}$. The sum then becomes 
\begin{align*}
(-t)^{\ell(w')}\sum_{(w',T')\in f^{-1}(\sigma)} t^{N(w',T')}(1-t)^{|T'|}
\end{align*}
By Proposition 4.3 we get
\begin{align*}
(-t)^{\ell(w')} \cdot t^{\textrm{inv}(\sigma)-\ell(C)}\cdot(1-t)^{\textrm{des}(\sigma)}=\\
(-t)^{\ell(C)+n-a_{0}}\cdot t^{\textrm{inv}(\sigma)-\ell(C)}\cdot(1-t)^{\textrm{des}(\sigma)}=\\
(-1)^{\ell(C)+n-a_{0}}\cdot t^{n-a_{0}+\textrm{inv}(\sigma)}\cdot(1-t)^{\textrm{des}(\sigma)}.
\end{align*}
Where $a_{0}$ is the unique value absent from the the first column, $C$.
\end{proof1}

\section{Whittaker Strong Compression}
\subsection{Outline of results}
In this chapter we compress our HHL-type formula to Tokuyama's formula. This is accomplished by studying the preimage of a SSYT under the \emph{sort} map, which sorts the columns of a HHL filling in an increasing order. We define an algorithm which uniquely generates this preimage. This allows for a representation of two column HHL configurations graphically as a tree. The algorithm uses a sequence of transpositions and considers subsequences as paths in a tree diagram starting at a central \emph{root}. By fixing the right side of an HHL filling, we generate all possible left sides by using our algorithms recursively. By studying the tree diagrams produced by these procedures, we obtain a bracketing rule that outlines the order that terms should be combined to perform compression. Furthermore, this algorithm does this bracketing in a binary manner, i.e., $t^{a}(1-t)^{b}+t^{a+1}(1-t)^{b-1}=t^{a}(1-t)^{b-1}$. After performing the compression at full scale, we show that the resulting statistics are equivalent to the statistics derived via the bijection between GT-patterns and SSYT.

\subsection{Generation algorithm}
\indent We now turn to our generation algorithm. We generate all HHL fillings by ordering the columns of a SSYT from right to left. Given a fixed right column, and a set of entries to be ordered and placed in the left column, we lay out an algorithm which produces all two column HHL configurations. We then iterate this procedure from right to left, taking each newly constructed left column as the fixed right column and consider the set of entries in the column to the left, stopping when we have run out of columns. 
\newline\indent We achieve this goal by producing a binary search tree. We construct a root configuration and a sequence of transpositions specified below. At each node we have the choice to apply a transposition from this sequence or not. The application of a transposition produces a new two column HHL configuration. 
\newline\indent The inputs of our algorithm are a fixed right column $C'$ of size $c'$ and a set $S=\{a_{1}>a_{2}>\ldots>a_{c}\}$ where $c=c'$ or $c=c'+1$ such that $\overrightarrow{S}\overrightarrow{C'}$ is weakly increasing in rows. We now give the construction of the root configuration of our search tree. 
\begin{definition}
Define $\widehat{C}$ to be the column of entries taken from $S$ dependent on $C'$  as follows: Starting with the largest entry of $S$, $a_{1}$, we check if $a_{1}$ is an entry in $C'$. If it is, place it to the left of $a_{1}$ in $C'$. If it is not, place it to the left of the largest entry in $C'$ with empty box to its left if the columns are the same size, otherwise place the entry at the bottom of the left column. Proceed to the second largest entry in $S$ and repeat. After placing the second largest entry, proceed to the third and so on until all entries of $S$ are placed. 
\end{definition}
The following proposition shows that this root configuration is in fact HHL. 
\begin{prop}
Given $S$ and $C'$ satisfying the above conditions, we have that $\widehat{C}C'$ is HHL. 
\end{prop}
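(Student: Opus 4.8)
The plan is to verify directly the two defining properties of an HHL filling for $\widehat{C}C'$: that it is weakly increasing along rows, i.e.\ $\widehat{C}(i)\le C'(i)$ whenever row $i$ meets both columns, and that no two attacking cells carry equal entries. Call an entry of $S$ \emph{matched} if it also occurs in $C'$ and \emph{unmatched} otherwise; thus the algorithm places each matched entry directly to the left of its occurrence in $C'$, and processes the unmatched entries in decreasing order, each going into the row carrying the largest still-uncovered entry of $C'$ (with one exception, treated at the end, when $c=c'+1$). First I would note that the algorithm is well defined: a short count shows that at the step where the $\ell$-th largest unmatched entry is placed, the number of already-occupied rows equals the number of matched entries exceeding it, plus $\ell-1$, which is strictly less than the relevant column length, so a free row always remains. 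The attacking condition is then essentially free: within a column the entries are distinct (those of $\widehat{C}$ come from the \emph{set} $S$, those of $C'$ are distinct by hypothesis), and if $\widehat{C}(i)=C'(k)$ with $i>k$, then the common value $v$ lies in $S\cap C'$, hence is matched, hence was placed by the algorithm into the row of $C'$ containing $v$, namely row $k$; so $\widehat{C}(k)=v=\widehat{C}(i)$, contradicting distinctness of the entries of $\widehat{C}$.

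The core of the argument is the row inequality at a cell filled by an unmatched entry (at a cell filled by a matched entry $v=C'(i)$ one has $\widehat{C}(i)=v=C'(i)$, so there is nothing to check). Write the entries of $C'$ in increasing order as $b_1<b_2<\dots<b_{c'}$, so that the hypothesis that $\overrightarrow{S}\,\overrightarrow{C'}$ is weakly increasing in rows says precisely that the $k$-th smallest element of $S$ is $\le b_k$ for $1\le k\le c'$. Fix a row $i$ filled with an unmatched entry $u$, the $\ell$-th largest unmatched entry of $S$. When $u$ is placed, every row whose $C'$-entry exceeds $C'(i)$ is already occupied (that is what ``largest uncovered entry'' means); writing $C'(i)=b_s$, this says that $c'-s$ is at most the number of occupied rows, which equals the number of matched entries exceeding $u$, plus $\ell-1$. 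On the other hand $u$ equals the $r$-th smallest element of $S$, where $r$ counts the unmatched entries $\le u$ together with the matched entries $\le u$, so the hypothesis gives $u\le b_r$. Both $s$ and $r$ are now expressed through the same three quantities --- $|S\cap C'|$, the number of matched entries that are $\le u$, and $\ell$ --- and a one-line comparison yields $s\ge r$; hence $\widehat{C}(i)=u\le b_r\le b_s=C'(i)$, as required.

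Finally I would handle $c=c'+1$. The only new feature is the extra bottom cell of the left column, which the algorithm fills with the first (hence largest) unmatched entry $a$; since that cell has no cell to its right, it plays no role in either HHL condition, and the remaining $c'$ entries of $S$ are distributed over the top $c'$ rows by exactly the same-size rule. To rerun the previous paragraph it is enough to check that $S\setminus\{a\}$ still satisfies ``its $k$-th smallest element is $\le b_k$'' for $1\le k\le c'$: if $a=\max S$ this is immediate from the hypothesis, and if $a<\max S$ --- which forces every element of $S$ that exceeds $a$ to be matched, since $a$ is the \emph{largest} unmatched entry --- then those elements are distinct members of $\{b_1,\dots,b_{c'}\}$, and the fact that $j-1$ distinct values sit in positions $\le c'$ of the sorted list forces the required shift-by-one inequalities. \textbf{The main obstacle} is exactly this bookkeeping: establishing $s\ge r$ uniformly, and disposing of the $c=c'+1$, $\max S\in C'$ subcase, both hinge on keeping track of statistics --- which entries are matched, how many earlier unmatched entries have been placed, and in which rows --- that are only implicitly pinned down by $C'$ together with the processing order.
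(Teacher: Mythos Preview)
Your argument is correct and follows the same plan as the paper's: verify directly that $\widehat{C}C'$ is non-attacking and weakly increasing in rows, with matched entries giving equality and unmatched entries handled via the hypothesis on $\overrightarrow{S}\,\overrightarrow{C'}$. The only real difference is in the execution of the row inequality: the paper gives a short inductive sketch (assume without loss of generality that $C'$ is sorted, pair the largest distinct entry of $C'$ with the largest distinct entry of $S$, observe the required inequality from the hypothesis, then peel these off and repeat), whereas you carry out an explicit rank comparison $s\ge r$ for a fixed unmatched entry. Your treatment of the $c=c'+1$ case is likewise more explicit than the paper's one-line ``omit that box and use the above argument.'' Both routes are valid; the paper's peel-off induction is terser and avoids the bookkeeping you flag as the main obstacle, while your counting argument is more self-contained and does not rely on the sorted-$C'$ reduction.
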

\begin{proof}
First, $\widehat{C}C'$ has no diagonal attacking configurations by definition. Furthermore, $\widehat{C}C'$ is weakly increasing in rows since we have equality in all rows which have a common entry in both columns and a strict increase in rows which have distinct entries between these two columns. To see this, assume $C'$ is strictly increasing and $c=c'$. Consider the largest distinct entry in $C'$, $y$. The entry to its left, $x$ in $\overrightarrow{S}$ is greater than or equal to the largest distinct entry in $\overrightarrow{S}$, $y'$. So $y'$ is placed next to $y$. Go to the next largest distinct entry, and repeat. If $c=c'+1$ then the largest distinct entry in $S$ is placed at the bottom of the column, we can then omit that box and use the above argument.
\end{proof}
\indent We now introduce an important property of entries when considering pairs of columns. This definition is necessary for constructing the sequence of transpositions which we will use to generate our search tree. Furthermore, entries which satisfy this property are relevant to the compression we perform. 
\begin{definition}
Call row $i$ a \emph{pivot} if $\widehat{C}(i)<C'(i)$ if $C'(i)$ exists. If $i=c'+1$ then we consider this a pivot row as well. Let $p$ be the number of pivots in columns  $\widehat{C}C'$. If $i$ is a pivot row, call $\widehat{C}(i)$ and $C'(i)$ \emph{pivot entries}.
\end{definition} 
 Let $b_{1}>b_{2}>\ldots >b_{p}$ be the pivot entries in the left column. The pivot entries strictly less that $a_{i}$ are denoted $b_{j_i}>\ldots > b_{p}$ with $j_{1} \leq j_{2} \leq \ldots\leq j_{c}$. We have the following sequence of transpositions:
\[
\beta=
\begin{matrix}
(b_{j_1}<a_{1}) & (b_{j_{1}+1}<a_{1}) & (b_{j_1+2}<a_{1}) &\ldots & (b_{p}<a_{1})\\
(b_{j_2}<a_{2}) & (b_{j_{2}+1}<a_{2}) & (b_{j_2+2}<a_{2}) &\ldots & (b_{p}<a_{2})\\
\ldots \\
(b_{j_c}<a_{c}) & (b_{j_{c}+1}<a_{c}) & (b_{j_c+2}<a_{c}) &\ldots & (b_{p}<a_{c})\\
\end{matrix}
\]
and, 
\[
\beta_{(b_{j_i+k}<a_{l})}=
\begin{matrix}
(b_{j_{i}+k+1}<a_{l}) & (b_{j_{i}+k+2}<a_{l}) & (b_{j_{i}+k+3}<a_{l}) &\ldots & (b_{p}<a_{l})\\
(b_{j_{i+1}}<a_{l+1}) & (b_{j_{i+1}+1}<a_{l+1}) & (b_{j_{i+1}+2}<a_{l+1}) &\ldots & (b_{p}<a_{l+1})\\
\ldots \\
(b_{j_c}<a_{c}) & (b_{j_{c}+1}<a_{c}) & (b_{j_c+2}<a_{c}) &\ldots & (b_{p}<a_{c})\\
\end{matrix}
\]
for a transposition $(b_{j_i+k}<a_{l}) \in \beta$. Consider the total order $\preceq$ given by reading the sequence $\beta$ from left to right and top to bottom. 
\begin{definition}
Let $\mathcal{A}(\beta)$ (or $\mathcal{A}(\beta_{(b_{j_i+k}<a_{l})}))$ be the set of all subsequences such that there is only one transposition from each row of $\beta$ (or $\beta_{(b_{j_i+k}<a_{i})}$). 
\end{definition}
Here we recall that the right column is fixed, so a transposition $(a<b) \in \beta$ acts on the left column by switching the positions of $a$ and $b$ in the left column. 
\begin{definition}
Given $CC'$ a two column HHL configuration, and a sequence of transpositions $\sigma_{1}\ldots \sigma_{k}\in \mathcal{A}(\beta)$ define an action on $CC'$ given by $\sigma_{1}\ldots \sigma_{k}.CC'=\sigma_{k}\ldots \sigma_{1}CC'$ if $\sigma_{k}\ldots \sigma_{1}CC'$ is a two column HHL configuration, and $\sigma_{1}\ldots \sigma_{k}.CC'=\emptyset$ if $\sigma_{k}\ldots \sigma_{1}CC'$ is not  a two column HHL configuration. 
\end{definition}
\begin{definition}
A transposition of entries, $\sigma_{k}$, is called \emph{legal} with respect to a two column HHL configuration $CC'$, if we have $\sigma_{k}CC'$ is HHL.  
\end{definition}
\indent We now introduce a condition on two column HHL configurations known as the non-overlapping condition. This condition, as we will see in the next subsection, detects when our sum completely cancels. We wish to study the intervals formed between the pivot entries in our two column HHL configuration. If the intersection between these intervals is empty for all pairs of intervals then we do not have a legal switch between pivot entries in $\beta$. If it is not empty, then there exists a legal switch between pivot entries, and so the $t$-coefficients associated to this HHL fillings will cancel when paired. 
\newline\indent Let $q_{1},\ldots, q_{p}$ be the pivot rows of $\widehat{C}C'$ and $b_{j}$ be the pivot entry corresponding to pivot $q_{j}$. Let $d_{j}=C'(q_{j})$ if it exists (otherwise take $d_{j}=\infty$). Define,
\[
P_{j,k}=[b_{j},d_{j}] \cap [b_{k},d_{k}]
\]
\begin{definition}If $p>1$ then we say $\widehat{C}C'$ satisfies the \emph{Non-overlapping Condition} if $P_{j,k}=\emptyset$ for all pairs $j \neq k$ in $[p]$. If $p=0$ or $1$ then the \emph{Non-overlapping Condition} is automatically satisfied. 
\end{definition}
We now present the main proposition of this subsection. Below, we show that the generation algorithm uniquely generates all two column HHL configurations. Once we have this result, we can iterate the above algorithm by considering columns from right to left, extending the result to the entire preimage of a SSYT. 
\begin{prop}
$\mathcal{A}(\beta).\widehat{C}$ uniquely generates all permutations of $\widehat{C}$ giving an HHL two column configuration when placed to the left of $C'$. 
\end{prop}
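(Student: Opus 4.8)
The statement has two parts: (1) every HHL two-column configuration $CC'$ with right column $C'$ and left-column entry set $S$ is obtained as $\sigma_1\ldots\sigma_k.\widehat C$ for some $\sigma_1\ldots\sigma_k\in\mathcal A(\beta)$, and (2) the representation is unique. I would treat generation and uniqueness together by induction on the depth in the search tree, exploiting the recursive self-similarity built into the definition of $\beta$: the tail $\beta_{(b_{j_i+k}<a_l)}$ is again a sequence of the same shape, so after applying one legal transposition the remaining choices are governed by an instance of the same algorithm with a smaller pivot-entry set. This is what lets the single-step claim bootstrap to the full claim.

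First I would analyze a single row of $\beta$. Fix the largest entry $a_1$ of $S$ and its pivot position. In $\widehat C$, by construction, $a_1$ sits as far up as possible (next to $a_1$ in $C'$ if present, else next to the largest available entry, else at the bottom). The transpositions $(b_{j_1}<a_1),(b_{j_1+1}<a_1),\ldots,(b_p<a_1)$ in the first row of $\beta$ move $a_1$ downward by swapping it successively with smaller pivot entries. The key observation is: choosing exactly one transposition $(b_{j_1+k}<a_1)$ from this row (or choosing none) determines precisely the final position of $a_1$ relative to the pivot entries, and every HHL-legal position of $a_1$ arises from exactly one such choice. One must check that the HHL conditions (weak increase in rows, no column or diagonal attack) translate exactly into "$a_1$ lands in a position strictly above all pivot entries it has not passed, weakly below $C'(i)$ in its row" — this is where the pivot-entry bookkeeping $b_{j_1}>\cdots>b_p$ does its work. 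Having pinned down $a_1$, the remaining entries $a_2,\ldots,a_c$ must be arranged among the remaining slots, and the induced sub-configuration is exactly the root configuration $\widehat C$ for the reduced problem, with transposition sequence $\beta_{(b_{j_1+k}<a_1)}$ — this is the recursive step, and it is where I would invoke the structural definition of $\beta_{(\cdot)}$.

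Then the induction is: assume the algorithm uniquely generates all HHL configurations of the reduced problem (fewer $a$'s, or $a_1$ fixed lower, hence a shorter $\beta$-tail); the single-step analysis above shows each legal placement of $a_1$ corresponds to a unique first-row choice in $\beta$ and leaves a reduced instance; composing, each HHL configuration $CC'$ factors uniquely as "place $a_1$" then "solve the rest", giving a unique element of $\mathcal A(\beta)$. For surjectivity one runs the same argument in reverse: given target $CC'$, read off where $a_1$ sits, pick the forced first-row transposition (or none), pass to the reduced target, recurse. I would also need the base case — when $p\le 1$ or when $S$ already equals the sorted left column, $\mathcal A(\beta)$ is a single (empty) sequence and $\widehat C$ is the unique HHL left column — which follows directly from the definitions.

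**Main obstacle.** The delicate point is verifying that applying a prefix of chosen transpositions never transiently breaks the HHL property in a way that the action's "$=\emptyset$ if not HHL" clause would discard a configuration that is nonetheless reachable by a different order — i.e., that the fixed reading order $\preceq$ on $\beta$ is compatible with HHL-legality at every intermediate step. Concretely, I expect to prove a lemma: if $\sigma_1\ldots\sigma_k\in\mathcal A(\beta)$ and the final $\sigma_k\ldots\sigma_1\widehat C C'$ is HHL, then every intermediate $\sigma_j\ldots\sigma_1\widehat C C'$ is HHL. This should follow because each transposition in a given row only moves one entry $a_l$ strictly downward past a pivot entry, and the row order of $\beta$ processes $a_1,a_2,\ldots$ in decreasing order of magnitude, so earlier moves cannot be invalidated by later ones — but making this airtight, handling the $c=c'+1$ case where an entry sits at the column bottom, and checking the diagonal-attack condition throughout, is the real work.
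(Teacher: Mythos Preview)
Your approach is correct in spirit and shares the paper's core idea---process entries from largest to smallest---but the paper's execution is considerably more direct. Rather than setting up a recursion via the $\beta_{(\cdot)}$ tail and verifying that the reduced problem after placing $a_1$ is a genuine new instance of the same setup, the paper simply takes the target $CC'$ and defines $\sigma$ as the sequence that, at each step, moves the \emph{largest misaligned entry} between the current column and $C$ into its final position in $C$. It then checks two things by induction on $|\sigma|$: (i) each such move necessarily swaps with a pivot entry (otherwise the row containing the moved entry would violate weak increase in $CC'$), so the transposition lies in $\beta$; and (ii) each move is legal, because the entry being placed lands in its final row of $C$ (so that row is fine) and the displaced pivot entry is smaller than what was there before (so the other affected row and the attacking conditions are preserved). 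Uniqueness comes for free: the canonical ``largest misaligned entry first'' rule determines $\sigma$ completely from $CC'$.

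Your ``main obstacle''---that intermediate configurations stay HHL---is exactly what the paper's legality paragraph handles, and it does so in two sentences by exploiting that the moved entry lands in its \emph{final} row. By contrast, your recursive framing obliges you to show that after one first-row move the resulting column is the root $\widehat C$ for a smaller problem governed by $\beta_{(\cdot)}$; this is not false, but it is extra bookkeeping the paper avoids. If you want to streamline, drop the tree-depth induction and argue directly from the target $CC'$ as the paper does.
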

\begin{proof}
Let $C$ be a left column such that $CC'$ is an HHL two column configuration. Let $\sigma$ be the unique sequence of transpositions applied to $\widehat{C}$ which places entries in their corresponding position in $C$ from largest to smallest. We need to show that $\sigma$ is in $\mathcal{A}(\beta)$. In other words, it suffices to show that each transposition in this sequence involves a pivot entry and that each transposition is legal. 
\newline\indent We first show that each transposition involves a pivot entry. To do this, we perform induction on the number of transpositions in $\sigma$. First, consider when $\sigma$ is a single transposition. Let $x$ be the largest misaligned entry between $\widehat{C}$ and $C$. We have that $x$ must be switched with a pivot entry since otherwise we have a strict decrease in the row $x$ occupies in $C$. Therefore $\sigma$ switches a pivot entry with an arbitrary entry. Now assume that if $\sigma=\sigma_{i-1}\ldots\sigma_{1}$ then each $\sigma_{k}$ is a transposition which involves a pivot entry for all $1\leq k\leq i-1$. Consider when $\sigma=\sigma_{i}\ldots\sigma_{1}$. Let $x$ be the largest misaligned entry between $C$ and $\sigma_{i-1}\ldots\sigma_{1}\widehat{C}$. Let $j$ be the row such that $C(j)=x$. Let $a=\sigma_{i-1}\ldots\sigma_{1}\widehat{C}(j)$ and let $b=C'(j)$. Assume for contradiction that $a$ is a non-pivot entry. By induction $\sigma_{i-1},\ldots,\sigma_{1}$ are transpositions between entries larger than $x$ and pivot entries, and since $a<x$ then $a$ has not been moved. Therefore, by the construction of $\widehat{C}$, $a=b$. This implies that $CC'$ is not weakly increasing in rows since $C(j)=x$ and $C'(j)=a$. Therefore, $a$ is a pivot. 
\newline\indent Furthermore all transpositions applied in such a manner are legal. Assume some transposition $(a<b)$ of entries is applied to the left column of an intermediate HHL filling. We need to check that the resulting filling is HHL. Since $b$ is placed in its final position with respect to $C$, and since $CC'$ is HHL, the row in which $b$ is placed is also weakly increasing. Likewise, if $x$ is the entry to the right of $b$ in $C'$ then $a$ is placed to the left of $x$; but since $a<b$ and $b\leq x$ we have $a<x$. In addition, we do not introduce any attacking configurations. Indeed, it suffices to check this for the entry $b$; but this entry is placed in its final position with respect to $C$ which must be non-attacking. 
\end{proof}
\indent We now turn to some combinatorics related to $\mathcal{A}(\beta)$ and the Non-overlapping Condition. By the previous proposition, for a two column HHL configuration $CC'$, let $\sigma\in\mathcal{A}(\beta)$ such that $C=\sigma.\widehat{C}$. We have the following properties which lead to a property we refer to as \emph{Cycle Structure}.
\begin{lemma}Let $C$ be a column such that $CC'$ is HHL. If there exists a legal transposition between pivot entries in $C$ then the Non-overlapping Condition is not satisfied. 
\end{lemma}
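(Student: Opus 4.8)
The plan is to prove the contrapositive: assuming the Non-overlapping Condition, I will show $C$ admits no legal interchange of two pivot entries. The first step is to pin down what ``legal'' means here. Suppose $\tau$ interchanges pivot entries $b>b'$, occupying rows $\rho$ and $\rho'$ of $C$. Since the pivot entries are exactly the elements of $S$ not occurring in $C'$, neither $b$ nor $b'$ equals any entry of $C'$; hence $\tau$ creates no diagonal attacking pair and keeps the left column's entries distinct, so $\tau$ is legal with respect to $CC'$ iff $\tau CC'$ is still weakly increasing along rows. Only rows $\rho,\rho'$ change, and row $\rho$ stays valid because $b'<b=C(\rho)\le C'(\rho)$; thus legality is equivalent to the single inequality $b\le C'(\rho')$.

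The crux is a containment invariant holding for \emph{every} HHL configuration $CC'$: whenever a pivot entry $b$ sits in a row $\rho$ of $C$, the two cells of that row already lie in a common pivot interval, i.e.\ $[b,C'(\rho)]\subseteq[b_l,d_l]$ for some pivot row $q_l$ (so $b_l\le b$ and $C'(\rho)\le d_l$). For $C=\widehat C$ this is the tautology $[b_i,C'(q_i)]=[b_i,d_i]$. In general I would invoke the preceding proposition to write $C=\sigma.\widehat C$ and induct on the length of $\sigma$: each step applies a legal $\beta$-transposition, which either fixes a given pivot entry or slides it into the seat just vacated by a strictly larger element of $S$, and the inequality expressing legality of that step — the displaced element must not exceed the right-hand value of the seat it enters — is exactly what keeps the pivot entry's new row inside a single pivot interval (using the inductive hypothesis for the displaced element when it too is a pivot entry, and recalling that under the Non-overlapping Condition $\bigcup_l[b_l,d_l]$ is a disjoint union of intervals, so a connected set lying in it lies in one of them). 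The edge case $c=c'+1$ is handled separately: the bottom-left cell has no right-hand partner, the relevant $d_l$ is $\infty$, and $[b_l,\infty)$ absorbs anything above $b_l$.

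Granting the invariant, the conclusion is short. Apply it to the pivot entry $b'$ in row $\rho'$: there is a pivot row $q_l$ with $b_l\le b'$ and $C'(\rho')\le d_l$. Let $q_j$ be the pivot row with $\widehat C(q_j)=b$, so $d_j=C'(q_j)$ and $b\le d_j$. Then the inequality from the first step gives $b=b_j\le C'(\rho')\le d_l$, while $b_l\le b'<b_j$; hence $b_j$ lies in both $[b_j,d_j]$ and $[b_l,d_l]$, and these are distinct pivot intervals since $b_l<b_j$. Therefore $P_{j,l}\ne\emptyset$, contradicting the Non-overlapping Condition.

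The real obstacle is establishing the containment invariant, specifically its inductive step: one must follow a pivot entry through the generation algorithm and verify that legality of each $\beta$-transposition prevents a pivot entry from ever being paired, within its row, with a right-hand value straddling a gap between pivot intervals. The bookkeeping is delicate — one must separate the case where the element swapped past a pivot entry is itself a pivot entry from the case where it is a non-pivot value, and it may be cleanest to strengthen the inductive statement to control every row of $C$ rather than only the rows occupied by pivot entries — but it is all governed by the explicit descriptions of $\widehat C$ and of the transpositions $\beta$.
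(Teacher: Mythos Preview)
Your approach is sound and genuinely different from the paper's. The paper argues by induction on the length of the generating word $\sigma$ in the \emph{forward} direction of the statement: given a legal pivot--pivot transposition $(x<y)$ in $C=\sigma.\widehat{C}$, it peels off $\sigma_k$ and checks, case by case, that a legal pivot--pivot transposition persists in $\sigma_k^{-1}C$; iterating reaches $\widehat{C}$, where the violation of Non-overlapping is immediate from $x<y<x'<y'$. No auxiliary invariant is needed.

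Your route is the contrapositive via the containment invariant ``every pivot entry sits in a row whose two cells already lie in a single pivot interval.'' The final deduction from the invariant is correct. The one place your sketch is loose is the inductive step when the $\beta$-transposition $\sigma_i=(b<a)$ has $a$ itself a pivot. Your phrasing (``using the inductive hypothesis for the displaced element'' and the connectedness remark about $\bigcup_l[b_l,d_l]$) suggests you want to carry the invariant through that case; in fact you cannot, because $[b,C'(\rho_a)]$ need not lie in the union at all. The right move is to observe that this case is \emph{impossible} under Non-overlapping: by the inductive invariant for $b$ one has $a\le C'(\rho_b)\le d_j$ with $b=b_j$, so the pivot $a=b_{j'}$ lies in $[b_j,d_j]\cap[b_{j'},d_{j'}]$ with $j\ne j'$, contradicting Non-overlapping. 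Thus only the non-pivot case survives, where $a$ has not yet moved, $C'(\rho_a)=a\le C'(\rho_b)\le d_j$, and the invariant for $b$ is immediate. With that correction your argument goes through. Compared to the paper, your method yields the structural invariant as a byproduct (useful later), while the paper's backward transport is shorter and avoids introducing any auxiliary statement.
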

\begin{proof} By Proposition 4.2, suppose $\sigma.\widehat{C}C'=CC'$ for $\sigma \in \mathcal{A}(\beta)$. We perform induction on the length of $\sigma$. If $\sigma=\emptyset$ and there exists a legal transpostion between pivot entries $(x<y)$ in $\widehat{C}C'$ then the Non-overlapping Condition is not satisfied. Indeed, if $x'$ and $y'$ are the entries to the right of $x$ and $y$ and $(x<y)$ is legal then $y<x'$. By the construction of $\widehat{C}$, $x'<y'$ so we have $x<y<x'<y'$ which implies $[x,x']\cap[y,y']\neq \emptyset$. 
\newline\indent Assume for $\sigma=\sigma_{1}\ldots\sigma_{k-1}$ with $\sigma.\widehat{C}C'=CC'$ such that $CC'$ is HHL, that if there exists a legal transposition between pivot entries then the Non-overlapping Condition is not satisfied. Let $\sigma=\sigma_{1}\ldots\sigma_{k}$ such that $\sigma.\widehat{C}C'=CC'$ is HHL and assume there exists a legal transposition between pivot entries, $(x<y)$. Apply $\sigma_{k}$ to $CC'$. If $\sigma_{k}$ does not involve $x$ or $y$ then we can use the induction hypothesis since $x$ and $y$ were not moved. If $\sigma_{k}$ involves both $x$ and $y$ then we can use the induction hypothesis since $(x<y)$ remains a legal transposition between pivot entries with respect to $\sigma_{k}.CC'$. If $\sigma_{k}$ involves $x$, then let $a\neq y$ be the entry $x$ is switched with. If $a$ is a pivot entry then either $a<x$ or $x<a$. In both cases, this is a legal transposition between pivot entries with respect to $\sigma_{k}.CC'$ so we can use the induction hypothesis. Finally, if $a>x$ and $a$ is a non-pivot entry, then $y$ is less than the entry to the right of $x$ in $\sigma_{k}.CC'$ since $(x<y)$ is legal in $CC'$. Then $(x<y)$ is a legal transposition between pivot entries with respect to $\sigma_{k}.CC'$, which implies we can use the induction hypothesis. The same arguments work analogously for if $\sigma_{k}$ involves just $y$.
\end{proof}
\begin{lemma}If the Non-overlapping Condition is satisfied then there exists a unique pivot in each disjoint cycle of $\sigma\in \mathcal{A}(\beta)$ as a permutation. 
\end{lemma}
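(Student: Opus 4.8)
The plan is to read the cycle structure of $\sigma$ off a graph recording which transpositions of $\beta$ it uses, and then to rule out the one bad configuration by a single appeal to the previous lemma.

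First the graph. Write $\sigma=\sigma_1\cdots\sigma_k$ with the $\sigma_i$ taken from distinct rows of $\beta$, and recall that every transposition of $\beta$ has the form $(b<a)$ with $b$ a pivot entry and $a\in S$. Let $G_\sigma$ be the (simple) graph on the entries moved by $\sigma$ with one edge $\{b,a\}$ for each $\sigma_i=(b<a)$. Two remarks follow at once from the shape of $\beta$: every edge of $G_\sigma$ has a pivot endpoint, namely its $b$-side; and a non-pivot entry $a$ can occur in $\sigma$ only as the larger entry of the unique transposition in the row of $\beta$ indexed by $a$, so non-pivot vertices have degree at most one in $G_\sigma$. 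Granting for the moment that $G_\sigma$ has no edge joining two pivot entries, it follows that every edge joins a pivot to a non-pivot, every non-pivot is a leaf, no two pivots lie in one component (a connecting path would run through a non-pivot of degree $\ge 2$), and hence $G_\sigma$ is a disjoint union of stars, each centred at a single pivot entry. The transpositions of distinct stars have disjoint supports and commute, while the $s$ transpositions of a star with $s$ leaves multiply, in any order, to a single $(s{+}1)$-cycle on that star's vertex set (the standard fact that the edge-transpositions of a tree always multiply to one full cycle). Therefore the non-trivial cycles of $\sigma$ are exactly the stars, each carrying precisely one pivot, which is the assertion.

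It remains to show that, under the Non-overlapping Condition, $G_\sigma$ has no edge between two pivot entries. Suppose it did, because $\sigma$ contains a transposition $\tau=(b'<b)$ with $b$ and $b'$ both pivot entries; $\tau$ lies in the row of $\beta$ indexed by $a=b$. Since $\sigma$ was chosen with $\sigma.\widehat{C}=C$ an HHL two-column configuration, the proof of the generation proposition shows that $\widehat{C}$ is HHL and that each transposition of $\sigma$ acts legally on the configuration preceding it, so every intermediate left column is HHL; let $D$ be the one immediately before $\tau$ is applied. Then $\tau$ swaps the two pivot entries $b$ and $b'$, and $\tau.DC'$ is again HHL, so $\tau$ is a legal transposition between pivot entries with respect to the HHL configuration $DC'$. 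By the previous lemma this forces the Non-overlapping Condition to fail, contradicting the hypothesis. This completes the argument.

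The main point requiring care is the bookkeeping of the first step: checking that the explicit form of the transpositions in $\beta$ really does make every non-pivot entry a leaf of $G_\sigma$ and therefore every component a star centred at a pivot, and checking that "pivot entry" is being used consistently (as a fixed element of $\widehat{C}C'$, independent of which intermediate configuration one looks at), so that the previous lemma applies verbatim to the configuration $DC'$. By contrast the delicate interplay between the weakly-increasing-rows condition and the interval disjointness is already packaged inside that lemma, so no new position tracking is needed here.
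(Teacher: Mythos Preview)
Your proof is correct and follows the same two-step logic as the paper: use Lemma~4.1 (contrapositively, via the legality of each step established in Proposition~4.2) to rule out pivot--pivot transpositions in $\sigma$, then deduce that each cycle carries exactly one pivot. The paper compresses both steps into two sentences, whereas your graph $G_\sigma$ and the star-decomposition argument make explicit why ``every transposition is pivot--nonpivot'' together with ``nonpivots have degree $\le 1$'' forces the claimed cycle structure---a point the paper asserts without justification.
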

\begin{proof}
If the Non-overlapping Condition is satisfied then pivot entries must be switched with non-pivot entries by Lemma 4.1. This implies that each disjoint cycle of $\sigma$ has exactly one pivot.
\end{proof}
\begin{prop}
If the Non-overlapping Condition is satisfied then $(b_{j_{i}}<a_{i})$ is the only possible legal transposition involving $a_{i}$ which can be applied to $CC'$.
\end{prop}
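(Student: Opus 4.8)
The plan is to show that among the row-$i$ transpositions $(b_{j_i}<a_i),(b_{j_i+1}<a_i),\dots,(b_p<a_i)$ of $\beta$, only the first can be legally applied to $CC'$; equivalently, to rule out $(b_m<a_i)$ for every $m$ with $j_i<m\le p$. First I would dispose of the trivial cases $p\le 1$, where row $i$ of $\beta$ is either empty or consists of the single transposition $(b_{j_i}<a_i)$, and so assume $p\ge 2$. Under the Non-overlapping Condition the intervals $[b_1,d_1],\dots,[b_p,d_p]$ are pairwise disjoint, and since $b_1>b_2>\dots>b_p$ while $b_j\le d_j$ for every pivot row $q_j$, a one-line argument shows that $d_k<b_j$ whenever $j<k$: indeed $b_j\in[b_j,d_j]$ and $b_j>b_k$, so the only way to have $b_j\notin[b_k,d_k]$ is $b_j>d_k$. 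The same remark shows that a pivot row with $d=\infty$ must carry the top pivot entry $b_1$, so for $m\ge 2$ the value $d_m=C'(q_m)$ is a genuine entry of $C'$. Combining these, for every $m$ with $j_i<m\le p$ one gets the \emph{key inequality} $d_m<b_{j_i}<a_i$, the last step because $b_{j_i}$ is by definition the largest pivot entry strictly below $a_i$.

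Next I would pin down where $b_m$ sits inside $C$. By Proposition 4.2 there is a unique $\sigma\in\mathcal{A}(\beta)$ with $\sigma.\widehat C=C$; and since we are applying a row-$i$ transposition to $CC'$ in the course of generating the tree from the root $\widehat C$ by reading $\beta$ in its total order $\preceq$, the sequence $\sigma$ uses transpositions only from rows $1,\dots,i-1$ of $\beta$, hence only the values $a_l$ with $l<i$, all of which satisfy $a_l>a_i$. I claim $\sigma$ fixes $b_m$, so that $b_m$ still occupies its root row $q_m$ in $C$. If not, consider the first transposition of $\sigma$ that moves $b_m$; by Lemma 4.1 no step of $\sigma$ swaps two pivot entries (such a step would be legal and would violate the Non-overlapping Condition), so this transposition has the form $(b_m<a_l)$ with $a_l$ a non-pivot entry. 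At the moment it is applied $b_m$ is still at $q_m$, so the step places $a_l$ to the left of row $q_m$, and legality of that step — each step of the canonical generating sequence is legal, as established inside the proof of Proposition 4.2 — forces $a_l\le C'(q_m)=d_m$. But $a_l>b_m$ by the form of the transposition, while the key inequality gives $d_m<a_i$; hence $a_l<a_i$, i.e.\ $l>i$, contradicting $l<i$. Therefore $b_m$ occupies row $q_m$ in $C$.

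Finally, applying $(b_m<a_i)$ to $CC'$ simply places $a_i$ to the left of row $q_m$; for the result to be weakly increasing in rows we would need $a_i\le C'(q_m)=d_m$, which contradicts the key inequality $d_m<a_i$. Hence $(b_m<a_i)$ is illegal on $CC'$ for every $m>j_i$, leaving $(b_{j_i}<a_i)$ as the only candidate. I expect the middle step — proving that $b_m$ never moves — to be the main obstacle: it is there that one must simultaneously use the per-step legality of the canonical generating sequence, the exclusion of pivot-pivot swaps furnished by Lemma 4.1, and the interval ordering $d_m<b_{j_i}$ coming from the Non-overlapping Condition; it is also the step where the hypothesis that $(b_m<a_i)$ ``can be applied to $CC'$'' is genuinely needed, since it is what forces every value appearing in $\sigma$ to lie strictly above $a_i$.
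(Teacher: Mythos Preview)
Your argument is correct under the reading you adopt, but it takes a noticeably different route from the paper's.

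The paper argues directly: if some $(b<a_i)$ with $b<b_{j_i}$ is legal on $CC'$, then the entry to the right of $b$ is at least $a_i$, while the entry to the right of $b_{j_i}$ is at least $b_{j_i}$ simply because $CC'$ is HHL; this already makes the pivot--pivot swap $(b<b_{j_i})$ legal on $CC'$, and Lemma~4.1 yields the contradiction. (The paper phrases the second fact as ``the entry to the right of $b_{j_i}$ is larger than $a_i$,'' which is more than is needed; the weaker inequality $\ge b_{j_i}>b$ suffices and follows from the weakly-increasing row condition.) This argument requires nothing about how $CC'$ was produced---it works for an arbitrary HHL two-column configuration satisfying the Non-overlapping Condition.

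Your approach instead pins down the row of $b_m$ in $C$: using that the canonical generating sequence $\sigma$ for $CC'$ uses only rows $1,\dots,i-1$ of $\beta$, together with Lemma~4.1 and the ordering $d_m<b_{j_i}<a_i$, you show $b_m$ was never moved and hence still sits next to $d_m<a_i$, making $(b_m<a_i)$ illegal. This is a fine argument, but it buys the result only for $CC'$ at the appropriate stage of the generation tree (which is all that the Remark after Proposition~4.4 needs). The paper's two-line construction of a legal pivot--pivot swap is both shorter and strictly more general, since it never appeals to the tree structure or to where the pivots sit in $C$. Your ``main obstacle''---tracking $b_m$ through $\sigma$---can thus be bypassed entirely.
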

\begin{proof}
Suppose there exists another pivot entry $b$ less than $b_{j_{i}}$ such that $(b<a_{i})$ is legal with respect to $CC'$. This implies the entry to the right of $b$ is larger than $a_{i}$. In addition, the entry to the right of $b_{j_{i}}$ is larger than $a_{i}$. This implies there exists a legal transposition between pivot entries in $CC'$ which by Lemma 4.1 implies the Non-overlapping Condition is not satisfied.
\end{proof}

We now turn our attention to the generation tree for a two column configuration. We then extend it an entire SSYT.
\newline\textbf{Generation Tree Construction: }
\begin{enumerate}
\item Begin with $\widehat{C}C'$ as the root of our tree. Start with the smallest transposition in $\beta$ with respect to $\preceq$.
\item For a transposition $(p<q)$ and for all nodes with no children $CC'$, do the following: If $(p<q)$ is legal then we can apply it to $CC'$ or not. If we apply it, add a node corresponding to the filling $(p<q).CC'$, and connect this node to the parent with an edge labeled $(p<q)$. If we do not apply $(p<q)$ then add a node corresponding to $CC'$ and connect this node to the root with an edge labeled $\overline{(p<q)}$. If $(p<q)$ is not legal then add a node corresponding to $CC'$ and connect this node to the parent with an edge labeled $\overline{(p<q)}$. 
\item Go to the next largest transposition in $\beta$ and repeat (2) until we have considered all transpositions in $\beta$.
\end{enumerate}
\indent Starting at the right most column we can iterate this procedure from right to left appending columns on the left. Once we have gone through all of the columns, we assign $t$-coefficients to the leaves of this generation tree for which our statistics on HHL fillings are defined. The branching of the generation tree is determined by whether or not we applied a transposition, so fillings with the same parent node are paired and their $t$-coefficients are added. This sum is then assigned as the $t$-coefficient of the parent node. For examples of this construction, refer to the Appendix. 
\subsection{Proof of compression}
In this section we perform the proof of compression. Once the generation tree is constructed and $t$-coefficients are assigned to the leaves of this tree, we perform a breadth-first tree traversal, summing the $t$-coefficients associated to each child of a given node, and assign that sum to the parent node. 
\newline\indent We first give a number of useful lemmas regarding the behavior of the inversion and descent statistics after transpositions from $\beta$ are applied to the corresponding column. We pair fillings that differ by a single transposition in the left column. The first step is to compress fillings which have larger entries in identical position. This amounts to fixing an initial subsequence $\sigma\in\mathcal{A}(\beta)$ and performing the compression for all HHL fillings $CC'=\sigma\gamma.\widehat{C}C'$ where $\sigma\gamma \in \mathcal{A}(\beta)$. Once we have this partial result, we extend it to an entire column, and further to an arbitrary number of columns. Finally, we are able to express the sum of all HHL fillings which sort to the same SSYT in terms of a single canonical HHL filling which lies in the preimage of the SSYT. 
\newline\indent Refer to Definition 3.2 for the definitions of the inversion and descent statistic. We have the following flowchart indicating the sequence of implications that lead to the main theorem. 
\begin{tikzpicture}[node distance=0cm]
\node(1)[yshift=1cm,xshift=1cm]{\text{Lemma 4.3}};
\node(2)[xshift=4cm]{\text{Proposition 4.7, Proposition 4.8}};
\node(3)[yshift=-1cm,xshift=-2cm]{\text{Lemma 4.4}};
\node(4)[yshift=-1cm,xshift=1cm]{\text{Lemma 4.5}};
\node(5)[xshift=9cm]{\text{Proposition 4.9}};
\node(6)[yshift=-1cm,xshift=4cm]{\text{Lemma 4.6}};
\node(7)[yshift=-2cm,xshift=4cm]{\text{Lemma 4.7}};
\node(8)[yshift=-3cm,xshift=4cm]{\text{Lemma 4.8}};
\node(9)[yshift=-2cm,xshift=9cm]{\text{Proposition 4.10}};
\node(10)[yshift=-4cm,xshift=9cm]{\text{Theorem 4.1}};
\draw[->] (1) edge (2);
\draw[->] (3) edge (4);
\draw[->] (4) edge (2);
\draw[->] (2) edge (5);
\draw[->] (6) edge (9);
\draw[->] (7) edge (9);
\draw[->] (8) edge (9);
\draw[->] (5) edge (9);
\draw[->] (9) edge (10);
\end{tikzpicture}
\newline\indent The following notation applies to the rest of this subsection. Let $T$ be an HHL filling. We read the columns of $T$ from left to right. Define $T[a,b]$ to be the section of $T$ comprised of columns $a$ through $b$ (inclusive) of $T$. Denote the left-most column by $C$ and denote the column to its right by $C'$, where each column is viewed as both a set and a sequence. Recall that $\widehat{C}$ is a permutation of $C$ whose construction is dictated by $C'$ as in Definition 4.1. Let $T'$ be the HHL filling composed of the columns strictly to the right of $C'$. 
\newline\indent Given a generation tree with root $\widehat{C}C'$, let $CC'=\sigma.\widehat{C}C'$ which is uniquely determined by Proposition 4.2. Let $(p_{*}<q_{*})$ be the transposition labeling the edge immediately above the highest $CC'$ in the generation tree, i.e. $(p_{*}<q_{*})=\sigma_{k}$ for $\sigma=\sigma_{1}\ldots\sigma_{k}$.  Let $r(a)$ denote the row of $a$ in $C$. Define $C'_{>r(a)}$ to be the subsequence of $C'$ consisting of all entries below $r(a)$. Analogously define $C'_{<r(a)}$. 

\begin{lemma}
Given two columns $CC'$, and a legal transposition $(p<q) \succ (p_{*}<q_{*})$ where $q$ is a non-pivot entry, we have $\emph{des}((p<q)CC')=\emph{des}(CC')+1$.
\end{lemma}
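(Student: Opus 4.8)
The plan is to analyze carefully what happens to the column $C$ when the legal transposition $(p<q)$ is applied, and to compare descents row by row. Write $CC'$ for the two-column HHL configuration and let $i:=r(p)$ and $j:=r(q)$ be the rows of $p$ and $q$ in $C$; since $(p<q)$ is a transposition of entries with $p<q$, swapping them interchanges the contents of rows $i$ and $j$. The descent statistic $\textrm{des}(CC')$ counts cells $u=(a,1)$ with $\sigma(u)<\sigma(\textrm{r}(u))$, i.e. rows where the left-column entry is strictly less than the right-column entry $C'(a)$. So I would show that exactly one such strict inequality is created and none is destroyed, using the hypothesis that $q$ is a non-pivot entry and that $(p<q)$ is legal.

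First I would establish the local effect. Because $q$ is a non-pivot entry of $C$, by definition of pivot we have $C(j)=q=C'(j)$ in the configuration $CC'$ (or $C'(j)$ does not exist and $q$ is at the bottom of the longer column — this edge case I would handle separately, noting that then row $j$ contributes nothing to $\textrm{des}$ before the swap and the argument only needs the row-$i$ analysis). After applying $(p<q)$, row $j$ now holds $p$ on the left with $C'(j)=q$ on the right, and since $p<q$ this row becomes a strict ascent: that is one new descent. Meanwhile row $i$ now holds $q$ on the left; I must check that row $i$ was \emph{not} previously a descent and is not one afterward. Legality of $(p<q)$ means $(p<q)CC'$ is HHL, so in particular the new row $i$ satisfies the weakly-increasing-rows condition: $q\le x$ where $x$ is the entry of $C'$ to the right of row $i$ (if it exists). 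But the HHL condition only gives $q\le x$, not $q<x$; to conclude row $i$ is not a descent after the swap I would argue that in fact $q = C'(i)$ cannot happen unless... — here I would use the structure of $\widehat{C}$ and the sequence $\beta$, together with the hypothesis $(p<q)\succ(p_*<q_*)$, to pin down that the entry to the right of row $i$ equals $q$'s partner. The cleanest route: since $(p<q)$ is a legal transposition in $\beta$, $p$ is a pivot entry, and the only pivot entries sitting in a given row are determined by $C'$; one shows $C'(i)=q$ because $p$ occupies the pivot row whose right entry is $q$ (this is exactly the defining setup of $\beta$, where the transposition $(b_{j_i+k}<a_l)$ moves a pivot entry into the slot aligned with $a_l=C'$-entry). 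Hence before the swap row $i$ had $p<C'(i)=q$ — a descent — and after the swap row $i$ has $q=C'(i)$ — an equality, not a descent. Net change from rows $i$ and $j$: we lost the descent at row $i$ but gained one at row $j$ from the pivot move, and we gained the one asserted — so I need to recount to land on exactly $+1$; the correct bookkeeping is that the pivot move realigns $p$ (losing a descent at $i$) while $q$ lands in row $i$ creating no descent there, and $q$'s former row $j$ gains a descent, for a total that I expect to be $+1$ once the non-pivot hypothesis on $q$ is used to rule out compensating changes.

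Second I would check that no other row changes its descent status: the only rows whose left entries move are $i$ and $j$, so every other row $a$ has the same pair $(C(a),C'(a))$ before and after, hence the same contribution to $\textrm{des}$. Combining the local analysis of rows $i$ and $j$ with this stability gives $\textrm{des}((p<q)CC')=\textrm{des}(CC')+1$.

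The main obstacle I anticipate is the row-$i$ analysis: correctly identifying, from the combinatorics of $\beta$ and the root $\widehat{C}$, exactly which right-column entry sits to the right of the pivot entry being moved, and handling the boundary case where one column is longer (so $C'(i)$ or $C'(j)$ may fail to exist). I would address this by invoking the explicit description of $\beta$ and Definition 4.1, and by treating the $c=c'+1$ case as in the proof of the earlier proposition, where the extra bottom box is omitted and the remaining argument applies verbatim.
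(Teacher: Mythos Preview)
Your overall strategy---isolate the two affected rows $i=r(p)$ and $j=r(q)$ and note that all other rows are untouched---matches the paper's approach, and your analysis of row $j$ is correct: since $(p<q)\succ(p_*<q_*)$ lies beyond the last transposition used to build $C$ and $q$ is non-pivot, $q$ has not been moved, so $C(j)=q=C'(j)$; after the swap row $j$ reads $(p,q)$ with $p<q$, creating one new descent there. The paper's proof is essentially this one sentence.

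The gap is in your row-$i$ analysis. You assert that $C'(i)=q$, arguing from the shape of $\beta$ that the pivot $p$ sits in the row whose right entry is $q$. That is false: a transposition $(b<a)\in\beta$ has $b$ a pivot entry and $a$ an arbitrary entry of $C$, with no relation between the row currently occupied by $b$ and the value $a$. In fact $C'(i)=q$ is impossible: since $q$ is non-pivot we already have $q=C'(j)$, and entries of $C'$ are pairwise distinct (same-column cells attack), so $C'(i)=q$ would force $i=j$. Combined with legality ($q\le C'(i)$) this yields $q<C'(i)$ strictly, so row $i$ \emph{remains} a descent after the swap. It was also a descent before: left-column pivot entries of $\widehat{C}$ are precisely the elements of $S\setminus C'$, so $p\notin C'$, hence $p\ne C'(i)$ and thus $p<C'(i)$. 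Therefore row $i$ contributes a descent both before and after, row $j$ contributes a descent only after, and the net change is exactly $+1$. This is why your bookkeeping kept landing on $0$: you were subtracting a descent at row $i$ that is not actually lost. (The boundary case where $i$ is the extra bottom row when $c=c'+1$ causes no trouble: row $i$ then contributes nothing to $\mathrm{des}$ either way.)
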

\begin{proof}
Since $(p<q)$ is after the last transposition used to generate $C$ and $q$ is a non-pivot entry then $q$ has not been moved. Therefore by switching the pivot entry $p$ with the non-pivot entry $q$, we add a descent in row $r(q)$.
\end{proof}
Define,
\[
N_{a,b}[C'_{>r(a)}\cap C]=\#\{ k \text{ }|\text{ } k\in C'_{>r(a)}\cap C, \text{ } a<k<b\}
\]
\begin{lemma}
Given two columns $CC'$ and a legal transposition $(p<q) \succeq (p_{*}<q_{*})$, such that $r(p)<r(q)$. We have $\emph{inv}((p<q)CC')=\emph{inv}(CC')+N_{p,q}[C'_{>r(p)}\cap C'_{<r(q)}]$
\end{lemma}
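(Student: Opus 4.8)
The plan is to compute $\inv((p<q)CC') - \inv(CC')$ by a direct local analysis. Write $r_p := r(p)$ and $r_q := r(q)$, so $r_p < r_q$ by hypothesis and the transposition $(p<q)$ merely exchanges the entry $p$ in row $r_p$ with the entry $q$ in row $r_q$ of $C$, leaving $C'$ untouched. Recalling (Definition 3.2) that an inversion of the two-column filling $CC'$ is a pair of rows $i < j$ — with the right-column entry $C'(i)$ in the strictly higher row and the left-column entry $C(j)$ in the lower one — satisfying $C(j) < C'(i) < C'(j)$ (the last inequality omitted when row $j$ has no cell in $C'$, which here occurs only for $j = |C'|+1$ when $|C| = |C'|+1$), the only inversions that can be created or destroyed are those whose left cell lies in row $r_p$ or in row $r_q$. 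Hence $\inv((p<q)CC') - \inv(CC')$ equals the sum of the ``new minus old'' contribution of row $r_p$ (value passing from $p$ to $q$) and of row $r_q$ (value passing from $q$ to $p$); note $r_p \le |C'|$ automatically, so only row $r_q$ can be the exceptional bottom row.

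The second step is to collapse these terms using the hypotheses. HHL-ness of $CC'$ yields two facts: the down-right-diagonal attacking condition between the cell in row $r_q$ of $C$ and the cells of $C'$ strictly above it gives $q \ne C'(i)$ for every $i < r_q$; and weak increase of rows gives $q = C(r_q) \le C'(r_q)$. Legality of $(p<q)$, i.e.\ HHL-ness of $(p<q)CC'$, supplies the key inequality $q \le C'(r_p)$, again from weak increase of rows. Since $p < q$, the ``old'' set for row $r_p$ contains the ``new'' one and the ``new'' set for row $r_q$ contains the ``old'' one, so two of the terms partially cancel; in what survives, the constraint $C'(i) < q$ together with $q \le C'(r_p)$ (resp.\ $q \le C'(r_q)$) renders the auxiliary bound $C'(i) < C'(r_p)$ (resp.\ $C'(i) < C'(r_q)$) automatic, and $q \ne C'(i)$ upgrades $C'(i) \le q$ to $C'(i) < q$; the degenerate row-$r_q$ term, when present, needs no upper bound and is handled identically. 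What remains telescopes to $\#\{\, i : r_p \le i < r_q,\ p < C'(i) < q \,\}$.

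The third step is to discard the boundary index $i = r_p$, which would require $C'(r_p) < q$, contradicting $q \le C'(r_p)$. So the difference equals $\#\{\, i : r_p < i < r_q,\ p < C'(i) < q \,\}$, and since $C'$ has pairwise distinct entries (same-column attacking condition) a value lies in $C'_{>r(p)} \cap C'_{<r(q)}$ precisely when it occupies a row strictly between $r_p$ and $r_q$; thus this count is exactly $N_{p,q}[C'_{>r(p)} \cap C'_{<r(q)}]$, as claimed.

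I expect the main obstacle to be careful bookkeeping rather than any real difficulty: one must track the ``if it exists'' clause of the inversion statistic (hence treat $r_q = |C'|+1$ separately) and invoke both flavors of distinctness coming from the HHL attacking conditions — the within-column one (to turn $\le$ into $<$ and to identify the final count with $N_{p,q}[\cdots]$) and the down-right-diagonal one (to get $q \ne C'(i)$ for $i < r_q$) — since these, together with the two weak-increase inequalities $q \le C'(r_p)$ and $q \le C'(r_q)$, are precisely what collapse the half-open interval conditions and eliminate the $i = r_p$ term. The ordering hypothesis $(p<q) \succeq (p_*<q_*)$ is not needed for this computation beyond ensuring $(p<q)$ is the transposition applied at this stage with the stated rows in $C$.
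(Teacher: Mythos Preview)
Your proof is correct and follows essentially the same approach as the paper's: a direct local analysis of which inversions involving rows $r_p$ and $r_q$ are destroyed, preserved, or created by the swap. Your version is more explicit about the bookkeeping --- in particular, you isolate exactly where the legality inequality $q\le C'(r_p)$, the row inequality $q\le C'(r_q)$, and the attacking condition $q\ne C'(i)$ for $i<r_q$ are invoked, and you handle the degenerate bottom-row case --- whereas the paper's argument states the same bijection between old and new inversions more informally.
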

\begin{proof}
If there exists an entry, $s$ in $C'$, which participates in an inversion with respect to row $r(q)$ in $CC'$, then it remains an inversion with respect to row $r(p)$ in $(p<q)CC'$ since $p<q$. If $s$ participates in an inversion with respect to row $r(p)$ in $CC'$ then it remains an inversion with respect to row $r(p)$ or $r(q)$ in $(p<q)CC'$ depending on the value of $s$. If $s<q$ then the inversion is with respect to $r(p)$ in $(p<q)CC'$. If $s>q$ then the inversion is with respect to $r(q)$ in $(p<q)CC'$. Finally, if $p<s<q$ and lies below $r(p)$, but above $r(q)$ in $CC'$ then we gain an inversion with respect to row $r(p)$ in $(p<q)CC'$. The number of such entries which satisfy this criterion is $N_{p,q}[C'_{>r(p)}\cap C'_{<r(q)}]$. 
\end{proof}
Furthermore, define
\[
m(CC',q)=\sum_{\substack{a\in C\setminus C'}}N_{a,\min(C'(r(a)),q)}[C'_{>r(a)}\cap C].
\] 
\begin{lemma}
Given two columns $CC'$, a legal transposition $(p<q)\succeq (p_{*}<q_{*})$ as above such that $r(p)<r(q)$, and that there are no legal switches between pivots after $(p<q)$ in $\beta$, then 
\[
\emph{inv} ( (p<q)CC'T')+m((p<q)CC',q)=\emph{inv} ( CC'T')+m(CC',q).
\]
\end{lemma}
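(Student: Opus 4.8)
The plan is to derive this from the inversion-change identity of the preceding lemma, together with a direct analysis of how the auxiliary statistic $m(\cdot,q)$ changes under the transposition $(p<q)$. First I would observe that applying $(p<q)$ alters only the column $C$ of the filling $CC'T'$, so the only inversions of $CC'T'$ whose status can change are those between a cell of $C$ and a cell of $C'$; an inversion between $C'$ and $T'$, or one internal to $T'$, has defining inequalities that read off entries of $C'$ and of $T'$ only, and is therefore unaffected. Hence $\mathrm{inv}((p<q)CC'T')-\mathrm{inv}(CC'T')=\mathrm{inv}((p<q)CC')-\mathrm{inv}(CC')$, which by the preceding lemma equals $N_{p,q}[C'_{>r(p)}\cap C'_{<r(q)}]$. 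Thus the lemma reduces to the purely two-column identity $m(CC',q)-m((p<q)CC',q)=N_{p,q}[C'_{>r(p)}\cap C'_{<r(q)}]$.

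Next I would localize the change in $m$. Since $(p<q)C$ and $C$ have the same underlying set of entries and differ only in interchanging the rows occupied by $p$ and $q$, every summand $N_{a,\min(C'(r(a)),q)}[C'_{>r(a)}\cap C]$ of $m$ with $a\notin\{p,q\}$ is literally unchanged. The summand with $a=q$ vanishes on both sides: the entry $q$ sits in the row $r(q)$ of $C$, where weak increase of $CC'$ forces $q\le C'(r(q))$, so $\min(C'(r(q)),q)=q$ and $N_{q,q}[\cdots]=0$; on the other side $q$ sits in the row $r(p)$ of $(p<q)C$, where legality of $(p<q)$ forces $q\le C'(r(p))$, with the same conclusion. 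Since $p$, being a pivot entry, lies in $C\setminus C'$, the only surviving contribution comes from $a=p$; using $q\le C'(r(p))$ and $q\le C'(r(q))$ once more to replace both caps by $q$, one gets that $m(CC',q)-m((p<q)CC',q)$ equals $N_{p,q}[C'_{>r(p)}\cap C]-N_{p,q}[C'_{>r(q)}\cap C]=N_{p,q}\bigl[(C'_{>r(p)}\setminus C'_{>r(q)})\cap C\bigr]$, i.e. the number of rows $i$ with $r(p)<i\le r(q)$, $C'(i)\in C$, and $p<C'(i)<q$.

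Then I would match this with the target quantity $N_{p,q}[C'_{>r(p)}\cap C'_{<r(q)}]$, which counts the rows $i$ with $r(p)<i<r(q)$ and $p<C'(i)<q$. The boundary row $i=r(q)$ is irrelevant to both counts, since $C'(r(q))\ge q$ lies outside $(p,q)$; so the two counts coincide as soon as one knows that every $C'(i)$ with $r(p)<i<r(q)$ and $p<C'(i)<q$ already occurs in $C$. This is exactly where the hypothesis that no legal switch between pivot entries occurs after $(p<q)$ in $\beta$ enters. If such a value $C'(i)=:k$ were not an entry of $C$, then, $k$ not being among the entries filling $\widehat C$, the construction of $\widehat C$ puts some entry $b<k$ into row $i$; thus $i$ is a pivot row of $\widehat C C'$, with pivot entry $b<k<q$ and with $C'(i)=k$ as its $d$-value. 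Since $p$ is itself a pivot entry below $q$ and $C'(r(p))\ge q>k$, the pivot interval attached to $b$ and the one attached to $p$ both contain $k$ and hence overlap, violating the Non-overlapping Condition; by Lemma 4.1 this exhibits a legal transposition between pivot entries, and one verifies it occurs after $(p<q)$ in $\beta$ — a contradiction. Therefore every such $C'(i)$ lies in $C$, the two counts are equal, and combining this with the first two steps proves the lemma.

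The step I expect to be the main obstacle is the last one: converting the assumption $C'(i)\notin C$ into an explicit legal pivot–pivot transposition situated after $(p<q)$ in $\beta$. This requires keeping careful track of which row each relevant pivot entry occupies in $C$ as opposed to in $\widehat C$ (so that the legality of the resulting switch can be tested against $CC'$ and not merely against $\widehat C C'$), of whether $a_l=q$ is itself a pivot entry, and of the precise ordering of transpositions in $\beta$. Two edge cases also need care: when $r(q)$ exceeds the length of $C'$, so that $C'(r(q))$ must be read as $+\infty$ and the argument that the boundary row contributes nothing must be rephrased accordingly; and when one of the pivot entries in play is the bottom-row pivot entry $\widehat C(c'+1)$, for which the claim $p\notin C'$ has to be read directly off the construction of $\widehat C$. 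The remaining manipulations are routine exponent- and sign-bookkeeping.
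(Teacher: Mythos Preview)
Your plan is essentially the paper's proof: reduce the inversion change to Lemma~4.4, localize the change in $m$ to the $a=p$ summand (handling $a=q$, if present, as a vanishing term on both sides), and then show $N_{p,q}[C'_{>r(p)}\cap C'_{<r(q)}]=N_{p,q}[C'_{>r(p)}\cap C'_{<r(q)}\cap C]$ using the no-legal-pivot-switch hypothesis. The bookkeeping in your first two steps is correct and matches the paper's (your explicit treatment of the $a=q$ summand is in fact a bit more careful than the paper's blanket ``$a\neq p$ terms are unchanged'').

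The one genuine gap is in your final step. You argue that overlapping pivot intervals violate the Non-overlapping Condition, and then write ``by Lemma~4.1 this exhibits a legal transposition between pivot entries.'' But Lemma~4.1 is the implication \emph{legal pivot switch $\Rightarrow$ Non-overlapping fails}; you are using the converse, which is neither stated nor proved. Moreover, the pivot interval attached to $p$ is defined via the row of $p$ in $\widehat C$, not its current row $r(p)$ in $C$, so the inequality $C'(r(p))\ge q>k$ does not yet show the two intervals meet. The paper avoids this detour and constructs the pivot-pivot switch directly, exactly as your ``main obstacle'' paragraph anticipates: if $y:=C'(i)\in C'\setminus C$ with $r(p)<i<r(q)$ and $p<y<q$, let $y'$ be the pivot entry of $\widehat C$ paired with $y$. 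Then $y'$ still occupies row $i$ in $C$, since any other entry $x$ moved into row $i$ would have been moved before $q$ in the generation order, hence satisfy $x>q>y$, contradicting weak increase of row $i$. With $y'$ in row $i$, swapping $y'$ and $p$ is legal (both target rows have right-neighbor $\ge q>y'$ and $\ge y>p$), is between pivot entries, and lies after $(p<q)$ in $\beta$ since $y'<y<q$. That is the contradiction you want; replace your appeal to Lemma~4.1 with this direct construction and the proof goes through.
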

\begin{proof}
The left hand side of the equation is,
\[
\text{inv} ( (p<q)CC'T')+m((p<q)CC',q).
\]
By direct substitution for $m((p<q)CC',q')$ and using Lemma 4.4 gives, 
\[
\text{inv}(CC'T')+N_{p,q}[C'_{>r(q)}\cap C'_{<r(p)}]+\sum_{\substack{a\in C\setminus C'}}N_{a,\min(C'(r(a)),q)}[C'_{>r(a)}\cap (p<q)C].
\]
We can consider the term in the sum where $a=p$ separately, which gives
\[
\text{inv}(CC'T')+N_{p,q}[C'_{>r(q)}\cap C'_{<r(p)}]+N_{p,\min(C'(r(p)),q)}[C'_{>r(p)}\cap (p<q)C]+
\]
\[
\sum_{\substack{a\in C\setminus C'\\a\neq p}}N_{a,\min(C'(r(a)),q)}[C'_{>r(a)}\cap (p<q)C].
\]
\indent It is important to note that the row indices are with respect to $(p<q)C$. We next convert to row indices that are with respect to $C$. Notice that switching $p$ and $q$ does not affect the terms in the sum corresponding to $a\neq p$ because 
\[
\{ k \text{ }|\text{ } k\in C'_{>r(a)}\cap C, \text{ } a<k<\min(C'(r(a)),q)\}=
\]
\[
\{ k \text{ }|\text{ } k\in C'_{>r(a)}\cap (p<q)C, \text{ } a<k<\min(C'(r(a)),q)\}.
\]
So we can substitute $C$ for $(p<q)C$ in the summation. In addition, 
$$N_{p,\min(C'(r(p)),q)}[C'_{>r(p)}\cap (p<q)C]=N_{p,q}[C'_{>r(q)}\cap C],$$ 
which follows from the facts that $\min(C'(r(p)),q)=q$ since $(p<q)$ is a legal transposition and $r(p)$ with respect to $(p<q)C$ equals $r(q)$ with respect to $C$. So the above sum can be reinterpreted with respect to $C$ as,
\[
\text{inv}(CC'T')+N_{p,q}[C'_{>r(p)}\cap C'_{<r(q)}]+N_{p,q}[C'_{>r(q)}\cap C]+
\]
\[
\sum_{\substack{a\in C\setminus C'\\a\neq p}}N_{a,\min(C'(r(a)),q)}[C'_{>r(a)}\cap C].
\]
\indent Furthermore, $N_{p,q}[C'_{>r(p)}\cap C'_{<r(q)}]=N_{p,q}[C'_{>r(p)}\cap C'_{<r(q)}\cap C]$ since if there exists a $y \in C'\setminus C$ between rows $r(p)$ and $r(q)$ with respect to $C$ such that $p<y<q$ then there exists a legal switch between pivots, $(y'<p)$, where $y'$ is the entry in $C$ paired with $y$ in $\widehat{C}$. To see this, we first show that $y'$ has not been moved. Suppose an entry $x\neq y'$ is to the left of $y$. We have $x>q>y$, where the first inequality follows from $x$ being moved before $q$, and the second inequality comes from the assumption on $y$. This is a contradiction since $x$ is to the left $y$ and $CC'$ is HHL. Therefore $y'$ is in the same row as $y$ which implies $(y'<p)$ is a legal transposition of pivots after $(p<q)$. So there exists no such $y$, and we can conclude the identity. 
\newline\indent After substituting $N_{p,q}[C'_{>r(p)}\cap C'_{<r(q)}\cap C]$ for $N_{p,q}[C'_{>r(p)}\cap C'_{<r(q)}]$, we get,
\[
\text{inv}(CC'T')+N_{p,q}[C'_{>r(p)}\cap C'_{<r(q)}\cap C]+N_{p,q}[C'_{>r(q)}\cap C]+
\]
\[
\sum_{\substack{a\in C\setminus C'\\a\neq p'}}N_{a,\min(C'(r(a)),q)}[C'_{>r(a)}\cap C].
\]
Combine $N_{p,q}[C'_{>r(p)}\cap C'_{<r(q)}\cap C]+N_{p,q}[C'_{>r(q)}\cap C]$ to get, 
\[
\text{inv}(CC'T')+N_{p,q}[C'_{>r(p)}\cap C]+
\sum_{\substack{a\in C\setminus C'\\a\neq p}}N_{a,\min(C'(r(a)),q)}[C'_{>r(a)}\cap C].
\]
Bring $N_{p,q}[C'_{>r(p)}\cap C]$ back into the sum to get,
\[
\text{inv}(CC'T')+
\sum_{\substack{a\in C\setminus C'}}N_{a,\min(C'(r(a)),q)}[C'_{>r(a)}\cap C].
\]
By direct substitution, the above sum gives the Lemma.
\end{proof}
We now define the sum of the terms beneath a node in the generation tree. This is the sum of all $t$-coefficients which have entries less than $q$ in differing positions in the left column with respect to $CC'$.
\begin{definition} Define the sum at a node to be,
\[
S(CC'T',(p<q))=\sum_{\substack{\gamma \in \mathcal{A}(\beta_{(p<q)})\\ \gamma.CC'T'\text{ }\\ HHL}}(-1)^{\ell(\gamma.C)}t^{\emph{inv} (\gamma.CC'T')}(1-t)^{\emph{des}(\gamma.CC' T')}.
\]
\end{definition}
We have the following proposition. 
\begin{prop} Given two columns $CC'$ HHL, such that $\widehat{C}C'$ satisfies the Non-overlapping condition, and a transposition $(p<q)\succeq(p_{*}<q_{*})$ we have 
\[
S(CC'T',(p<q))=(-1)^{\ell(C)}t^{\emph{inv}(CC'T')+m(CC',q)}(1-t)^{\emph{des}(CC'T')}
\]
\end{prop}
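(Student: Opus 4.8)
The plan is to prove the identity by induction on the number of transpositions of $\beta$ lying strictly after $(p<q)$ -- equivalently, by downward induction on the $\preceq$-position of $(p<q)$, handling the transpositions of $\beta$ from the last one back toward $(p_{*}<q_{*})$. For the base case $(p<q)$ is the final transposition $(b_{p}<a_{c})$ of $\beta$, so $\beta_{(p<q)}$ is empty, the only admissible $\gamma$ is the empty sequence, and $S(CC'T',(p<q))=(-1)^{\ell(C)}t^{\text{inv}(CC'T')}(1-t)^{\text{des}(CC'T')}$. It then suffices to observe that $m(CC',a_{c})=0$: since $a_{c}$ is the smallest entry of $S$, hence of $C$, every $a\in C\setminus C'$ satisfies $a\ge a_{c}=q$, so $\min(C'(r(a)),q)\le q\le a$ and each counting set $\{k\in C'_{>r(a)}\cap C \;:\; a<k<\min(C'(r(a)),q)\}$ is empty.

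For the inductive step I would let $(p'<q')$ be the $\preceq$-minimal transposition of $\beta_{(p<q)}$, i.e.\ the one immediately following $(p<q)$. Reading off the generation tree, the ``skip $(p'<q')$'' and ``apply $(p'<q')$'' options both continue with the transpositions of the strictly shorter tail $\beta_{(p'<q')}$, from the configurations $CC'$ and $(p'<q').CC'$ respectively, so
\[
S(CC'T',(p<q)) = S(CC'T',(p'<q')) + S\bigl((p'<q').CC'T', (p'<q')\bigr),
\]
where the second summand is present only when $(p'<q')$ is legal for $CC'$; the inductive hypothesis applies to both summands (to the second because $\widehat{(p'<q').C}\,C'=\widehat C\,C'$ still satisfies the Non-overlapping condition, and $(p'<q')$ labels the edge above the highest $(p'<q').CC'$).

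If $(p'<q')$ is not legal and lies in the same row of $\beta$ as $(p<q)$, then $q'=q$ and the inductive hypothesis finishes it at once; if it is not legal but lies in the next row (so $q=a_{l}$ and $q'=a_{l+1}$) one is left needing $m(CC',a_{l+1})=m(CC',a_{l})$. If $(p'<q')$ is legal, then since under the Non-overlapping condition the only legal transposition involving $a_{l+1}$ is its row-leader $(b_{j_{l+1}}<a_{l+1})$ (Proposition~4.3), we have $(p'<q')=(b_{j_{l+1}}<a_{l+1})$, so $q'=a_{l+1}<a_{l}=q$, and by Lemma~4.1 the entry $q'$ is non-pivot (a legal swap of two pivot entries would violate Non-overlapping). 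Checking that the remaining hypotheses of Lemmas~4.3 and~4.5 hold here (in particular the positional hypothesis, which follows from the structure of the legal row-leader swaps under Non-overlapping), Lemma~4.3 gives $\text{des}((p'<q').CC'T')=\text{des}(CC'T')+1$, Lemma~4.5 gives the invariance $\text{inv}((p'<q').CC'T')+m((p'<q').CC',q')=\text{inv}(CC'T')+m(CC',q')$, and since a transposition of two distinct values in a column changes its inversion number by an odd amount, $(-1)^{\ell((p'<q').C)}=-(-1)^{\ell(C)}$. Feeding these into the two summands and using $t^{A}(1-t)^{D}-t^{A}(1-t)^{D+1}=t^{A+1}(1-t)^{D}$ with $A=\text{inv}(CC'T')+m(CC',q')$ and $D=\text{des}(CC'T')$, the right-hand side collapses to $(-1)^{\ell(C)}t^{\,\text{inv}(CC'T')+m(CC',q')+1}(1-t)^{\text{des}(CC'T')}$, so the identity reduces to $m(CC',a_{l})=m(CC',a_{l+1})+1$.

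Thus everything comes down to a single combinatorial statement, which I expect to be the main obstacle: for consecutive rows $l,l+1$ and any HHL configuration $CC'$ with $\widehat C\,C'$ Non-overlapping,
\[
m(CC',a_{l})-m(CC',a_{l+1}) = \begin{cases}1 & \text{if the row-leader }(b_{j_{l+1}}<a_{l+1})\text{ is legal for }CC',\\ 0 & \text{otherwise.}\end{cases}
\]
Unwinding the definition of $m$, the difference on the left counts entries $k\in C$ with $a_{l+1}\le k<a_{l}$ that lie below a suitable row; the plan is to show, from the construction of $\widehat C$, that such a $k$ exists precisely when the swap $(b_{j_{l+1}}<a_{l+1})$ can be carried out legally, with uniqueness of $k$ forced by the Non-overlapping condition (a second witness would produce a legal switch between pivot entries, contradicting Lemma~4.1). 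The algebraic collapse, the sign bookkeeping, and the verification of the lemma hypotheses are then routine.
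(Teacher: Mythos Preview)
Your approach is essentially the same as the paper's: downward induction along $\beta$, with the recursive splitting $S(CC'T',(p<q))=S(CC'T',(p'<q'))+S((p'<q').CC'T',(p'<q'))$, then invoking Lemmas~4.3 and~4.5 together with the sign change to collapse the two summands, leaving exactly the identity $m(CC',q)=m(CC',q')+1$ (respectively $=m(CC',q')$ in the illegal case).

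Two small remarks. First, the paper steps row-by-row in $\beta$ rather than transposition-by-transposition; this is equivalent to what you do, since under the Non-overlapping condition only the row-leader in each row can be legal (Proposition~4.3), so your ``same row, not legal'' case is just absorbing those trivial passes. Second, the step you flag as ``the main obstacle'' is in fact short once you note that $q=a_l$ and $q'=a_{l+1}$ are \emph{consecutive} elements of $C$: for $a\neq p'$ the term $N_{a,\min(C'(r(a)),\cdot)}[C'_{>r(a)}\cap C]$ is unchanged because $q'$ lies only in the pivot interval of $p'$ (Non-overlapping), and for $a=p'$ the counts differ by exactly the single element $q'\in C'_{>r(p')}\cap C$ when $(p'<q')$ is legal (legality gives $q'<C'(r(p'))$ and $r(q')>r(p')$), and by nothing when it is not. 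So your ``plan'' is already the whole argument; there is no hidden difficulty.
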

\textbf{Remark: } Note that the left hand side depends on $p$ while the right hand side does not. This follows from the fact that if the Non-overlapping Condition is satisfied then each entry in the left column lies in at most one pivot interval. So each row of $\beta_{(p<q)}$ has at most one transposition that can be applied to $C$. This eliminates the dependency on $p$. 
\begin{proof}
Suppose $(p<q)$ is the last transposition in $\beta$, so $\beta_{(p<q)}$ is empty. This implies 
\[
\sum_{\substack{\gamma \in \mathcal{A}(\beta_{(p<q)})\\ \gamma.CC'T'\text{ }\\ HHL}}(-1)^{\ell(\gamma.C)}t^{\text{inv} (\gamma.CC'T')}(1-t)^{\text{des}(\gamma.CC' T')}=(-1)^{\ell(C)}t^{\text{inv} ( CC'T')}(1-t)^{\text{des}(CC' T')},
\]
since there is only one term in this sum corresponding to $\gamma=\emptyset$. By the definition of $m(CC',q)$ we have,
\[
m(CC',q)=\sum_{\substack{a\in C\setminus C'}}N_{a,\min(C'(r(a)),q)}[C'_{>r(a)}\cap C].
\]
Note that $(p<q)$ is the transposition between the two smallest values of $C$ so we have
\[
N_{a,\min(C'(r(a)),q)}[C'_{>r(a)}\cap C]=0,
\]
for all $a \in C\setminus C'$.
\newline\indent For the induction step, let $(p'<q')$ be a transposition from the row directly below row $q$ in $\beta_{(p<q)}$. In other words, $q'<q$ and there does not exist an $x$ in $C$ such that $q'<x<q$. We can assume without loss of generality that $(p'<q')$ is a legal transposition. To see this, assume $(p'<q')$ is not legal. We have $S(CC'T',(p'<q'))=S(CC'T',(p<q))$ since $\{\gamma.CC'T':\text{ } \gamma\in\mathcal{A}(\beta_{(p'<q')})\}=\{\gamma.CC'T':\text{ } \gamma\in\mathcal{A}(\beta_{(p<q)})\}$. In addition, we have $m(CC',q')=m(CC',q)$ because $N_{a,\min(C'(r(a)),q')}[C'_{>r(a)}\cap C]=N_{a,\min(C'(r(a)),q)}[C'_{>r(a)}\cap C]$ for all $a\in C\setminus C'$. This follows from the fact that if $(p'<q')$ is not legal then either $q'$ is greater than the entry to the right of $p'$ or $q'$ is above $p'$. 
\newline\indent We have, 
\[
S(CC'T',(p<q))=S(CC'T',(p'<q'))+S((p'<q')CC'T',(p'<q')).
\]
This is the sum of the two nodes directly below $CC'T'$, and corresponds to whether we applied the transposition $(p'<q')$ to $CC'T'$. 
By induction, 
\[
S(CC'T',(p'<q'))+S((p'<q')CC'T',(p'<q'))=
\]
\[
(-1)^{\ell(C)}t^{\text{inv} ( CC'T')+m(CC',q')}(1-t)^{\text{des}(CC' T')}+
\]
\[
(-1)^{\ell((p'<q')C)}t^{\text{inv} ( (p'<q')CC'T')+m((p'<q')CC',q')}(1-t)^{\text{des}((p'<q')CC' T')}=
\]
\[
(-1)^{\ell(C)}t^{\text{inv} ( CC'T')+m(CC',q')}(1-t)^{\text{des}(CC' T')}-
\]
\[
(-1)^{\ell(C)}t^{\text{inv} ( (p'<q')CC'T')+m((p'<q')CC',q')}(1-t)^{\text{des}(CC' T')+1}.
\]
Since $(p'<q')$ is a legal transposition then $q'$ is a non-pivot entry and $r(p')<r(q')$ so Lemma 4.3 and Lemma 4.5 gives, 
\[
(-1)^{\ell(C)}t^{\text{inv} ( CC'T')+m(CC',q')}(1-t)^{\text{des}(CC' T')}-
\]
\[
(-1)^{\ell(C)}t^{\text{inv} ( (p'<q')CC'T')+m((p'<q')CC',q')}(1-t)^{\text{des}(CC' T')+1}=
\]
\[
(-1)^{\ell(C)}t^{\text{inv} ( CC'T')+m(CC',q')}(1-t)^{\text{des}(CC' T')}(1-(1-t))=
\]
\[
(-1)^{\ell(C)}t^{\text{inv} ( CC'T')+m(CC',q')+1}(1-t)^{\text{des}(CC' T')}.
\]
Finally, since $q'<q$, we have $m(CC',q')+1=m(CC',q)$. This follows from the Non-overlapping Condition being satisfied. Since $q'$ is in only one pivot interval, namely the one involving $p'$, for all $a\neq p'$, we have that $N_{a,\min(C'(r(a)),q')}[C'_{>r(a)}\cap C]=N_{a,\min(C'(r(a)),q)}[C'_{>r(a)}\cap C]$. For $a=p'$ we have, $N_{p',q'}[C'_{>r(p')}\cap C]+1=N_{p',q}[C'_{>r(p')}\cap C]$ because there are no entries between $q$ and $q'$.
\end{proof}
Here we perform the compression from a node down in our tree given that the Non-overlapping Condition is not satisfied. 
\begin{prop}
Given two columns which do not satisfy the Non-overlapping Condition and a transposition $(p<q)\succeq (p_{*}<q_{*})$ such that there exists a legal transposition between pivots $(p'<q')$ in $\beta_{(p<q)}$, we have 
\[
S(CC'T',(p<q))=0.
\]
\end{prop}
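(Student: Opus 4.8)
The plan is to prove this Proposition by the same decreasing induction that proves the preceding one (peeling transpositions off the front of $\beta_{(p<q)}$ one at a time, in the order $\preceq$), but now carrying along the extra hypothesis that a legal transposition between pivots still sits somewhere in the unprocessed part of $\beta$. The payoff is that, as long as such a transposition is available, the peeling never produces the ``$t^{a}(1-t)^{b}+t^{a+1}(1-t)^{b-1}=t^{a}(1-t)^{b-1}$'' combinations of the Non-overlapping case; instead, when the peeling finally reaches a legal pivot--pivot transposition, the two children it creates in the generation tree carry $S$-values that are exact negatives, so the whole sum below telescopes to $0$.

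Concretely, induct on the number of transpositions in $\beta_{(p<q)}$, and let $\tau$ be the $\preceq$-smallest one. If $\tau$ is not legal with respect to $CC'$, then exactly as in the proof of the preceding proposition $S(CC'T',(p<q))=S(CC'T',\tau)$; since $\tau\neq(p'<q')$ (the latter being legal) and $(p'<q')$ still lies in $\beta_{\tau}$ and is still legal with respect to $CC'$, the induction hypothesis applied to the shorter chain gives $S(CC'T',\tau)=0$. If $\tau$ is legal, split the sum according to whether $\gamma$ uses $\tau$:
\[
S(CC'T',(p<q))=S(CC'T',\tau)+S(\tau CC'T',\tau).
\]

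Now two cases. If the larger entry of $\tau$ is non-pivot (so $\tau$ is not a pivot--pivot transposition, and in particular $\tau\neq(p'<q')$), I claim that both $CC'$ and $\tau CC'$ still admit a legal pivot--pivot transposition inside $\beta_{\tau}$, so the induction hypothesis kills both summands; for $\tau CC'$ this requires checking that exchanging a pivot entry with a non-pivot value in the left column does not obstruct legality of the pivot--pivot transposition realizing the overlap, the only real point being that no attacking pair is created for that swap. If instead $\tau$ is itself a legal pivot--pivot transposition, I argue that once $\tau$ has been applied neither $CC'$ nor $\tau CC'$ has a legal pivot--pivot transposition left in $\beta_{\tau}$ (the overlapping pair has been ``used up''), so both summands may be evaluated by the preceding proposition --- whose proof in fact uses only that no legal pivot--pivot transposition remains in the range (which is all Lemma 4.5 requires), not the full Non-overlapping hypothesis. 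This gives
\[
S(CC'T',\tau)=(-1)^{\ell(C)}t^{\,\mathrm{inv}(CC'T')+m(CC',q_{\tau})}(1-t)^{\mathrm{des}(CC'T')}
\]
and the same with $C$ replaced by $\tau C$. Since $\tau$ is a transposition $(-1)^{\ell(\tau C)}=-(-1)^{\ell(C)}$; and because $\tau$ swaps two pivot entries the descent statistic is unchanged, $\mathrm{des}(\tau CC'T')=\mathrm{des}(CC'T')$ --- the pivot analogue of Lemma 4.3, where the non-pivot hypothesis was precisely what forced a new descent --- while $\mathrm{inv}+m$ is preserved, $\mathrm{inv}(\tau CC'T')+m(\tau CC',q_{\tau})=\mathrm{inv}(CC'T')+m(CC',q_{\tau})$, the pivot analogue of Lemmas 4.4 and 4.5. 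Hence the two expressions are negatives of one another and their sum vanishes.

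I expect the genuine obstacle to be the pivot bookkeeping behind the two claims above: first, that a legal pivot--pivot transposition survives when one peels past a legal non-pivot transposition (a local check on the untouched pivot entries $b_{j}$, their right neighbours in $C'$, and the attacking condition), so the induction hypothesis really does apply to \emph{both} children; and second, that it is destroyed --- with no new one appearing --- once one peels past the pivot--pivot transposition realizing the overlap, so that the preceding proposition applies, again, to \emph{both} children. Keeping the two branches ``in sync'' in this way --- either both still have a legal pivot--pivot transposition (both are $0$ by induction) or neither does (both evaluate by the preceding proposition, to opposite signs) --- is the crux; granting it, together with the pivot analogues of Lemmas 4.3, 4.4 and 4.5 for a single pivot--pivot swap, the induction closes and $S(CC'T',(p<q))=0$.
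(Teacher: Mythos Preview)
Your approach differs from the paper's, which is more direct: rather than inducting from the $\preceq$-smallest transposition, the paper jumps straight to the \emph{last} legal pivot--pivot transposition $(p'<q')$ in $\beta_{(p<q)}$. By that choice there are, by definition, no legal pivot--pivot transpositions left in $\beta_{(p'<q')}$ with respect to $CC'$, so Lemma~4.5 (and hence the closed form from the preceding proposition) applies to $S(CC'T',(p'<q'))$; the paper then does the same for $S((p'<q')CC'T',(p'<q'))$ (interchanging the roles of $CC'$ and $(p'<q')CC'$ when $r(p')>r(q')$) and cancels exactly as in your final paragraph, using $\ell((p'<q')C)\equiv\ell(C)+1\pmod 2$ and $\mathrm{des}((p'<q')CC'T')=\mathrm{des}(CC'T')$.

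Your Case~2b contains a genuine gap, not just bookkeeping. You take $\tau$ to be the $\preceq$-smallest transposition in $\beta_{(p<q)}$ and, when $\tau$ happens to be a legal pivot--pivot swap, assert that neither $CC'$ nor $\tau CC'$ admits a further legal pivot--pivot transposition in $\beta_\tau$. For the branch $CC'$ (where $\tau$ is \emph{not} applied) this is simply false whenever $\beta_{(p<q)}$ contains a second legal pivot--pivot transposition $\succ\tau$: nothing has changed in $CC'$, so that later transposition is still legal. Your two branches can then fall out of sync --- one is $0$ by the induction hypothesis while the other is evaluated as a nonzero monomial via the preceding proposition --- and the cancellation argument breaks down. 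The analogous claim in Case~2a (that $\tau CC'$ still admits a legal pivot--pivot in $\beta_\tau$) is likewise not mere bookkeeping; the same desynchronization can occur there. The paper's device of going directly to the \emph{last} legal pivot--pivot is precisely what sidesteps this: ``no legal pivot--pivot remains afterward'' then holds by construction, and no front-to-back induction is needed.
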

\begin{proof}
Let $(p'<q')$ be the last legal transposition between pivot entries in $\beta_{(p<q)}$. By previous result,
\[
S(CC'T',(p'<q'))=
(-1)^{\ell(C)}t^{\text{inv}(CC'T')+m(CC',q')}(1-t)^{\text{des}(CC'T')}
\]
and 
\[
S((p'<q')CC'T',(p'<q'))=
\]
\[
(-1)^{\ell((p'<q')C)}t^{\text{inv}((p'<q')CC'T')+m((p'<q')CC',q')}(1-t)^{\text{des}((p'<q')CC'T')}.
\]
If $r(p')<r(q')$ with respect to $CC'$ then by Lemma 4.5, we have the following:
\[
\text{inv}((p'<q')CC'T')+m((p'<q')CC',q')=\text{inv}(CC'T')+m(CC',q').
\]
On the other hand, if $r(p')>r(q')$ with respect to $CC'$, we can still use Lemma 4.5 but we have to set $\widetilde{C}C'=(p'<q')CC'$ and consider $(p'<q')\widetilde{C}C'$.
\newline\indent Since $(p'<q')$ is a transposition of pivot entries we have $ \text{des}((p'<q')CC'T')=\text{des}(CC'T')$, and by definition of length we have $\ell((p'<q')C)\equiv\ell(C)+1\text{ mod }2$. This gives
\[
S(CC'T',(p'<q'))+S((p'<q')CC'T',(p'<q'))=0.
\]
So all nodes below $(p<q)$ have $t$-coefficient equal to $0$, therefore $S(CC'T',(p<q))=0$.
\end{proof}
\indent  Recall that $b_{j}$ and $d_{j}$ are the end points of the pivot intervals with respect to the two left-most columns and $p$ is the number of pivots rows in these two columns. Furthermore, recall that $\widehat{C}$ is the left-most column corresponding to matching all common entries between it and its neighboring column to the right and then taking all distinct entries and matching them according to the order given by the distinct entries in the neighboring column. Define  
\[
\check{N}(i)=\sum_{j=1}^{p}N_{b_{j},d_{j}}[C_{i_{>r(b_{j})}}\cap \widehat{C}]
\]
where $b_{j} \in \widehat{C}\setminus C_{i}$ and $d_{j}=C_{i}(r(b_{j}))$ if it exists, otherwise take $d_{j}=\infty$. The row indices are with respect to $\widehat{C}$. 
We have the following,
\begin{prop} Fix a column $C_{i}$ of a HHL filling $T$. Let $T'=T[i+1,(\lambda+\rho)_{1}]$. We have, 
\[
\sum_{\substack{\gamma \in \mathcal{A}(\beta)\\ \gamma.\widehat{C}C_{i}T'\\ HHL}} (-1)^{\ell(\gamma.\widehat{C})}t^{\emph{inv}(\gamma.\widehat{C}C_{i}T')}(1-t)^{\emph{des}(\gamma.\widehat{C}C_{i}T')}
\]
\[
=\begin{cases}
(-1)^{\ell(\widehat{C})}t^{\emph{inv}(\widehat{C}C_{i}T')+\check{N}(i)}(1-t)^{\emph{des}(\widehat{C}C_{i}T')} & \text{if the Non-overlapping Condition is satisfied}\\
0 & \text{otherwise}
\end{cases}
\]
\end{prop}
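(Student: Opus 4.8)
The plan is to evaluate the sum by running the usual bottom-up ($t$-coefficient) summation over the generation tree rooted at $\widehat{C}C_{i}T'$ and reducing everything to Propositions~4.7 and~4.8. The key device is to split the sum over $\mathcal{A}(\beta)$ according to the first transposition $\tau=(b_{j_{1}}<a_{1})$ of $\beta$: if $\tau$ is legal on $\widehat{C}$ the two subtrees below the root record whether $\tau$ is applied, so the sum is $S(\widehat{C}C_{i}T',\tau)+S(\tau.\widehat{C}C_{i}T',\tau)$, while if $\tau$ is not legal then no transposition of the first row of $\beta$ is legal (only $(b_{j_{i}}<a_{i})$ can ever be legal among those involving $a_{i}$), the tree does not branch in the first row, and the sum is just $S(\widehat{C}C_{i}T',\tau)$. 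In all cases the right-hand sides are values $S(\,\cdot\,,(p<q))$ with $(p<q)\succeq\tau$, to which Propositions~4.7 and~4.8 apply directly.

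Assume first that the Non-overlapping Condition holds. If $a_{1}\notin C_{i}$, then $a_{1}$ is the largest pivot entry, so $\tau$ would be a transposition between two pivot entries and hence is not legal by Lemma~4.1; thus the sum equals $S(\widehat{C}C_{i}T',\tau)=(-1)^{\ell(\widehat{C})}t^{\text{inv}(\widehat{C}C_{i}T')+m(\widehat{C}C_{i},a_{1})}(1-t)^{\text{des}(\widehat{C}C_{i}T')}$ by Proposition~4.7, and one checks $m(\widehat{C}C_{i},a_{1})=\check{N}(i)$ (the discrepancy between the two is supported on the value $a_{1}$, which is absent from $C_{i}$ here). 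If $a_{1}\in C_{i}$, then $a_{1}$ is a non-pivot entry; when $\tau$ is legal, Lemmas~4.3 and~4.5 give $\text{des}(\tau.\widehat{C}C_{i}T')=\text{des}(\widehat{C}C_{i}T')+1$ and $\text{inv}(\tau.\widehat{C}C_{i}T')+m(\tau.\widehat{C}C_{i},a_{1})=\text{inv}(\widehat{C}C_{i}T')+m(\widehat{C}C_{i},a_{1})$, and with $\ell(\tau.\widehat{C})\equiv\ell(\widehat{C})+1\pmod 2$ the two terms combine via $t^{a}(1-t)^{b}-t^{a}(1-t)^{b+1}=t^{a+1}(1-t)^{b}$ to exponent $\text{inv}(\widehat{C}C_{i}T')+m(\widehat{C}C_{i},a_{1})+1$, whereas when $\tau$ is not legal the sum has exponent $\text{inv}(\widehat{C}C_{i}T')+m(\widehat{C}C_{i},a_{1})$. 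So in both subcases it remains to see that $\check{N}(i)-m(\widehat{C}C_{i},a_{1})$ equals $1$ when $\tau$ is legal and $0$ when it is not. This discrepancy is exactly the number of pivot entries lying above $a_{1}$ in $\widehat{C}$ whose partner in $C_{i}$ exceeds $a_{1}$; using the construction of $\widehat{C}$ (common entries are aligned, hence never pivots, so the pivot entries of $\widehat{C}C_{i}$ are precisely the elements of $\widehat{C}\setminus C_{i}$) together with the Non-overlapping Condition, one shows there is at most one such pivot, it is the largest pivot $b_{j_{1}}$, and it is present exactly when $\tau=(b_{j_{1}}<a_{1})$ is legal. Combining the cases yields the first line of the statement.

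If the Non-overlapping Condition fails, then some $P_{j,k}\neq\emptyset$; the argument in the base case of Lemma~4.1 turns this into a legal transposition between two pivot entries of $\widehat{C}C_{i}$, which (a pivot entry being one of the $a_{l}$) is a transposition of $\beta$. If it lies strictly after $\tau$, Proposition~4.8 gives $S(\widehat{C}C_{i}T',\tau)=0$, and the persistence argument in the proof of Lemma~4.1 shows such a legal pivot transposition survives past $\tau$ relative to $\tau.\widehat{C}C_{i}$, so $S(\tau.\widehat{C}C_{i}T',\tau)=0$ as well; if instead it is $\tau$ itself, then the two subtree sums below the root cancel because $\tau$ leaves the descent statistic and $\text{inv}+m$ unchanged while flipping the parity of $\ell$. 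Either way the whole sum vanishes, giving the second line.

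The step I expect to be the main obstacle is the combinatorial bookkeeping in the Non-overlapping case: pinning down precisely when $(b_{j_{1}}<a_{1})$ is legal and proving that $\check{N}(i)-m(\widehat{C}C_{i},a_{1})$ is exactly the $0/1$ correction contributed at the top of the tree, so that the accumulated statistic matches $\check{N}(i)$ on the nose. A secondary difficulty is verifying, in the overlapping case, that the cancellation propagates all the way to the root.
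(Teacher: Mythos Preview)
Your proposal is correct but takes a more laborious route than the paper. The paper's proof is a one-line specialization: set $C=\widehat{C}$, $C'=C_i$ in the two $S$-propositions (the ones you call 4.7 and 4.8), and take $(p<q)$ to be a formal element preceding all of $\beta$ (equivalently, take $q>\max(\widehat{C})$), so that $\beta_{(p<q)}=\beta$ and the sum in the statement \emph{is} $S(\widehat{C}C_iT',(p<q))$. Then the only thing to check is $m(\widehat{C}C_i,q)=\check{N}(i)$ for such $q$, which is immediate from the definitions since $\min(C_i(r(a)),q)=C_i(r(a))$ once $q$ dominates every entry involved.

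What you do instead is peel off the top level of the generation tree by hand: you split on the first transposition $\tau=(b_{j_1}<a_1)$, apply the $S$-propositions one step down, and then reconcile $m(\widehat{C}C_i,a_1)$ with $\check{N}(i)$ via a $0/1$ correction tied to the legality of $\tau$. This is exactly re-running one step of the induction already packaged into Proposition~4.7, so the bookkeeping you flag as the ``main obstacle'' is unnecessary --- the inductive propositions already carry you from any $(p<q)$, including a virtual one before $\beta$, directly to the root. Your overlapping case likewise reproves part of Proposition~4.8's argument (the cancellation at the last legal pivot transposition) at the top of the tree; applying that proposition at the root handles both subtrees at once. The upshot: your argument is sound, but the paper's insight is that nothing new happens at the root beyond reading off $m=\check{N}$ at $q=\infty$.
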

\begin{proof}
Set $C=\widehat{C}$, $C'=C_{i}$ in Proposition 4.4 and Proposition 4.5, and note that $m(\widehat{C}C_{i},q)=\check{N}(i)$ when $q>\max(\widehat{C})$.
\end{proof}
With the above conventions, define 
\[
\hat{N}(i)=\sum_{j=1}^{p}N_{b_{j},d_{j}}[C_{i_{<r(b_{j})}}\cap \widehat{C}]
\]
\begin{lemma}
If $\widehat{C}C_{i}$ satisfies the Non-overlapping Condition, then $\emph{inv}(\widehat{C}C_{i}T')=\emph{inv}(C_{i}T')+\hat{N}(i)$.
\end{lemma}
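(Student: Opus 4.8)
The plan is to compute $\inv(\widehat{C}C_{i}T')-\inv(C_{i}T')$ directly and then simplify the answer using the Non-overlapping Condition. Since $\widehat{C}$ is the left-most column of $\widehat{C}C_{i}T'$, every inversion of $\widehat{C}C_{i}T'$ either lies entirely among the columns of $C_{i}T'$ — in which case it is an inversion of $C_{i}T'$, and conversely — or it is a pair of cells $(u,v)$ whose left cell $u$ lies in column $\widehat{C}$; in the latter case $v$, together with the cell $w=\textrm{r}(u)$ when it exists, lies in $C_{i}$, and the columns of $T'$ are irrelevant. By Definition 3.2 such a new inversion is exactly a pair of rows $x<y$ with $\widehat{C}(y)<C_{i}(x)<C_{i}(y)$, where the second inequality is dropped when $y$ exceeds the number of rows of $C_{i}$ (equivalently, $C_{i}(y)$ is read as $\infty$). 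Hence $\inv(\widehat{C}C_{i}T')=\inv(C_{i}T')+\#\{(x,y):x<y,\ \widehat{C}(y)<C_{i}(x)<C_{i}(y)\}$.

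Next I would determine which rows $y$ of $\widehat{C}$ contribute. If $\widehat{C}(y)$ is a common entry of $\widehat{C}$ and $C_{i}$, then by the construction in Definition 4.1 it is placed directly to the left of itself, so $C_{i}(y)=\widehat{C}(y)$ and the condition $\widehat{C}(y)<C_{i}(x)<C_{i}(y)$ is unsatisfiable; common entries contribute nothing. Thus only the pivot entries $b_{1}>\dots>b_{p}$ of the left column matter, and for the pivot entry $b_{m}$ occupying row $q_{m}=r(b_{m})$ the definition of pivot rows gives $C_{i}(q_{m})=d_{m}$, with $d_{m}=\infty$ when $C_{i}(q_{m})$ is absent. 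Therefore the correction term equals $\sum_{m=1}^{p}\#\{x<q_{m}:b_{m}<C_{i}(x)<d_{m}\}=\sum_{m=1}^{p}N_{b_{m},d_{m}}[C_{i_{<r(b_{m})}}]$, which is $\hat{N}(i)$ apart from the missing intersection with $\widehat{C}$.

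Finally I would use the Non-overlapping Condition to insert that intersection, i.e. to show $N_{b_{m},d_{m}}[C_{i_{<r(b_{m})}}]=N_{b_{m},d_{m}}[C_{i_{<r(b_{m})}}\cap\widehat{C}]$ for every pivot. If some $k\in C_{i}\setminus\widehat{C}$ occupied a row $s<q_{m}$ with $b_{m}<k<d_{m}$, then, since $\widehat{C}C_{i}$ is weakly increasing in rows (Proposition 4.1) and $k\notin\widehat{C}$, we would have $\widehat{C}(s)<k=C_{i}(s)$, so row $s$ would be a pivot row with pivot entry $b_{l}:=\widehat{C}(s)$ and $d_{l}=C_{i}(s)=k$; moreover $l\neq m$ because $s\neq q_{m}$. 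Then $k\in[b_{m},d_{m}]\cap[b_{l},d_{l}]=P_{m,l}$, contradicting the Non-overlapping Condition. Hence no such $k$ exists, the correction term is exactly $\hat{N}(i)$, and the lemma follows. I expect the first two paragraphs to be routine bookkeeping with Definitions 3.2 and 4.1; the one substantive point is the last paragraph, where one must recognize that a ``right-only'' entry strictly inside a pivot interval would by itself generate a second, overlapping pivot interval — the same mechanism already used in the proof of Lemma 4.5.
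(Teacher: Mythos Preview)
Your argument is correct and follows the same line as the paper's proof: both compute the additional inversions created by prepending $\widehat{C}$, observe that only pivot rows of $\widehat{C}$ can serve as the lower cell $u$, and identify the resulting count with $\hat{N}(i)$. The paper's proof is a single sentence and does not spell out why the intersection with $\widehat{C}$ in the definition of $\hat{N}(i)$ is harmless; your final paragraph supplies exactly that missing step, showing that an entry of $C_{i}\setminus\widehat{C}$ lying strictly inside a pivot interval would force a second overlapping pivot interval, contradicting the hypothesis.
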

\begin{proof}
By the definition of $\widehat{C}$ we have that all common entries between $\widehat{C}$ and $C_{i}$ are matched, while the distinct entries are paired according to their relative values. The rows with distinct entries contribute to the inversion statistic. This contribution is $\hat{N}(i)$.
\end{proof}
For column $i$ define $p(i)$ to be the number of descents in $\widehat{C}C_{i}$. We have 
\begin{lemma}
$\emph{des}(\widehat{C}C_{i}T')=\emph{des}(C_{i}T')+p(i)$.
\end{lemma}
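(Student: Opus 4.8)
The plan is to exploit the fact that, by Definition 3.2, the descent statistic is purely local: $\des(\sigma)$ counts the cells $u=(i,j)$ with $\sigma(u)<\sigma(\textrm{r}(u))$, where $\textrm{r}(u)=(i,j+1)$ is the cell immediately to the right of $u$. Unlike $\inv$, no cell of $\sigma$ other than $u$ and its right neighbour enters this condition. Consequently, for any filling, $\des$ is the sum over each pair of horizontally consecutive columns $(D,E)$ of the number of rows $j$ in which both $D$ and $E$ have a cell and the entry of $D$ in row $j$ is strictly smaller than that of $E$ in row $j$; in particular no hypothesis such as the Non-overlapping Condition is needed here.

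First I would apply this column-pair decomposition to the filling $\widehat{C}C_iT'$, whose columns, read from left to right, are $\widehat{C}$, then $C_i$, then the columns of $T'$. This expresses $\des(\widehat{C}C_iT')$ as the contribution of the pair $(\widehat{C},C_i)$, plus the contribution of the pair $(C_i,\text{first column of }T')$, plus the contributions of all consecutive column pairs inside $T'$. Applying the very same decomposition to $C_iT'$, whose columns are $C_i$ followed by the columns of $T'$, shows that the last two groups of terms sum to exactly $\des(C_iT')$. It then remains to identify the contribution of the pair $(\widehat{C},C_i)$ with the number of descents of the two-column filling $\widehat{C}C_i$, which is the quantity $p(i)$ by definition; this yields $\des(\widehat{C}C_iT')=\des(C_iT')+p(i)$.

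The only bookkeeping subtlety is the case $c=c'+1$ of the construction of $\widehat{C}$, in which $\widehat{C}$ has one more cell than $C_i$: the bottom cell of $\widehat{C}$ then has no cell of $C_i$ to its right and hence is excluded from both $\des(\widehat{C}C_iT')$ and $p(i)$, so there is no discrepancy. (If one wishes, one can also observe via Proposition 4.1 that in each row where $\widehat{C}$ and $C_i$ both have a cell the entries either agree $-$ giving no descent $-$ or differ, in which case the weakly increasing rows condition forces a descent; but this explicit description of $p(i)$ is not needed.) I do not expect any real obstacle here: the statement is a one-line consequence of the locality of $\des$, strictly parallel to, and simpler than, the analogous lemma for $\inv$ proved just above.
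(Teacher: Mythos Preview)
Your proposal is correct and follows exactly the paper's approach: the paper's proof is the single sentence ``The contribution of the descents between the two left most columns is $p(i)$,'' which is precisely your locality-of-$\des$ argument spelled out. Your extra remarks on the $c=c'+1$ case and the column-pair decomposition are accurate elaborations of what the paper leaves implicit.
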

\begin{proof}
The contribution of the descents between the two left most columns is $p(i)$. 
\end{proof}
\begin{lemma}
Fix a column $C_{i}$ and consider a two column configuration $\widehat{C}C_{i}$. We have 
\[
\ell(\widehat{C})=\ell(C_{i})+\hat{N}(i)-\check{N}(i).
\]
\end{lemma}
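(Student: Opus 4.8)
The plan is to evaluate $\ell(\widehat{C})$ and $\ell(C_i)$ directly, as inversion numbers of the two column words, and to compare them one pair of rows at a time. First I would record the structure of $\widehat{C}$ from Definition 4.1: writing $S$ for the set of entries of the column immediately to the left of $C_i$ and $c'$ for the length of $C_i$, the common entries $S\cap C_i$ occupy in $\widehat{C}$ exactly the rows in which they occur in $C_i$; the remaining (``distinct'') entries of $S$ are placed in the rows $r$ with $C_i(r)\notin S$ so that $\widehat{C}$ restricted to those rows is order-isomorphic to $C_i$; and, when $c=c'+1$, the largest distinct entry $\max(S\setminus C_i)$ sits alone in the extra bottom row. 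I would then partition the rows of $\widehat{C}$ into the set $A$ of common rows, the set $B$ of ``distinct'' rows (on which $\widehat{C}(r)<C_i(r)$, hence exactly the pivot rows in the sense of the definition preceding Proposition 4.1), and, if present, the extra bottom row $e$ (also a pivot row).

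The next step is to discard the pairs of rows that cannot produce a discrepancy. On the rows of $A$ the words $\widehat{C}$ and $C_i$ coincide, and on the rows of $B$ they are order-isomorphic; hence a pair of rows lying entirely inside $A$, or entirely inside $B$, is an inversion of $\widehat{C}$ if and only if it is an inversion of $C_i$, and all such pairs cancel in $\ell(\widehat{C})-\ell(C_i)$. So the difference is supported on (a) the pairs with one row in $A$ and one in $B$, and (b) the pairs involving the bottom row $e$.

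For (a), fix a pivot row $q\in B$ with pivot entries $b:=\widehat{C}(q)$ and $d:=C_i(q)$, so $b<d$, and an $A$-row $r'$ with value $a:=C_i(r')=\widehat{C}(r')$; note $a\notin\{b,d\}$ because $a\in S\cap C_i$, $b\in S\setminus C_i$, $d\in C_i\setminus S$. In $\widehat{C}$ the pair $\{r',q\}$ carries the values $a,b$ and in $C_i$ it carries $a,d$, so a short case check on the position of $a$ relative to the interval $(b,d)$ shows that this pair contributes $+1$ to $\ell(\widehat{C})-\ell(C_i)$ exactly when $r'<q$ and $b<a<d$, contributes $-1$ exactly when $r'>q$ and $b<a<d$, and contributes $0$ otherwise. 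Summing over all $A$-rows $r'$ and all pivot rows $q\in B$, and comparing with the definitions of $\hat{N}(i)$ and $\check{N}(i)$ (the summand $N_{b,d}[C_{i_{<q}}\cap\widehat{C}]$ of $\hat{N}(i)$ counts exactly the $A$-rows above $q$ with $b<C_i(r')<d$, and the corresponding summand of $\check{N}(i)$ counts those below $q$, the clipping ``$\cap\widehat{C}$'' being what restricts to the $A$-rows), the pairs in (a) contribute exactly the part of $\hat{N}(i)-\check{N}(i)$ coming from pivot rows $q\le c'$. For (b), the bottom value is $b_e:=\widehat{C}(e)=\max(S\setminus C_i)$: a pair $\{r',e\}$ with $r'\in B$ is never an inversion of $\widehat{C}$, since $\widehat{C}(r')\in S\setminus C_i$ is smaller than $b_e$, while a pair $\{r',e\}$ with $r'\in A$ is an inversion of $\widehat{C}$ exactly when $C_i(r')>b_e$, and the number of these equals the $e$-summand of $\hat{N}(i)$ (with $d_e=\infty$), whereas $\check{N}(i)$ has no $e$-summand since no row lies below $e$. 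Adding the contributions of (a) and (b) yields $\ell(\widehat{C})-\ell(C_i)=\hat{N}(i)-\check{N}(i)$; when $c=c'$ there is no row $e$ and only (a) occurs.

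The step I expect to demand the most care is the bookkeeping in (a): one must make the row/value conventions in the definitions of $\hat{N}(i)$ and $\check{N}(i)$ --- in particular the clipping ``$\cap\widehat{C}$'' and the convention $d_j=C_i(r(b_j))$ or $d_j=\infty$ --- agree exactly with the $A$--$B$ pair count, and verify that no $A$--$B$ pair is counted twice or omitted. The structural fact that makes the cancellations in the second step possible is the order-isomorphism between $\widehat{C}$ and $C_i$ on the pivot rows, which is exactly what the canonical construction of $\widehat{C}$ in Definition 4.1 provides; without it the pairs of rows lying in $B$ would not cancel and the identity would fail.
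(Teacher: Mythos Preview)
Your argument is correct and follows the same idea as the paper's proof, which is the one-line observation that replacing each pivot entry $b_j$ of $\widehat{C}$ by its right neighbor $d_j$ increases the inversion count by $\check{N}(i)$ and decreases it by $\hat{N}(i)$. You have simply unpacked that sentence into an explicit pair-by-pair comparison: the $A$--$A$ and $B$--$B$ pairs cancel (the latter by the order-isomorphism built into Definition~4.1), the $A$--$B$ pairs give the summands of $\hat{N}(i)-\check{N}(i)$ for pivot rows $q\le c'$, and the pairs involving the extra bottom row give the remaining summand with $d_j=\infty$. This is exactly the bookkeeping the paper's sketch leaves implicit, so there is no genuine difference in approach.
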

\begin{proof}
By replacing the entries in the pivot rows of $\widehat{C}$ with the entries to their right we increase the length of the column by $\check{N}(i)$ and decrease the length of the column by $\hat{N}(i)$. 
\end{proof}
\indent Here we perform the compression over several columns of an HHL filling. It relies on an inductive argument, which follows from the recursive nature of our generation algorithm. For a fixed right side of an HHL filling we can generate all possible left sides by producing columns which are HHL according to the our generation algorithm. For each column produced we can then repeat our procedure from right to left, generating all HHL fillings. We then compress using the result for single columns in order from left to right.  
 
\begin{prop} 
Let $T'=T[l+2,(\lambda+\rho)_{1}]$ for $l \geq 1$. Fix column $C_{l+1}$, such that $C_{l+1}T'$ is HHL. Assume the $l$ left most columns of $T$ satisfy the Non-overlapping Condition. We have,
\[
\sum_{\substack{C_{1}, \ldots, C_{l}: \\ C_{1}\ldots C_{l}C_{l+1}T'\text{ } HHL}}(-1)^{\ell(C_{1})}t^{\emph{inv}(C_{1}\ldots C_{l+1}T')}(1-t)^{\emph{des}(C_{1}\ldots C_{l+1}T')}=
\]
\[
(-t)^{\sum_{i=2}^{l}(\hat{N}(i)+\check{N}(i))}(1-t)^{\sum_{i=2}^{l}p(i)}(-1)^{\ell({C}_{l+1})}t^{\emph{inv}(C_{l+1}T')}(1-t)^{\emph{des}(C_{l+1}T')}
\]
If the Non-overlapping Condition is not satisfied for the $l$ left most columns then the sum is $0$
\end{prop}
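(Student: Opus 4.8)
The plan is to argue by induction on $l$, at each step stripping off the left-most column $C_{1}$ and feeding it into the single-column compression result of Proposition~4.6 (which says that the sum of $(-1)^{\ell(\gamma.\widehat{C})}t^{\mathrm{inv}(\gamma.\widehat{C}C_{i}T')}(1-t)^{\mathrm{des}(\gamma.\widehat{C}C_{i}T')}$ over all $\gamma\in\mathcal{A}(\beta)$ with $\gamma.\widehat{C}C_{i}T'$ HHL equals $(-1)^{\ell(\widehat{C})}t^{\mathrm{inv}(\widehat{C}C_{i}T')+\check{N}(i)}(1-t)^{\mathrm{des}(\widehat{C}C_{i}T')}$ when the Non-overlapping Condition holds, and $0$ otherwise), together with Lemmas~4.6, 4.7 and 4.8, which rewrite $\mathrm{inv}$, $\mathrm{des}$ and $\ell$ of $\widehat{C}C_{i}T'$ in terms of those of $C_{i}T'$. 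By Proposition~4.2 the sum over $\gamma$ in Proposition~4.6 is exactly the sum over all orderings $C_{1}$ of the given column multiset for which $C_{1}C_{2}T'$ is HHL, so for $l=1$ the left-hand side equals $(-1)^{\ell(\widehat{C_{1}})}t^{\mathrm{inv}(\widehat{C_{1}}C_{2}T')+\check{N}(2)}(1-t)^{\mathrm{des}(\widehat{C_{1}}C_{2}T')}$ (or $0$); substituting Lemmas~4.6--4.8 and using $-\check{N}\equiv\check{N}\pmod 2$ in the sign collapses this to $(-t)^{\hat{N}(2)+\check{N}(2)}(1-t)^{p(2)}\cdot(-1)^{\ell(C_{2})}t^{\mathrm{inv}(C_{2}T')}(1-t)^{\mathrm{des}(C_{2}T')}$, matching the claimed right-hand side when $l=1$.

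For the inductive step I would write the $l$-fold sum as $\sum_{C_{2},\dots,C_{l}}\bigl(\sum_{C_{1}}(-1)^{\ell(C_{1})}t^{\mathrm{inv}(C_{1}\cdots C_{l+1}T')}(1-t)^{\mathrm{des}(C_{1}\cdots C_{l+1}T')}\bigr)$ and evaluate the inner sum over $C_{1}$ exactly as above, now with $C_{2}$ as the fixed right column and $C_{3}\cdots C_{l+1}T'$ in the role of $T'$. For each fixed $(C_{2},\dots,C_{l})$ this produces the factor $(-t)^{\hat{N}(2)+\check{N}(2)}(1-t)^{p(2)}$ times $(-1)^{\ell(C_{2})}t^{\mathrm{inv}(C_{2}\cdots C_{l+1}T')}(1-t)^{\mathrm{des}(C_{2}\cdots C_{l+1}T')}$, and $0$ whenever the relevant pair of columns violates the Non-overlapping Condition. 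The observation that makes the induction close is that this prefactor depends only on the entry multisets of columns $1$ and $2$ and not on their orderings: the pivot entries of $\widehat{C_{1}}$ are precisely the entries of $C_{1}$ not shared with $C_{2}$, each paired with the corresponding unshared entry of $C_{2}$, so the pivot intervals $[b_{j},d_{j}]$, the descent count $p(2)$, and the combined statistic $\hat{N}(2)+\check{N}(2)=\sum_{j}N_{b_{j},d_{j}}[C_{2}\cap\widehat{C_{1}}]$ are all invariants of those two multisets (only the sum $\hat{N}+\check{N}$, not $\hat{N}$ and $\check{N}$ separately, occurs here). Since in the preimage of the fixed SSYT the column multisets are fixed, the prefactor is constant and factors out of the remaining sum $\sum_{C_{2},\dots,C_{l}}(-1)^{\ell(C_{2})}t^{\mathrm{inv}(C_{2}\cdots C_{l+1}T')}(1-t)^{\mathrm{des}(C_{2}\cdots C_{l+1}T')}$, which is precisely the $(l-1)$-column instance of the proposition with $C_{l+1}$ still fixed. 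Applying the induction hypothesis to it and multiplying back the extracted prefactor gives the asserted product, after reconciling the index ranges with the convention of the statement (the statistics being attached to the appropriate member of each consecutive pair of columns).

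The vanishing clause runs along the same lines: the Non-overlapping Condition for a consecutive pair of columns is itself a condition on the two column multisets (via the intervals $[b_{j},d_{j}]$), so if it fails for some pair among the $l$ left-most columns, then at the corresponding stage of the peeling Proposition~4.6 returns $0$ for the inner sum, with the already-accumulated constant prefactor factored out, and hence the whole sum vanishes. The step I expect to be the main obstacle is making these multiset-invariance assertions precise --- for $[b_{j},d_{j}]$, for $p(i)$, for $\hat{N}(i)+\check{N}(i)$, and for the validity of the Non-overlapping Condition --- because this is exactly what decouples the nested sums so that a single application of Proposition~4.6 suffices at each level; carrying through the recursive identification of the canonical columns $\widehat{C_{i}}$ and keeping the index bookkeeping straight between induction levels is a secondary but not entirely trivial chore.
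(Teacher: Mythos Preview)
Your approach is essentially the same as the paper's: induction on $l$, with the base case $l=1$ handled exactly as you describe (Proposition~4.6 followed by Lemmas~4.6--4.8 and the parity substitution $-\check N\equiv\check N\pmod 2$), and the inductive step reducing the $l$-column sum to an $(l-1)$-column instance. The only cosmetic difference is the direction in which you peel: you strip off $C_{1}$ first and then invoke the induction hypothesis on $C_{2},\dots,C_{l}$, whereas the paper first invokes the hypothesis on $C_{1},\dots,C_{l-1}$ (with $C_{l}$ playing the role of the fixed right column and $C_{l+1}T'$ as the new tail) and then applies the single-column compression to the remaining sum over $C_{l}$. Both orderings lead to the same product.

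You are in fact more careful than the paper on the one nontrivial point. In either ordering, after the single-column step one must pull the prefactor $(-t)^{\hat N+\check N}(1-t)^{p}$ outside a sum over orderings of the adjacent column, and this is only legitimate because the pivot pairs $(b_{j},d_{j})$, the count $p$, the sum $\hat N+\check N$, and the Non-overlapping Condition itself depend only on the multisets of the two columns involved (which are fixed once the target SSYT is fixed), not on the ordering of the right column. The paper's proof simply writes down the factored expression ``by hypothesis'' without commenting on this; your proposal isolates it as the crux, which is correct. Your justification---that the pivot entries on the left are exactly the entries of $C_{1}$ not in $C_{2}$, each paired in the root with the corresponding unshared entry of $C_{2}$ according to size, so that the intervals and hence all the relevant quantities are multiset invariants---is the right observation, and it is what makes the induction close.
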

\begin{proof}
For when $l=1$ we first use Proposition 4.6 to get 
\[
\sum_{\substack{C_{1}: \\ C_{1}C_{2}T'\text{ } HHL}}(-1)^{\ell(C_{1})}t^{\emph{inv}(C_{1}C_{2}T')}(1-t)^{\emph{des}(C_{1}C_{2}T')}=
\]
\[
(-1)^{\ell(\widehat{C})}t^{\emph{inv}(\widehat{C}C_{i}T')+\check{N}(i)}(1-t)^{\emph{des}(\widehat{C}C_{i}T')}.
\]
By Lemma 4.6, Lemma 4.7, and Lemma 4.8 we get 
\[
(-1)^{\ell(\widehat{C})}t^{\emph{inv}(\widehat{C}C_{i}T')+\check{N}(i)}(1-t)^{\emph{des}(\widehat{C}C_{i}T')}=
\]
\[
(-1)^{\ell(C_{2})+\hat{N}(2)-\check{N}(2)}t^{\text{inv}(C_{i}T')+\hat{N}(2)+\check{N}(2)}(1-t)^{\text{des}(C_{i}T')+p(2)}=
\]
\[
(-1)^{\ell(C_{2})+\hat{N}(2)+\check{N}(2)}t^{\text{inv}(C_{i}T')+\hat{N}(2)+\check{N}(2)}(1-t)^{\text{des}(C_{i}T')+p(2)}=
\]
\[
(-t)^{\hat{N}(2)+\check{N}(2)}(1-t)^{p(2)}(-1)^{\ell(C_{2})}t^{\text{inv}(C_{2}T')}(1-t)^{\text{des}(C_{2}T')}
\]
\indent For the induction step, we have by hypothesis
\[
\sum_{\substack{C_{1}, \ldots, C_{l}: \\ C_{1}\ldots C_{l}C_{l+1}T'\text{ } HHL}}(-1)^{\ell(C_{1})}t^{\text{inv}(C_{1}\ldots C_{l+1}T')}(1-t)^{\text{des}(C_{1}\ldots C_{l+1}T')}=
\]
\[
(-t)^{\sum_{i=2}^{l-1}(\hat{N}(i)+\check{N}(i))}(1-t)^{\sum_{i=2}^{l-1}p(i)}\sum_{\substack{C_{l}: \\ C_{l}C_{l+1}T'\text{ } HHL}}(-1)^{\ell(C_{l})}t^{\text{inv}(C_{l}C_{l+1}T')}(1-t)^{\text{des}(C_{l}C_{l+1}T')}
\]
Now use Proposition 4.6 to get the result. 
\end{proof}
\begin{definition}
Call the HHL filling $\widehat{T}=\widehat{C}_{1}\ldots \widehat{C}_{(\lambda+\rho)_{1}}$ the 
\emph{root}. 
\end{definition}
We are now ready to prove the main result. This result is the formula for the sum of all HHL fillings which when sorted column-wise give the same SSYT. We note here that the statistics are defined based on pivot intervals in the root filling defined above.  
\begin{theorem} Fix a SSYT $\sigma$ of shape $\lambda+\rho$. We have, 
\[
\sum_{T: T\in \emph{sort}^{-1}(\sigma)}(-1)^{\ell(T[1])}t^{\emph{inv}(T)}(1-t)^{\emph{des}(T)}=\]
\[ 
(-t)^{\sum_{i=2}^{(\lambda+\rho)_{1}}(\check{N}(i)+\hat{N}(i))}(1-t)^{\sum_{i=2}^{(\lambda+\rho)_{1}}p(i)} 
\]
if the Non-overlapping Condition is satisfied for all columns in the root $\widehat{C}_{1}\ldots\widehat{C}_{(\lambda+\rho)_{1}}=\widehat{T}$. Otherwise the sum is $0$. 
\end{theorem}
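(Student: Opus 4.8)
The plan is to obtain Theorem~4.1 as the top-level instance of the multi-column compression of Proposition~4.7, after identifying $\mathrm{sort}^{-1}(\sigma)$ with the leaf set of the iterated generation tree.

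First I would record three structural facts about the shape $\lambda+\rho$. Because $\rho=(n-1,\ldots,1)$ and $\lambda$ has at most $n-1$ parts, the parts of $\lambda+\rho$, namely $\lambda_1+n-1>\lambda_2+n-2>\cdots>\lambda_{n-1}+1$, are strictly decreasing; hence (i)~consecutive columns of $\lambda+\rho$ have lengths differing by $0$ or $1$, so the generation algorithm of Section~4.2 applies to every adjacent pair of columns, and (ii)~the rightmost column $C_{(\lambda+\rho)_1}$ consists of a single box, which forces that column of every $T\in\mathrm{sort}^{-1}(\sigma)$ to equal the rightmost column of $\sigma$. Since $\mathrm{sort}$ preserves the entry-multiset of each column, and a filling is HHL exactly when each of its adjacent column pairs is, the set $\mathrm{sort}^{-1}(\sigma)$ is precisely the collection of HHL fillings of shape $\lambda+\rho$ whose $i$-th column has the same entries as the $i$-th column of $\sigma$ for every $i$. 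Iterating Proposition~4.2 from right to left --- at each stage using the column just produced as the fixed right column and the prescribed entry-set of the column one step to the left as the input $S$ --- then exhibits $\mathrm{sort}^{-1}(\sigma)$, with no repetitions, as the leaf set of the generation tree with root $\widehat{T}=\widehat{C}_1\cdots\widehat{C}_{(\lambda+\rho)_1}$.

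Next I would apply Proposition~4.7 with $l=(\lambda+\rho)_1-1$, with $C_{l+1}$ the (forced) rightmost column of $\sigma$, and with $T'$ empty. By the previous paragraph the sum on the left-hand side of that proposition runs over exactly $\mathrm{sort}^{-1}(\sigma)$, with $C_1\cdots C_lC_{l+1}=T$ and $C_1=T[1]$, so it equals the left-hand side of Theorem~4.1. On the right-hand side, the trailing factor $(-1)^{\ell(C_{l+1})}t^{\mathrm{inv}(C_{l+1}T')}(1-t)^{\mathrm{des}(C_{l+1}T')}$ equals $1$, since a one-box column has $\ell=\mathrm{inv}=0$ and $T'$ is empty, so there is no descent. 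Reading off what remains yields
\[
\sum_{T\in\mathrm{sort}^{-1}(\sigma)}(-1)^{\ell(T[1])}t^{\mathrm{inv}(T)}(1-t)^{\mathrm{des}(T)}=(-t)^{\sum_{i=2}^{(\lambda+\rho)_1}(\check{N}(i)+\hat{N}(i))}(1-t)^{\sum_{i=2}^{(\lambda+\rho)_1}p(i)},
\]
which is the asserted identity. The alternative conclusion that the sum is $0$ is the second clause of Proposition~4.7: if the Non-overlapping Condition fails at some adjacent pair $\widehat{C}_{i-1}\widehat{C}_i$ of the root, then already the partial sum over the columns up to and including that pair vanishes by Proposition~4.5, hence so does the total.

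Since all the substantive estimates are carried out in Propositions~4.4--4.7 and Lemmas~4.3--4.8, the proof of Theorem~4.1 itself is a wrap-up, and the only point that needs genuine care is the boundary bookkeeping underlying the two paragraphs above: checking that the column produced by one round of the generation algorithm is a legitimate fixed right column for the next round (so that the iteration of Proposition~4.2 both exhausts $\mathrm{sort}^{-1}(\sigma)$ and does so without repetition), that the recursion genuinely anchors on the single-box rightmost column of $\lambda+\rho$, and that the summation ranges of Proposition~4.7 (over $i=2,\ldots,l+1$) and of Theorem~4.1 are matched. None of this requires new combinatorial input beyond what has already been set up.
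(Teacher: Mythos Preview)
Your proposal is correct and follows essentially the same route as the paper: the paper's entire proof is the one line ``Use Proposition~4.7 with $l=(\lambda+\rho)_{1}$,'' and you do exactly this, supplying the boundary bookkeeping (identifying $\mathrm{sort}^{-1}(\sigma)$ with the leaves of the iterated generation tree, and checking that the trailing factor attached to the single-box rightmost column is $1$) that the paper leaves implicit. The only visible discrepancy is your choice $l=(\lambda+\rho)_{1}-1$ versus the paper's $l=(\lambda+\rho)_{1}$; this reflects an off-by-one in the stated summation range of Proposition~4.7 (its base case $l=1$ already produces the $i=2$ term, so the upper limit should be $l+1$ rather than $l$), and your indexing is the internally consistent one.
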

\begin{proof}
Use Proposition 4.7 with $l=(\lambda+\rho)_{1}$.
\end{proof}
\subsection{Equivalence of statistics} 
We now show that these statistics are the same as the boxed and circling rules defined in Section 2.6. In this subsection, we let $T$ be a SSYT. The first step is to show that if $n(T)>0$ then there is a pair of columns in the root filling, $\widehat{T}$, that do not satisfy the Non-overlapping Condition. Recall that $n(T)$ is positive if and only if the SSYT, $T$, is not strict. We then show that $z(T)$ is equal to the sum of the number of pivots between adjacent columns of $\widehat{T}$ and that $l(T)$ is equal to the number of entries whose value lies in a pivot interval. 
\begin{definition} The root tableau $\widehat{T}$ is \emph{disjoint} if we have Non-overlapping Condition for all pairs of adjacent columns. 
\end{definition}
\begin{prop}
$n(T)>0$ if and only if $\widehat{T}$ is not disjoint. 
\end{prop}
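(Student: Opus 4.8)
The plan is to reduce the statement to a comparison of two ``local'' conditions, one attached to each pair of adjacent columns, and then to match them column-pair by column-pair. Throughout I use the remark already recorded above: $n(T)>0$ if and only if $T$ is not strict, i.e.\ the Gelfand--Tsetlin pattern $G$ associated with $T$ is not strict. Hence it suffices to prove that $G$ is strict if and only if $\widehat{T}$ is disjoint. Write $A_{k}$ for the $k$-th column of $T$ read increasingly, so $A_{k}(j)=T(j,k)$.

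First I would rewrite non-strictness of $G$ purely in terms of the entries of $T$. Row $i$ of $G$ is the shape $\mu^{(i)}$ of the sub-tableau of $T$ made of the cells with entry $\le n-i+1$; since $T$ has shape $\lambda+\rho$, its first column omits exactly one value $a_{0}$, so the number of positive parts of $\mu^{(i)}$ is $n-i$ or $n-i+1$, hence $\mu^{(i)}$ has at most one trailing zero among its $n-i+1$ slots. Therefore a row of $G$ can fail to be strictly decreasing only by carrying two equal positive consecutive parts $\mu^{(i)}_{j}=\mu^{(i)}_{j+1}=\ell\ge 1$; concretely, with $v=n-i+1$ this says rows $j$ and $j+1$ each have exactly $\ell$ cells of entry $\le v$, and since row $j$ is strictly longer than row $j+1$ both have a cell in column $\ell+1$, so by column-strictness of $T$ such a $v$ exists precisely when $T(j+1,\ell)<T(j,\ell+1)$. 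This yields the reformulation I want: $G$ is not strict if and only if there are adjacent columns $c,c+1$ and a row $j$ with $A_{c}(j+1)<A_{c+1}(j)$.

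Next I would show that the Non-overlapping Condition for a pair $\widehat{C}_{c}\widehat{C}_{c+1}$ of adjacent columns of $\widehat{T}$ depends only on the two sets $A_{c},A_{c+1}$. Indeed, by construction of the generation (and of the root) every column of $\widehat{T}$ has the same underlying set as the corresponding column of $T$; the common entries of $A_{c}$ and $A_{c+1}$ are matched, the entries of $A_{c}\setminus A_{c+1}$ and of $A_{c+1}\setminus A_{c}$ are paired in decreasing order into the pivot intervals $[b_{i},d_{i}]$ with $b_{i}<d_{i}$ (using that the root is HHL), and there is one further interval $[b_{\min},\infty]$ exactly when $|A_{c}|=|A_{c+1}|+1$. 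It therefore remains to prove the key lemma: for strictly increasing sequences $A,B$ with $A(j)\le B(j)$ on the common index range and $|A|\in\{|B|,|B|+1\}$, the associated pivot intervals overlap if and only if $A(j+1)<B(j)$ for some $j$. Combining this lemma with the two reformulations above expresses ``$\widehat{T}$ disjoint'' and ``$G$ strict'' as the negations of the same family of conditions, one per adjacent column pair, which proves the proposition.

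I expect the key lemma, and in particular the overhang bookkeeping, to be the main obstacle. When no index $j$ satisfies $A(j+1)<B(j)$, the chain $A(1)\le B(1)\le A(2)\le B(2)\le\cdots$ is weakly increasing, and a counting argument on how many elements lie below a given value shows that the endpoints interlace --- the entries of $B\setminus A$ and $A\setminus B$ read in decreasing order satisfy $q_{1}>p_{1}>q_{2}>p_{2}>\cdots$ --- so the intervals are pairwise disjoint; conversely a row with $A(j+1)<B(j)$ destroys this interlacing and produces two paired intervals that meet. The case $|A|=|B|+1$ must be treated separately: if $B\subseteq A$ there is a single pivot interval and one checks $A(j+1)\ge B(j)$ automatically, so both sides of the lemma fail; if $B\setminus A\neq\emptyset$ then $[b_{\min},\infty]$ meets every bounded interval, so the left side holds, and one checks directly (e.g.\ comparing maxima) that some row has $A(j+1)<B(j)$. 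Alternatively, the ``$\Leftarrow$'' direction of the key lemma can be extracted from the argument behind Lemma~4.1, which turns a failure of the Non-overlapping Condition for the root columns into a legal transposition between pivot entries.
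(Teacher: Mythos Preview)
Your overall strategy---reformulate $n(T)>0$ via GT non-strictness as the existence of adjacent columns $c,c+1$ and a row $j$ with $T(j+1,c)<T(j,c+1)$, observe that the pivot intervals of $\widehat{C}_c\widehat{C}_{c+1}$ depend only on the sets $A_c,A_{c+1}$, and then prove a purely set-theoretic interlacing ``key lemma''---is different from the paper's argument (which works directly with the separating-wall configurations and manipulates the root via transpositions) and is a reasonable route.

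However, your treatment of the unequal-length case $|A_c|=|A_{c+1}|+1$ contains a concrete error. By Definition~4.1 the entries of $S$ are processed from \emph{largest} to smallest, and the first distinct entry encountered is the one placed at the bottom row; the proof of Proposition~4.1 says this explicitly. Hence the $\infty$-interval is $[p_1,\infty]$ with $p_1=\max(A_c\setminus A_{c+1})$, not $[b_{\min},\infty]$, and the remaining pairing is $p_2\leftrightarrow q_1,\ p_3\leftrightarrow q_2,\ldots$ (shifted by one relative to what you wrote). With the correct intervals your dichotomy ``$B\setminus A\ne\emptyset\Rightarrow$ the intervals overlap'' is false: take $A_c=\{1,3,4\}$, $A_{c+1}=\{2,3\}$ (which satisfies $A_c(j)\le A_{c+1}(j)$). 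Here $P=\{4,1\}$, $Q=\{2\}$, the pivot intervals are $[4,\infty]$ and $[1,2]$, which are disjoint; and indeed no $j$ has $A_c(j+1)<A_{c+1}(j)$, even though $B\setminus A=\{2\}\ne\emptyset$. So this branch of your key-lemma argument collapses.

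The gap is local and repairable: once you record the correct intervals $[p_1,\infty],[p_2,q_1],\ldots,[p_{k+1},q_k]$, the same interlacing/counting argument you sketch for $|A|=|B|$ goes through (for instance, set $q_0:=\infty$ and run the argument uniformly, using that $A(1)\le B(1)\le A(2)\le\cdots\le B(|B|)\le A(|B|+1)$ forces $q_0>p_1>q_1>p_2>\cdots$). But as written, the $|A|=|B|+1$ case is wrong and must be redone.
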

\ytableausetup{nosmalltableaux}
\begin{proof}
First assume $n(T)>0$. Locate a boxed, circled configuration in $T$. Suppose the configuration is between columns $i$ and $i+1$. We have the following set up:
\[
\begin{ytableau}
a& \none&\none[|_{j-1}|_{j}] &\none&c \\
b&\none& \none[|_{j}\text{ }|_{j+1}]& \none&d \\
\end{ytableau}.
\]
So $[a,c]\cap [b,d]\ne \emptyset$. We can bring this to a two column HHL configuration by applying transpositions as follows:
\[
\begin{ytableau}
x & a \\
\none & \none \\
a & c \\
b &d
\end{ytableau} \longrightarrow
\begin{ytableau}
a & a \\
\none & \none \\
x & c \\
b &d
\end{ytableau}
\]
Since $x<a$ we have $[x,c]\cap [b,d] \ne \emptyset$. Apply such moves until we have a pivot entry to the left of $c$. Proceed to entry $b$ and apply similar such moves. So we have $x \leq a$ and $y \leq b$ for some $y$ and so $[x,c] \cap [y,d] \ne \emptyset$. If $c$ or $d$ are not distinct entries with respect to the left column then apply transpositions to the left column to match these entries. This places $x$ and $y$ in rows weakly below $c$ and $d$. So we get $[x,w] \cap [y,z] \ne \emptyset$ where $w$ and $z$ are not in the left column and $c \leq w$ and $d \leq z$. 
\newline \indent $[x,w] \cap [y,z] \ne \emptyset$ implies that if $x$ is matched with $w$ and $y$ is matched with $z$ in the root filling then we are done. If not, then there exists a legal transposition between pivot entries which implies the Non-overlapping Condition is not satisfied by Lemma 4.1. So $\widehat{T}$ is not disjoint. 
\newline \indent Now assume $n(T)=0$. Consider any adjacent pair of columns in $T$. Find the highest distinct entry in the left column. We have,
\[
\begin{ytableau}
a & c \\
b & d
\end{ytableau}
\]
where $a$ is distinct and $c \leq b$ since $n(T)=0$. If $c=b$ switch $a$ and $b$. If $c<b$ then $c$ is distinct since if not we have 
\[
\begin{ytableau}
a & c \\
b & d \\
\none \\
c
\end{ytableau}.
\]
However this is a semistandard two column configuration, so we arrive at a contradiction. 
\newline\indent We next show that if $c<b$ then $a$ is matched with $c$ in the root configuration. Since $a$ is the highest distinct entry in the left column then $a$ must have its match weakly above $c$ in the right column. Assume its strictly above. Call this entry $x$. We have,
\[
\begin{ytableau}
\none & x \\
\none\\
a &c \\
b& d
\end{ytableau}.
\]
Consider the entry to the left of $x$ and call it $y$. If $y$ is not distinct, find its match in the right column. This process terminates with finding a distinct entry above $a$ since in the left column since we have a finite number of rows. This gives a contradiction since $a$ is the highest distinct entry.
\newline \indent In both cases we end up matching $a$ with its pair, $x$ to get 
\[
\begin{ytableau}
a & x \\
\none \\
u & v
\end{ytableau}
\]
with $x<u$ since $x$ is distinct with respect to the right column and $n(T)=0$. Furthermore, $[a,x] \cap [u,v] = \emptyset$ for all rows $\ytableaushort{uv}$ below $x$. Go to the next highest distinct entry in the left column and repeat. This shows the Non-overlapping Condition is satisfied for all columns. 
\end{proof}
\begin{prop} Let $T$ be a strict SSYT. Let $\widehat{T}$ be the root. We have, 
\[
(1-t)^{z(T)}(-t)^{l(T)}=(1-t)^{\sum_{i=2}^{(\lambda+\rho)_{1}}p(i)}(-t)^{\sum_{i=2}^{(\lambda+\rho)_{1}}(\check{N}(i)+\hat{N}(i))+n-a_{0}}.
\]
where the left hand side is defined for $T$ and the right hand side is defined for $\widehat{T}$.
\end{prop}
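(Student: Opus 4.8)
The assertion is an equality of monomials in $(1-t)$ and $(-t)$, so it is equivalent to the two scalar identities
$$z(T)=\sum_{i=2}^{(\lambda+\rho)_1}p(i),\qquad l(T)=(n-a_0)+\sum_{i=2}^{(\lambda+\rho)_1}\bigl(\check N(i)+\hat N(i)\bigr),$$
and this is how I would organize the proof. The first reduction is to record what the hypotheses buy us: since $T$ is strict we have $n(T)=0$, so by the preceding proposition the root $\widehat T$ is disjoint, i.e.\ every adjacent pair of columns $\widehat C_{i-1}\widehat C_i$ satisfies the Non-overlapping Condition. Consequently the pivot intervals inside each such pair are pairwise disjoint, every entry of a column lies in at most one pivot interval, and the two right-hand sides become sums of genuinely local (single-column-pair) quantities together with the boundary constant $n-a_0$: concretely $\check N(i)+\hat N(i)$ is exactly the number of entries common to columns $i-1$ and $i$ whose value lies strictly inside some pivot interval of that pair, and $p(i)$ is the number of rows of the pair where the two columns carry different entries — note that the overhang row $c'+1$ is always a pivot but is never counted in $p(i)$, which is the source of the $n-a_0$ bookkeeping below.

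For the first identity I would work through the SSYT$\leftrightarrow$GT dictionary of Section 2.6 together with the construction of $\widehat C_{i-1}$ from $\widehat C_i$ to build a bijection, column-pair by column-pair, between the separating-wall configurations counted by $z(T)$ and the differing rows of adjacent root columns. The key point is that a row of $\widehat C_{i-1}\widehat C_i$ where the two columns differ is precisely a row carrying an entry of column $i-1$ that is absent from column $i$ and that is not pushed into the overhang; translating back, such a row corresponds to a special entry in the pattern of $T$, i.e.\ to exactly one of the configurations recorded by $z(T)$, and one checks that the ``$|_{k+1}$ same row strictly right'' and ``$|'_{k+1}$ directly below strictly left'' conditions fall out of the strict increase of the columns of $T$ together with the greedy placement rule for $\widehat C_{i-1}$. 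Summing over $i$ gives $z(T)=\sum p(i)$. For the second identity the same circle of ideas identifies a common entry of columns $i-1$ and $i$ whose value lies in a pivot interval with one of the configurations counted by $l(T)$ — a wall $|_k$ lying directly above a wall $|_{k+1}$ — that straddles those two columns; the analogous configurations sitting against the leftmost column are not captured this way, and they account for exactly $n-a_0$ of them. I would pin this last term down exactly as in the proof of Theorem 3.1: the first column of $\lambda+\rho$ has $n-1$ entries drawn from $[n]$, the missing value being $a_0$, and appending $a_0$ to the (increasing) first column of $\widehat T$ produces a permutation $w'$ with $\ell(w')=n-a_0$, which is precisely the global $(-t)^{n-a_0}$ factor that was normalized away in passing from Theorem 3.2 to Theorem 4.1. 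Summing gives the $l$-identity, and the two together give the proposition.

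It is worth stressing why one cannot shortcut this by observing that Tokuyama's formula and the compressed HHL formula (Theorem 4.1 fed into Theorem 3.2) both expand $\widetilde W_\lambda$: distinct strict SSYT of shape $\lambda+\rho$ may share a content — already for $n=4$ and shape $(3,2,1)$ there are $42$ strict semistandard tableaux but only $38$ attainable contents — so equality of the two polynomial expansions does not force the weights to agree tableau by tableau, and a direct combinatorial comparison is genuinely needed. The step I expect to be the main obstacle is exactly the position-matching in the two bijections: verifying that the relational conditions ``directly above / directly below / same row / strictly left / strictly right'' among separating walls of consecutive indices in $T$ translate without error into descent and pivot-interval data in $\widehat T$. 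The delicate points are (i) the column lengths of $\lambda+\rho$ decrease by $0$ or $1$ as one moves right, forcing a careful treatment of the overhang row $c'+1$ (always a pivot, never a descent — and it is precisely this asymmetry that makes the $n-a_0$ correction come out right), and (ii) each root column $\widehat C_{i-1}$ is built from $\widehat C_i$ rather than from the sorted column of $T$, so the comparison must be threaded inductively through the recursive definition of the root, with Lemmas 4.6--4.8 and Proposition 4.7 supplying the needed per-column identities on the root side. Once these correspondences are established locally, summing over all column pairs yields the claimed exponent identities.
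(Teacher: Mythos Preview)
Your strategy coincides with the paper's: split into the two exponent identities $z(T)=\sum_i p(i)$ and $l(T)=(n-a_0)+\sum_i(\check N(i)+\hat N(i))$, and prove each by a column-pair-local correspondence between separating-wall configurations in $T$ and pivot data in $\widehat T$, with the $n-a_0$ term coming from the walls at the left boundary. What the paper adds to resolve the ``position-matching'' step you flagged is a concrete structural lemma: under $n(T)=0$, between any two adjacent sorted columns the rows carrying distinct entries organize into blocks of the chain form
\[
\begin{array}{cc} a & b_0\\ b_0 & b_1\\ \vdots & \vdots\\ b_m & c \end{array}
\qquad a<b_0<\cdots<b_m<c,
\]
i.e.\ consecutive rows share an entry (the paper shows any gap would force a boxed--circled pair). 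From this chain form both identities fall out at once: the presence of such a block is exactly a not-boxed--not-circled wall of index $c-1$ between those columns (giving the $z$-identity), and the number $m$ of intermediate $b$'s is the local contribution to $\check N(i)+\hat N(i)$ (giving the $l$-identity). Your concern (ii) about the recursive build of $\widehat C_{i-1}$ from $\widehat C_i$ rather than from $\overrightarrow{C}_i$ turns out to be harmless: the pairing of distinct entries in Definition~4.1 is largest-to-largest and hence determined by the \emph{sets} $C_{i-1},C_i$, so $p(i)$ and the sum $\check N(i)+\hat N(i)$ (though not the summands separately) can be read off the sorted columns of $T$, which is exactly what the paper does.
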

\begin{proof}
Assume $n(T)=0$. We first show $z(T)=\sum_{i=2}^{(\lambda+\rho)_{1}}p(i)$. Suppose we have a separating wall that is not boxed, not circled, shown below:
\[
\begin{ytableau}
\none & \none &\none &\none[i-1]& \none &\none &\none[i]\\
\none & \none &\none &\none[\downarrow]&\none&\none&\none[\downarrow]\\
\none & \none &\none & a& \none[\ldots]&\none[|_{k}]&\text{\tiny{k+1}}&\ldots&\text{\tiny{k+1}}&\none[\text{ }|_{k+1}]\\
\text{\tiny{k+1}}&\none[\text{ } |'_{k+1}]&\none[\ldots] &b
\end{ytableau}.
\]
Since $|'_{k+1}$ is strictly to the left of $|_{k}$ then there is no $k+1$ entry in column $i-1$, since the entry directly below $a$ must be strictly larger than $k+1$, i.e. $b>k+1$. So $k+1$ is in column $i$ and $k+1$ is not in column $i-1$. This implies that there exists a descent in root configuration $\widehat{T}$.
\newline\indent Now suppose we have a descent in $\widehat{T}$, $\ytableaushort{ac}$. In addition assume we have no entries $b$ such that $a<b<c$. So we have
\[
\ytableaushort{ac}
\]
\[
\Bigg\downarrow \text{sort}
\]
\[
\ytableaushort{ac,d}
\]
with $a<d$. We then place separating walls to get,
\[
\ytableaushort{ac,d}\xrightarrow{\text{place separating walls}}\begin{ytableau}\none&a & \none[\ldots]&\none[|_{c-1}]&c&\none[|_{c}]\\ \none[|_{c}]& d\end{ytableau}.
\]
So we get a not boxed, not circled separating wall of index $c-1$. 
\newline\indent Now assume there exists some entries $b_{i}$ such that $a<b_{i}<c$. Since $c$ is not in column $i$ the entry below some $b_{i'}$, call it $x$, gives two cases, when $x>c$ and $x<c$. If $x>c$ then
\[
\begin{ytableau}
\none&\none[i-1]&\none&\none&\none&\none[i]\\
\none&\none[\downarrow]&\none&\none&\none&\none[\downarrow] \\
\none&a&\none&\none&\none&b_{i}\\
\none&\none[\vdots]& \none&\none&\none&\none[\vdots]\\
\none&b_{i'}&\none[|_{b_{i'}}]&\none[\ldots]&\none[|_{c-1}]&c&\none[\text{ } |_{c}]\\
\none[|_{c}\text{ }]& x
\end{ytableau}
\] 
which gives a not boxed, not circled configuration with respect to $|_{c-1}$. If $x<c$ then
\[
\begin{ytableau}
\none&\none[i-1]&\none&\none&\none&\none[i]\\
\none&\none[\downarrow]&\none&\none&\none&\none[\downarrow] \\
\none&a&\none&\none&\none&b_{i}\\
\none&\none[\vdots]& \none&\none&\none&\none[\vdots]\\
\none&b_{i'}&\none[|_{b_{i'}}]&\none[\ldots]&\none[|_{c-1}]&c&\none[\text{ } |_{c}]\\
\none& x&\none[|_{x}]&\none[\ldots]&\none[|_{c}]&d
\end{ytableau}.
\] 
Since $x<c$ then $b_{i'}<x<c$ which implies $b_{i'}<c-1$. It follows that we have $|_{c-2}|_{c-1}$ above $|_{c-1}|_{c}$ which implies that $n(T)>0$, which is a contradiction. 
\newline \indent So we have shown that there is a not boxed, not circled separating wall configuration if and only if there is a descent in the columns surrounding the separating walls. 
\newline\indent Next we show,
 \[
l(T)=\sum_{i=2}^{(\lambda+\rho)_{1}}(\check{N}(i)+\hat{N}(i))+n-a_{0}.
\]
To see this, we first show that the SSYT, $T$ has the following structure:
\[
\begin{ytableau}
a&b_{0}\\
b_{0} &b_{1}\\
\none[\vdots]&\none[\vdots]\\
b_{m} & c
\end{ytableau}
\]
where $a<b_{0}<\ldots<b_{m}<c$. Suppose there exists a row such that we have 
\[
\begin{ytableau}
a & b \\
\circ & \star \\
b & c
\end{ytableau}
\]
which implies $a<\circ<b<\star<c$. It follows that $a<b-1$. After placing separating walls we have,
\[
\begin{ytableau}
a &\none[\ldots] &\none[|_{b-2}]&\none[|_{b-1}]& b\\
\circ & \none[\ldots] &\none[|_{b-1}] & \none[|_{b}] & \star \\
b &\none&\none&\none& c
\end{ytableau}
\]
giving $n(T)>0$ which is a contradiction. So we have the desired structure. After placing separating walls we get, 
\[
\begin{ytableau}
a&\none[\text{\hspace{4mm}}|_{b_{0}-1}]&\none&\none&b_{0}\\
b_{0} &\none[|_{b_{0}}]&\none[\text{\hspace{4mm}}|_{b_{1}-1}]&\none&b_{1}\\
\none[\vdots]&\none&\none&\none&\none[\vdots]\\
\none&\none&\none[\text{\hspace{3mm}}|_{b_{m}-1}]&\none&b_{m}\\
b_{m} &\none&\none[|_{b_{m}}]&\none&c
\end{ytableau}
\]
Since $m=N_{a,c}[C_{i_{>r(a)}}\cap C_{i-1}]+N_{a,c}[C_{i_{<r(a)}}\cap C_{i-1}]$ and the fact that we can extend this argument to any number of such sections gives  
\[
l(\overrightarrow{C}_{i-1}\overrightarrow{C}_{i})=\check{N}(i)+\hat{N}(i).
\]
Therefore,
\[
\sum_{i=2}^{(\lambda+\rho)_{1}}l(\overrightarrow{C}_{i-1}\overrightarrow{C}_{i})=l(T)-n+a_{0}
\]
where the $-n+a_{0}$ comes from not counting the walls placed at the beginning of the rows. The result then follows. 
\end{proof}

\section{Appendix}
In this section we display some examples of the generation algorithm.

\subsection{Generation algorithm examples}
Here we present three examples of the generation algorithm. We assign $t$-coefficients to the leaves of the tree, as well as assign $t$-coefficients to the internal nodes of the tree corresponding to our compression scheme.
\newline\textbf{Example 1:} By factoring out the contribution of $T'$ we simply need to consider a two column configuration. Let \ytableausetup{smalltableaux,centertableaux}$\widehat{C}C'=\ytableaushort{15,33,44,22,68}$. We follow the construction of $\beta$ to get, 
\[
\beta=
\begin{matrix}
(1<6)& \big|& (1<4) &\big| & (1<3) &\big| & (1<2).
\end{matrix}
\]
The generation tree is on a separate page.
\newline
\textbf{Example 2: } Once again we consider only a two column configuration. Let $\widehat{C}C'=\ytableaushort{14,25,36}$. Following the construction of $\beta$ we get, 
\[
\beta=
\begin{matrix}
(2<3) & (1<3) & \big| & (1<2)
\end{matrix}
\]
In this example we can apply $(1<3)(2<3)$ to $C$ to get $\ytableaushort{34,15,26}$, however these two transpositions are in the same row. As a result we do not generate each HHL filling uniquely. We can generate $\ytableaushort{34,15,26}$ by applying $(1<2)(1<3)$ to $C$. 
The generation tree is on a separate page. 
\newline
\textbf{Example 3 } In this example we start with a SSYT $T=\ytableaushort{122334,2344,345,45}$ and generate the entire preimage of the sort map according to the construction given in Section 5.3.


\newpage
\textbf{Generation Tree for Example 1: }
\[
\ytableausetup{smalltableaux}
\begin{tikzpicture}[node distance=0cm]
\node(3)[yshift=-7cm]{$\begin{matrix}\ytableaushort{15,33,44,22,68}\\ -t^{3}(1-t)^{2}\end{matrix}$};
\node(4)[yshift=-10.5cm]{$\begin{matrix}\ytableaushort{15,33,44,22,68}\\ -t^{3}(1-t)^{2}\end{matrix}$};
\node(5)[yshift=-14cm,xshift=3cm]{$\begin{matrix}\ytableaushort{15,33,44,22,68}\\ -t^{2}(1-t)^{2}\end{matrix}$};
\node(6)[yshift=-14cm,xshift=-3cm]{$\begin{matrix}\ytableaushort{45,33,14,22,68}\\ t^{2}(1-t)^{3}\end{matrix}$};
\node(7)[yshift=-17.5cm,xshift=5cm]{$\begin{matrix}\ytableaushort{15,33,44,22,68}\\ -t(1-t)^{2}\end{matrix}$};
\node(8)[yshift=-17.5cm,xshift=1cm]{$\begin{matrix}\ytableaushort{35,13,44,22,68}\\ t(1-t)^{3}\end{matrix}$};
\node(9)[yshift=-17.5cm,xshift=-3cm]{$\begin{matrix}\ytableaushort{45,33,14,22,68}\\ t^{2}(1-t)^{3}\end{matrix}$};
\node(10)[yshift=-21cm,xshift=6cm]{$\begin{matrix}\ytableaushort{15,33,44,22,68}\\ -(1-t)^{2}\end{matrix}$};
\node(11)[yshift=-21cm,xshift=4cm]{$\begin{matrix}\ytableaushort{25,33,44,12,68}\\ (1-t)^{3}\end{matrix}$};
\node(12)[yshift=-21cm,xshift=2cm]{$\begin{matrix}\ytableaushort{35,13,44,22,68}\\ (1-t)^{3}\end{matrix}$};
\node(13)[yshift=-21cm]{$\begin{matrix}\ytableaushort{35,23,44,12,68}\\ -(1-t)^{4}\end{matrix}$};
\node(14)[yshift=-21cm,xshift=-2cm]{$\begin{matrix}\ytableaushort{45,33,14,22,68}\\ t(1-t)^{3}\end{matrix}$};
\node(15)[yshift=-21cm,xshift=-4cm]{$\begin{matrix}\ytableaushort{45,33,24,12,68}\\ -t(1-t)^{4}\end{matrix}$};
\draw(3)--(4)node[midway, anchor=west]{$\overline{(1<6)}$};
\draw(4)--(5)node[midway, anchor=west]{$\overline{(1<4)}$};
\draw(4)--(6)node[midway, anchor=east]{$(1<4)$};
\draw(5)--(7)node[midway, anchor=west]{$\overline{(1<3)}$};
\draw(5)--(8)node[midway, anchor=east]{$(1<3)$};
\draw(6)--(9)node[midway, anchor=west]{$\overline{(1<3)}$};
\draw(7)--(10)node[midway, anchor=west]{$\overline{(1<2)}$};
\draw(7)--(11)node[midway, anchor=east]{$(1<2)$};
\draw(8)--(12)node[midway, anchor=west]{$\overline{(1<2)}$};
\draw(8)--(13)node[midway, anchor=east]{$(1<2)$};
\draw(9)--(14)node[midway, anchor=west]{$\overline{(1<2)}$};
\draw(9)--(15)node[midway, anchor=east]{$(1<2)$};

\end{tikzpicture}
\]
For the node below the edge labelled $\overline{(1<6)}$ we have 
\[
\begin{matrix}
(p<q)=(1<6)&\text{inv}(CC')=0 & \text{des}(CC')=2 \\ & m(CC',6)=N_{1,5}[C'_{>1}\cap C]+N_{6,6}[C'_{>5}\cap C]=3+0=3.
\end{matrix}
\]
For the node below the edge labelled $(1<4)$ we have 
\[
\begin{matrix}
(p<q)=(1<4)&\text{inv}(CC')=1 & \text{des}(CC')=3 \\ &m(CC',4)=N_{1,4}[C'_{>3}\cap C]+N_{6,4}[C'_{>5}\cap C]=1+0=1.
\end{matrix}
\]
For the node below the edge labelled $\overline{(1<3)}$ attached to the filling $CC'=\ytableaushort{15,33,44,22,68}$ we have 
\[
\begin{matrix}
(p<q)=(1<3)&\text{inv}(CC')=0 & \text{des}(CC')=2 \\ &m(CC',3)=N_{1,3}[C'_{>1}\cap C]+N_{6,3}[C'_{>6}\cap C]=1+0=1.
\end{matrix}
\]
\textbf{Generation Tree for Example 2: }
\[
\begin{tikzpicture}[node distance=0cm]

\node(1)[]{$\begin{matrix}\ytableaushort{14,25,36}\\ 0\end{matrix}$};
\node(2)[yshift=-3.5cm,xshift=3cm]{$\begin{matrix}\ytableaushort{14,25,36}\\ 0\end{matrix}$};
\node(3)[yshift=-3.5cm,xshift=-3cm]{$\begin{matrix}\ytableaushort{14,35,26}\\ 0\end{matrix}$};
\node(4)[yshift=-7cm,xshift=-3cm]{$\begin{matrix}\ytableaushort{14,35,26}\\ 0\end{matrix}$};
\node(5)[yshift=-7cm,xshift=5cm]{$\begin{matrix}\ytableaushort{14,25,36}\\ 0\end{matrix}$};
\node(6)[yshift=-7cm,xshift=1cm]{$\begin{matrix}\ytableaushort{34,25,16}\\ 0\end{matrix}$};
\node(7)[yshift=-10.5cm,xshift=-2cm]{$\begin{matrix}\ytableaushort{14,35,26}\\ -t^{3}(1-t)^{3}\end{matrix}$};
\node(8)[yshift=-10.5cm,xshift=-4cm]{$\begin{matrix}\ytableaushort{24,35,16}\\t^{3}(1-t)^{3} \end{matrix}$};
\node(9)[yshift=-10.5cm,xshift=2cm]{$\begin{matrix}\ytableaushort{34,25,16}\\ -t^{3}(1-t)^{3}\end{matrix}$};
\node(10)[yshift=-10.5cm]{$\begin{matrix}\ytableaushort{34,15,26}\\t^{3}(1-t)^{3} \end{matrix}$};
\node(11)[yshift=-10.5cm,xshift=4cm]{$\begin{matrix}\ytableaushort{24,15,36}\\ -t^{3}(1-t)^{3}\end{matrix}$};
\node(12)[yshift=-10.5cm,xshift=6cm]{$\begin{matrix}\ytableaushort{14,25,36}\\t^{3}(1-t)^{3} \end{matrix}$};
\draw (1)--(2)node[midway, anchor=west]{$\overline{(2<3)}$};
\draw (1)--(3)node[midway, anchor=east]{$(2<3)$};
\draw (3)--(4)node[midway, anchor=east]{$\overline{(1<3)}$};
\draw (2)--(5)node[midway, anchor=west]{$\overline{(1<3)}$};
\draw (2)--(6)node[midway, anchor=east]{$(1<3)$};
\draw (4)--(7)node[midway, anchor=west]{$\overline{(1<2)}$};
\draw (4)--(8)node[midway, anchor=east]{$(1<2)$};
\draw (6)--(9)node[midway, anchor=west]{$\overline{(1<2)}$};
\draw (6)--(10)node[midway, anchor=east]{$(1<2)$};
\draw (5)--(11)node[midway, anchor=east]{$(1<2)$};
\draw (5)--(12)node[midway, anchor=west]{$\overline{(1<2)}$};
\end{tikzpicture}
\]
\newpage
\textbf{Generation Tree for Example 3: }
\[
\begin{tikzpicture}[node distance=0cm]
\node(1)[]{$\begin{matrix}\ytableaushort{4}\end{matrix}$};
\node(2)[yshift=-2cm]{$\begin{matrix}\ytableaushort{34}\end{matrix}$};
\node(3)[yshift=-4cm]{$\begin{matrix}\ytableaushort{334,4}\end{matrix}$};
\node(4)[yshift=-6cm]{$\begin{matrix}\ytableaushort{3334,24,5}\end{matrix}$};
\node(5)[yshift=-8.5cm]{$\begin{matrix}\ytableaushort{33334,224,55,4}\end{matrix}$};
\node(6)[yshift=-11cm]{$\begin{matrix}\ytableaushort{333334,2224,155,4}\\ 0\end{matrix}$};
\node(7)[xshift=3cm,yshift=-14cm]{$\begin{matrix}\ytableaushort{333334,2224,455,1}\\t^{4}(1-t)^{3}\end{matrix}$};
\node(8)[xshift=-3cm,yshift=-14cm]{$\begin{matrix}\ytableaushort{333334,2224,155,4}\\-t^{4}(1-t)^{3}\end{matrix}$};
\draw(1)--(2)node[]{};
\draw(2)--(3)node[]{};
\draw(3)--(4)node[]{};
\draw(4)--(5)node[]{};
\draw(5)--(6)node[]{};
\draw(6)--(7)node[midway, anchor=west]{$(1,4)$};
\draw(6)--(8)node[midway, anchor=east]{$\overline{(1,4)}$};
\end{tikzpicture}
\]

\bibliographystyle{alpha}

\newcommand{\etalchar}[1]{$^{#1}$}

\end{document}